\newtheorem{theorem}{Theorem}[section]
\newtheorem{proposition}[theorem]{Proposition}
\newtheorem{corollary}[theorem]{Corollary}
\newtheorem{lemma}[theorem]{Lemma}
\newtheorem{hypothesis}[theorem]{Hypothesis}
\newtheorem{definition}[theorem]{Definition}
\newtheorem{remark}[theorem]{Remark}
\numberwithin{equation}{section}
\def\Re{\mathop\mathrm{Re}\nolimits}		
\def\Im{\mathop\mathrm{Im}\nolimits}		
\def\Rg{\mathop\mathrm{Rg}\nolimits}		
\def\Ns{\mathop\mathrm{N}\nolimits}			
\def\graph{\mathop\mathrm{graph}\nolimits}		
\def\errfn{\mathop\mathrm{errfn}\nolimits}		
\newcommand{\rmO}{\mathrm{O}}		
\newcommand{\rmd}{\mathrm{d}}		
\newcommand{\rme}{\mathrm{e}}		
\newcommand{\rmi}{\mathrm{i}}		
\begin{document}

\title{Nonlinear stability of source defects in oscillatory media}
\author{Margaret Beck\footnotemark[1] \and Toan T. Nguyen\footnotemark[2] \and Bj\"orn Sandstede\footnotemark[3] \and Kevin Zumbrun\footnotemark[4]}

\maketitle

\renewcommand{\thefootnote}{\fnsymbol{footnote}}

\footnotetext[1]{Department of Mathematics and Statistics, 
Boston University, Boston, MA, 02215. Email: mabeck@bu.edu}

\footnotetext[2]{Department of Mathematics, Penn State University, State College, PA 16803. Email: nguyen@math.psu.edu}

\footnotetext[3]{Division of Applied Mathematics, Brown University, Providence, RI 02912. 
 Email: bjorn$\underline{~}$sandstede@brown.edu}

\footnotetext[4]{Department of Mathematics, Indiana University,  
Bloomington, IN 47405. Email: kzumbrun@indiana.edu}

\begin{abstract}
In this paper, we prove the nonlinear stability under localized perturbations of spectrally stable time-periodic source defects of reaction-diffusion systems. Consisting of a core that emits periodic wave trains to each side, source defects are important as organizing centers of more complicated flows. Our analysis uses spatial dynamics combined with an instantaneous phase-tracking technique to obtain detailed pointwise estimates describing perturbations to lowest order as a phase-shift radiating outward at a linear rate plus a pair of localized approximately Gaussian excitations along the phase-shift 
boundaries;  we show that in the wake of these outgoing waves the perturbed solution converges time-exponentially to a space-time translate of the original source pattern.
\end{abstract}

\begin{center}
\textbf{Keywords}: source defect, nonlinear stability, spatial dynamics, Green's function
\end{center}



\newpage 

\section{Introduction}

We are interested in the stability properties of interfaces between stable spatially periodic structures with possibly different wavenumbers: we refer to such interfaces as defects and to the asymptotic spatially periodic travelling waves as wave trains; see Figure~\ref{f:1} for an illustration. Typically, wave trains and defects will depend on time, and we are particularly interested in structures that are periodic in time, possibly after transforming into a comoving reference frame. Defect solutions arise in many biological, chemical, and physical processes: examples are planar spiral waves \cite{swinney,nettesheim}, flip-flops in chemical reactions \cite{perraud}, and surface waves in hydrothermal fluid flows \cite{pastur}.

\begin{figure}[b]
\centering\includegraphics[scale=1]{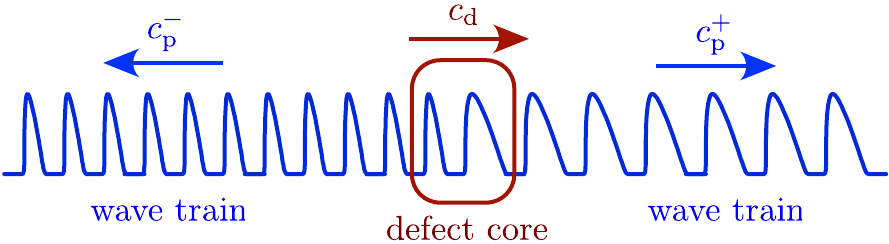} 
\caption{A defect moving with speed $c_\mathrm{d}$ that connects two spatially periodic travelling waves, referred to as wave trains, with phase speeds $c_\mathrm{p}^\pm$ at $x=\pm\infty$, respectively.}
\label{f:1}
\end{figure}

\subsection*{Wave trains and defects}

Before we discuss the specific goals of this paper in more detail, we make the notion of defects and wave trains more precise. We consider reaction-diffusion systems of the form
\begin{equation}\label{eqn:rd}
u_t = Du_{xx} + f(u),
\quad
\hbox{\rm $(x,t)\in\mathbb{R}\times\mathbb{R}^+$, \quad $u\in \mathbb{R}^n$},
\end{equation}
where $D \in \mathbb{R}^{n\times n}$ is an invertible, diagonal diffusion matrix. Systems of this form often exhibit one-parameter families of spatially-periodic travelling waves that are parametrized by their spatial wavenumber $k$: thus, we assume that there are wave-train solutions of the form
\[
u(x,t) = u_\mathrm{wt}(kx-\omega_\mathrm{nl}(k) t;k)
\]
for $k$ in an open, nonempty interval, where the profile $u_\mathrm{wt}(\theta;k)$ is $2\pi$-periodic in $\theta$. Here, $\omega=\omega_\mathrm{nl}(k)$ denotes the temporal frequency, which will be a function of the wavenumber $k$, and it is typically referred to as the nonlinear dispersion relation. Wave trains therefore propagate with the phase velocity $c_\mathrm{p}=\omega_\mathrm{nl}(k)/k$. An interesting quantity associated with a wave train is its group velocity $c_\mathrm{g}$, which is defined by the derivative of the nonlinear dispersion relation:
\[
c_\mathrm{g} = \frac{\rmd\omega_\mathrm{nl}(k)}{\rmd k}.
\]
The group velocity turns out to be the speed with which small localized perturbations of a wave train propagate along the wave train as a function of time $t$; see \cite{DSSS,SSSU,JNRZ2} for further discussion and proofs.

We now turn to the definition of defect solutions. Following \cite{Sa,SH,Hecke,SS04a}, a defect is a solution of (\ref{eqn:rd}) of the form
\[
u(x,t) = \bar{u}(x-c_\mathrm{d}t,t),
\]
where the defect profile $\bar{u}(\xi,t)$ is assumed to be periodic in $t$ and, for appropriate wavenumbers $k_\pm$, we have that
\[
\bar{u}(\xi,t) \to u_\mathrm{wt}\left(k_\pm\xi-(\omega_\mathrm{nl}(k_\pm)-c_\mathrm{d}k_\pm)t;k_\pm\right) \mbox{ as } \xi\to\pm\infty
\]
uniformly in $t$ so that the defect converges to (possibly different) wave trains in the far field, that is, as $\xi\to\pm\infty$. We remark that periodicity of the defect profile in time implies that $\omega_\mathrm{nl}(k_\pm)-c_\mathrm{d}k_\pm = \omega_\mathrm{d}$, with $\frac{2\pi}{\omega_\mathrm{d}}$ being the time periodicity of the defect, and hence the defect velocity $c_\mathrm{d}$ is determined by the Rankine--Hugoniot condition
\[
c_\mathrm{d} = \frac{\omega_\mathrm{nl}(k_+)-\omega_\mathrm{nl}(k_-)}{k_+-k_-} . 
\]
The properties of these coherent structures have been analyzed in detail in \cite{SS04a}, matching experimental observations \cite{perraud, Yon}. In particular, depending on the group velocities $c_\mathrm{g}(k_\pm)$ of the asymptotic wave trains, defects can be classified into distinct types \cite{SH,Hecke,SS04a} that have different multiplicity, robustness, and stability properties. Following \cite{Sa,SH,Hecke,SS04a}, we distinguish

\qquad
\begin{tabular}{llcl}
sinks: & exist for arbitrary $k_\pm$ & when &                  $c_\mathrm{g}(k_-)>c_\mathrm{d}>c_\mathrm{g}(k_+)$ \\
transmission defects: & exist for arbitrary $k_+=k_-$ & when & $c_\mathrm{g}(k_\pm)>c_\mathrm{d}$ or
                                                               $c_\mathrm{g}(k_\pm)<c_\mathrm{d}$ \\
contact defects: & exist for arbitrary $k_+=k_-$ & when &      $c_\mathrm{g}(k_\pm)=c_\mathrm{d}$ \\
sources: & exist for unique $k_\pm$ & when &                   $c_\mathrm{g}(k_-)<c_\mathrm{d}<c_\mathrm{g}(k_+)$.
\end{tabular}

As discussed in \cite{SS04a,Sa,BNSZ2}, sinks can be thought of as passive interfaces that accommodate two colliding wave trains. Similarly, transmission and contact defects accommodate phase-shift dislocations within wave-train solutions. Source defects, on the other hand, occur only for discrete wavenumbers $k_\pm$ and therefore select the wavenumbers of the wave trains that emerge from the defect core into the surrounding medium: hence, they may be thought of as organizing the surrounding global dynamics, rather than the reverse. In their comprehensive review on general pattern formation phenomena, Cross and Hohenberg \cite[pp. 855-857]{CH} emphasize the importance of sources as pattern selection mechanisms. In this sense, source defects are of particular interest from a dynamical point of view.

Yet, among the main non-characteristic varieties (sinks, transmission defects, and sources), source defects are the only type whose stability properties are not understood mathematically. Generic aspects of spectral stability of all types have been examined in \cite{SS04a,SS04b}. Moreover, it has been shown that spectral stability implies nonlinear stability for sinks in \cite{SS04a} and transmission defects in \cite{GSU}, settling the question of linear and nonlinear stability in these cases. However, both of these analyses utilize weighted-norm techniques that do not seem to apply to source defects. Thus, establishing nonlinear stability in this phenomenologically important case is an important open problem, and resolving this question is the purpose of the present work.

We note that, in our previous recent works \cite{BNSZ1,BNSZ2}, we considered two simpler scenarios that allowed us to gain some insight into the difficulties that we anticipated to see in the general case of source defects in reaction-diffusion systems: these simpler cases consisted of a modified Burgers equations and the cubic-quintic complex Ginzburg--Landau equation. We will discuss these cases and the differences from the general case studied here in more detail below.

\subsection*{Stability of sources}

We begin with a heuristic discussion of the anticipated stability properties of sources. As outlined above, sources are characterized by the feature that the group velocities of the asymptotic wave trains point away from the core of the defect. Thus, localized perturbations added in the far field will not affect the defect as they will propagate away from the defect core and decay to zero if the asymptotic wave trains are stable. If the defect core is subjected to a localized perturbation, we expect that it may change its position and adjust the phase of the wave trains it emits. The result is that the defect core will emit wave trains with a different phase from a new position. Thus, we expect that a localized perturbation of the core will lead to a phase front that propagates with the group velocity of the asymptotic wave trains into the far field to accommodate the difference between the phases of the wave trains before and after the localized perturbation was added.

We now state our hypotheses and results more precisely and refer to \S\ref{s:prelim} for further details. We focus on a given source and first transform the reaction-diffusion system (\ref{eqn:rd}) into a spatial coordinate system that moves with the velocity $c_\mathrm{d}$ of the source to get
\begin{equation}\label{e:1}
u_t = Du_{xx} + c_\mathrm{d} u_x + f(u),
\end{equation}
where we use the same variable $x$ to denote the new comoving spatial coordinate. In the comoving frame, the source defect is then given by $u(x,t)=\bar{u}(x,t)$, where the profile $\bar{u}(x,t)$ is periodic in $t$, and the asymptotic wave trains are given by
\[
u(x,t) = u_\mathrm{wt}(k_\pm x-(\omega_\mathrm{nl}(k_\pm)-c_\mathrm{d}k_\pm)t;k_\pm)
\]
with group velocities
\[
c_\pm := c_\mathrm{g}(k_\pm)-c_\mathrm{d}.
\]
Next, we assume that the wave trains are spectrally stable: more precisely, we assume that the Floquet-Bloch spectrum of the linearization
\[
v_t = Dv_{xx} + c_\mathrm{d} v_x + f_u(u_\mathrm{wt}(k_\pm x-(\omega_\mathrm{nl}(k_\pm)-c_\mathrm{d}k_\pm)t;k_\pm)) v
\]
of (\ref{e:1}) about the wave trains in the closed left half-plane consists of the curve
\begin{equation}\label{spectrum}
\lambda_\pm(\gamma) = - \rmi c_\pm \gamma - \mathrm{d}_\pm \gamma^2 + \rmO(|\gamma|^3) \qquad \mbox{for } \gamma\approx0,
\end{equation}
where the coefficients $\mathrm{d}_\pm>0$ are assumed to be positive so that $\Re\lambda_\pm(\gamma)\leq0$ for all $\gamma$ close to zero, as well as other curves that lie to the left of the line $\Re\lambda=-\delta$ for some $\delta>0$. Finally, we assume spectral stability of the source defect: setting
\[
L^2_a(\mathbb{R}) := \Big\{ u: \rme^{-a|x|} u(x) \in L^2(\mathbb{R}) \Big\},
\]
we assume that the Floquet-Bloch spectrum of the linearization
\begin{equation}\label{lin-intro}
v_t = Dv_{xx} + c_\mathrm{d}v_x+ f_u(\bar{u}(x, t))v
\end{equation}
about the source posed on the space $L^2_a(\mathbb{R})$ for a sufficiently small $a>0$ lies in the open left half-plane with the exception of two eigenvalues, counted with multiplicity, at the origin that correspond to the eigenfunctions $\bar{u}_x(x,t)$ and $\bar{u}_t(x,t)$.

\begin{figure}
\centering
\includegraphics[scale=0.5]{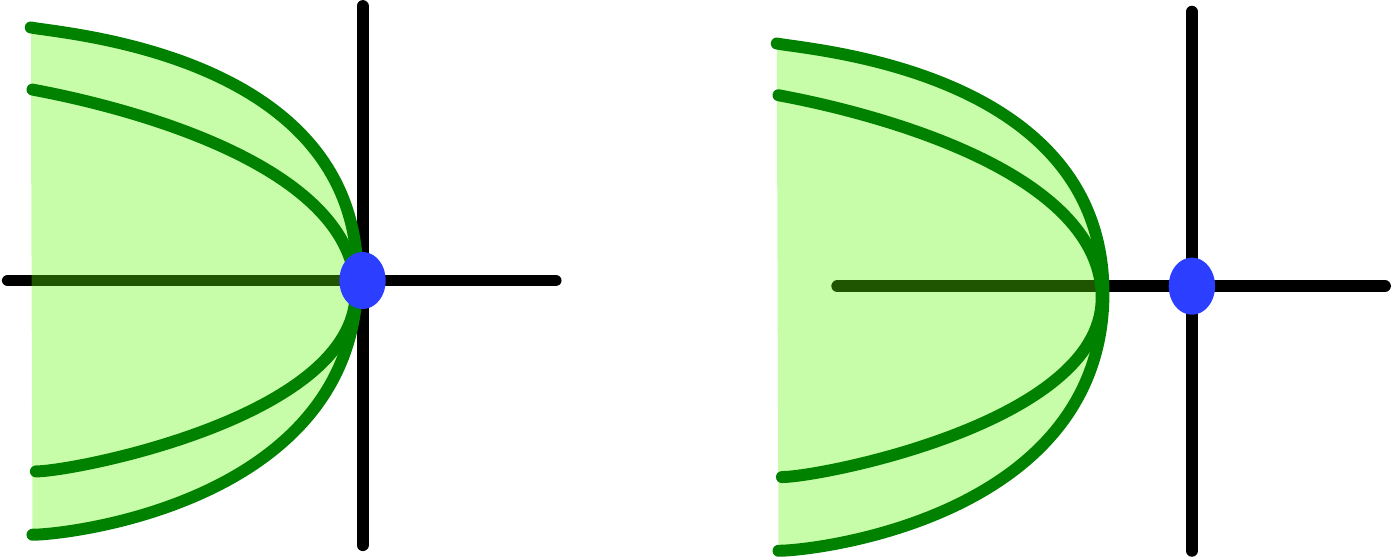}
\put(-140,70){$\mathbb{C}$}
\put(-20,70){$\mathbb{C}$}
\caption{Illustrated is the Floquet spectrum of the linearization (\ref{lin-intro}) on $L^2(\mathbb{R})$ (left) and $L^2_a(\mathbb{R})$ (right).}
\label{fig-spectrumdf}
\end{figure}

\subsection*{Main result}

Before we can state out main result, we need to introduce additional notation. Given the constants $c_\pm$ and $\mathrm{d}_\pm$ that we introduced above in \eqref{spectrum}, and a constant $M_0>0$ that will be determined later, we define the error-function plateau
\begin{equation}\label{def-epm}
e(x,t) := \errfn\left(\frac{x-c_+ t}{\sqrt{4\mathrm{\mathrm{d}_+}t}}\right)
	-  \errfn\left(\frac{x - c_- t}{\sqrt{4\mathrm{\mathrm{d}_-}t}}\right), \qquad
\errfn(z):= \frac{1}{2\pi} \int_{-\infty}^{z} \rme^{-x^2} dx,
\end{equation}
and the radiating Gaussian profile
\begin{equation}\label{Gaussian-like}
\theta(x,t) := \sum_\pm \frac{1}{(1+t)^{\frac12}} \rme^{-\frac{(x- c_\pm t)^2}{M_0(1+t)}}.
\end{equation}
The main result of this paper is then as follows.

\begin{theorem}\label{main}
If $f$ is of class $C^3$, the wave trains satisfy Hypothesis~\ref{h:wt}, and $\bar{u}(x,t)$ is a standing source defect of (\ref{e:1}) that satisfies Hypotheses~\ref{h:so} and~\ref{h:ss}, then there are constants $\varepsilon_0,b,C_0,C_1,M_0>0$ with the following properties: assume that $u(x,0)$ satisfies $\|\rme^{x^2/C_0} (u(x,0)-\bar{u}(x,0))\|_{C^2}=:\varepsilon\leq\varepsilon_0$, then the solution $u(x,t)$ of \eqref{e:1} with initial condition $u(x,0)$ exists globally in time, and there are functions $\varphi(x,t)$ and $\psi(x,t)$ such that
\begin{eqnarray}\label{e:profile_estimates}
& |u(x+\psi(x,t),t+\varphi(x,t)) - \bar{u}(x,t)| \leq \varepsilon C_1 \theta(x,t), &
\label{err_profile} \\ \label{err_phase}
& |\psi(x,t)| + |\varphi(x,t)| \leq \varepsilon C_1, \qquad
|\mathcal{D}_{x,t} \varphi(x,t)| + |\mathcal{D}_{x,t} \psi(x,t)| \leq \varepsilon C_1 \theta(x,t) &
\end{eqnarray}
for $t\geq0$, where $\mathcal{D}_{x,t}$ denotes $(\partial_x,\partial_t)$. Furthermore, there are smooth functions $\delta_\varphi(t),\delta_\psi(t)$ and constants $\delta_\varphi^\infty, \delta_\psi^\infty$ such that, for $t>0$,
\begin{eqnarray*}
& |\delta_\varphi^\infty| + |\delta_\psi^\infty| \leq \varepsilon C_1, \qquad
|\delta_\varphi(t) - \delta_\varphi^\infty| + |\delta_\psi(t) - \delta_\psi^\infty| \leq \varepsilon C_1 \rme^{-bt}, & \\
& |\varphi(x,t) - e(x,t) \delta_\varphi(t)| + |\psi(x,t) - e(x,t)\delta_\psi(t)| \leq \varepsilon C_1 (1+t)^{\frac12} \theta(x,t) .&
\end{eqnarray*}
\end{theorem}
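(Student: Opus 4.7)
My approach is to combine the instantaneous phase-tracking technique (as in \cite{BNSZ1,BNSZ2}) with a pointwise Green's function analysis of the linearization (\ref{lin-intro}). First, rather than writing $u(x,t) = \bar{u}(x,t) + v(x,t)$, I set
\[
v(x,t) := u(x+\psi(x,t),\, t+\varphi(x,t)) - \bar{u}(x,t),
\]
so that $v$ is a residual \emph{after} modulation by a spatial phase $\psi$ and a temporal phase $\varphi$ that are free parameters to be chosen instantaneously. Differentiating this identity in $t$ and $x$, and substituting (\ref{e:1}), one obtains, after some bookkeeping,
\[
v_t = L(t) v + \bar{u}_x(x,t) \bigl(\psi_t - c_\mathrm{d}\psi_x - D\psi_{xx}\bigr) + \bar{u}_t(x,t)\bigl(\varphi_t - c_\mathrm{d}\varphi_x - D\varphi_{xx}\bigr) + \mathcal{N},
\]
where $L(t) = D\partial_x^2 + c_\mathrm{d}\partial_x + f_u(\bar{u}(x,t))$ is the time-periodic linearization, the explicitly displayed terms lie in the generalized kernel of $L(t)$ spanned by the neutral eigenfunctions $\bar{u}_x,\bar{u}_t$, and $\mathcal{N}$ collects quadratic and higher terms in $(v,\nabla\psi,\nabla\varphi)$. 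The role of $(\psi,\varphi)$ is then to remove the secular contribution of the two neutral eigenvalues at $\lambda=0$.

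Second, I develop pointwise bounds on the Green's function $G(x,t;y,s)$ of $\partial_t - L(t)$. Floquet-Bloch decomposition together with spatial dynamics identifies two critical modes whose Floquet exponents touch $0$; Hypothesis~\ref{h:ss} places the rest of the spectrum in $\{\Re\lambda\leq -\delta\}$. Using the $L^2_a$ spectral assumption and a Gap-Lemma style construction, I decompose
\[
G(x,t;y,s) = G^\sharp(x,t;y,s) + \sum_{j=1}^{2} e_j(x,t;y,s)\, \phi_j(y,s)^* + \tilde{G}(x,t;y,s),
\]
where the $e_j$ encode the outward-propagating error-function plateaus from the two neutral modes along the characteristic rays $x = c_\pm t$ (giving the $e(x,t)$ structure in (\ref{def-epm})), $G^\sharp$ contains the two Gaussian tails accompanying those plateaus bounded by $\theta(x,t)$ of (\ref{Gaussian-like}), and $\tilde{G}$ decays exponentially in $t$. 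This pointwise decomposition is the technical heart and uses no exponential weight on the perturbation itself (a weight would be destroyed by the outgoing group velocities $c_-<0<c_+$ of sources).

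Third, I \emph{choose} $(\psi,\varphi)$ by projecting the Duhamel representation against the adjoint neutral modes $\phi_j^*$: schematically,
\[
\begin{pmatrix} \psi(x,t) \\ \varphi(x,t) \end{pmatrix}
:= -\int_0^t \!\!\int_\mathbb{R} e(x-y,t-s)\,\phi_\cdot(y,s)^*\bigl[\mathcal{N}(s) + \text{init}\bigr]\, \rmd y\,\rmd s
\;+\; \text{initial correction from } v(\cdot,0),
\]
so that, after substituting back, the $v$-equation becomes
\[
v(x,t) = \int_\mathbb{R} G^\sharp(x,t;y,0)\,v_0(y)\,\rmd y + \int_0^t\!\!\int_\mathbb{R} G^\sharp(x,t;y,s)\,\bigl(\mathcal{N} + \text{commutator terms}\bigr)\,\rmd y\,\rmd s,
\]
containing only the Gaussian piece of $G$. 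A nonlinear iteration in the template norm
\[
\|v\|_* := \sup_{x,t}\frac{|v(x,t)|}{\theta(x,t)}, \qquad \|\psi\|_{**} := \sup |\psi| + \sup \frac{|\nabla_{x,t}\psi|}{\theta(x,t)},
\]
together with the Gaussian decay of $v_0$ hypothesized via $\|\rme^{x^2/C_0}(u(\cdot,0)-\bar{u}(\cdot,0))\|_{C^2}\leq\varepsilon_0$, closes provided $\varepsilon_0$ is sufficiently small. This produces (\ref{e:profile_estimates}) and the bounds on $|\psi|+|\varphi|$ and their derivatives.

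Finally, to extract $\delta_\varphi(t),\delta_\psi(t)$ and the decomposition $\psi \approx e(x,t)\,\delta_\psi(t)$, I observe that only the behavior of the source at spatial infinity in $y$ contributes to the plateau part of $e(x-y,t-s)$; since both $\mathcal{N}$ and the commutator source terms are controlled pointwise by $\theta$, hence decay like $(1+s)^{-1/2}$ times an outgoing Gaussian, the time integrals
\[
\delta_\varphi(t) := -\int_0^t\!\!\int_\mathbb{R}\phi_\varphi^*(y,s)\bigl(\mathcal{N}+\text{comm}\bigr)\rmd y\,\rmd s + \text{init},
\quad \delta_\psi(t) \text{ analogously},
\]
converge exponentially to limits $\delta_\varphi^\infty,\delta_\psi^\infty$, using the exponential-in-$t$ decay of $\tilde{G}$ together with the localization of the source in $y$. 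The gap $\psi(x,t)-e(x,t)\delta_\psi(t)$ is then bounded by $(1+t)^{1/2}\theta$ through the estimate on the difference $e(x-y,t-s)-e(x,t)$ on the support of the source. \textbf{The main obstacle} I anticipate is the construction and pointwise control of $G$ in the absence of any spectral gap on $L^2$: the Gap-Lemma matching of the two neutral modes across the core, and the verification that their combination yields precisely the double error-function profile $e(x,t)$ with the correct coefficients $c_\pm,\mathrm{d}_\pm$ from (\ref{spectrum}), is delicate because $L(t)$ is time-periodic and the asymptotic operators at $x=\pm\infty$ differ (distinct wavenumbers $k_\pm$). Everything else reduces to standard nonlinear iteration once these Green's function estimates are in hand.
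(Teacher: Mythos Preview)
Your overall architecture matches the paper's: the same phase-tracking ansatz $v=u(x+\psi,t+\varphi)-\bar u$, a pointwise decomposition of the Green's function into error-function plateau modes plus a remainder, a choice of $(\psi,\varphi)$ that removes the plateau modes from the $v$-equation, and a nonlinear iteration in a $\theta$-weighted norm. The extraction of $\delta_\psi(t),\delta_\varphi(t)$ at the end is also essentially what the paper does.

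There is, however, a genuine gap in the Green's function step that would prevent your iteration from closing. You write $G=G^\sharp+\sum_j e_j\phi_j^*+\tilde G$ with $G^\sharp$ ``bounded by $\theta(x,t)$'' and $\tilde G$ exponentially decaying, and then reduce the $v$-equation to convolution against $G^\sharp$ alone. But a remainder bounded merely by $\theta(x-y,t-s)$ is \emph{not} enough: the convolution
\[
\theta(x,t)^{-1}\int_0^t\!\!\int_{\mathbb R}\theta(x-y,t-s)\,\theta^2(y,s)\,\rmd y\,\rmd s
\]
grows like $(1+t)^{1/2}$, so the map $b\mapsto \mathcal{T}b$ would not take the ball $\{\|b\|_*\le r\}$ into itself. (This is exactly the obstruction behind finite-time blow-up for $u_t=u_{xx}+u^2$.) The paper's key advance over the Ginzburg--Landau analysis is that the remainder $G_R$ behaves like a \emph{differentiated} Gaussian,
\[
|G_R(x,t;y,s)|\;\lesssim\;\bigl((t-s)^{-1/2}+\rme^{-\eta|y|}\bigr)\,\theta(x-y,t-s),
\]
and it is precisely this extra factor that makes $\int_0^t\int|G_R|\theta^2\,\rmd y\,\rmd s\lesssim\theta$ (Lemma~\ref{lem-conv}). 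Obtaining it requires showing, at the level of the resolvent kernel, that every term constant in $\lambda$ (which would inverse-Laplace to a plain Gaussian) is multiplied either by $\bar u_x,\bar u_t$ (and hence absorbed into $E_1,E_2$ and removed by the choice of $\psi,\varphi$) or by an exponentially localized factor $\rme^{-\eta|y|}$; only terms that are $\rmO(|\lambda|)$ survive into $G_R$, and these yield the extra $(t-s)^{-1/2}$. This bookkeeping (Proposition~\ref{prop-estPhi} and the Remark following it) is the technical heart of the argument and is not captured by your decomposition, in which the ``Gaussian tails'' sit in $G^\sharp$ and feed directly into the $v$-equation.

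A secondary point: the clean form used in the paper is $(\partial_t-L)(v-\bar u_x\psi-\bar u_t\varphi)=Q$ with $Q$ purely quadratic (Lemma~\ref{cancel}); your displayed equation for $v_t$ has the right flavor but omits the linear cross-terms $-2D\bar u_{xx}\psi_x$, $-2D\bar u_{xt}\varphi_x$ and writes $\bar u_x D\psi_{xx}$ where $D$ should act on $\bar u_x$. Grouping everything as $(\partial_t-L)$ applied to $v-\bar u_x\psi-\bar u_t\varphi$ makes the Duhamel step and the definition of $(\psi,\varphi)$ via $E_1,E_2$ (rather than just the plateau part $e\beta_j$) completely transparent.
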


\begin{figure}
\centering\includegraphics[scale=1]{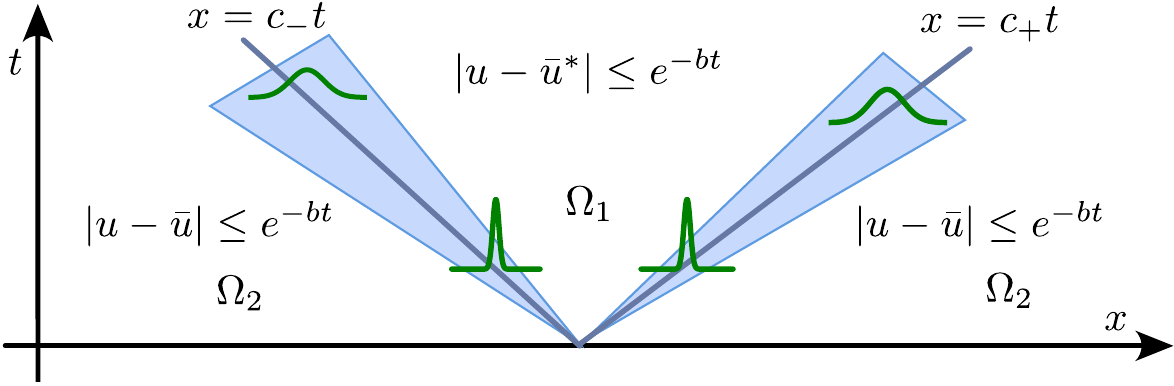} 
\caption{Shown is the strong point-wise convergence in space-time diagram in the comoving frame, where we define the shifted source $\bar{u}^*(x,t)$ by $\bar{u}(x-\delta_\psi^\infty,t-\delta_\varphi^\infty)$ with $\delta_\psi^\infty$ and $\delta_\varphi^\infty$ as in Theorem~\ref{main}. Along the rays $x\approx c_\pm t$, the perturbation will decay like a moving Gaussian.}
\label{fig-mainthm}
\end{figure}

The following corollary is a direct consequence of Theorem~\ref{main}; see Figure~\ref{fig-mainthm} for an illustration.

\begin{corollary}\label{cor-main}
Pick any constant $\varepsilon_1>0$. Under the assumptions of Theorem~\ref{main}, there are constants $b,C>0$ so that any solution $u(x,t)$ of (\ref{e:1}) that meets the assumptions of Theorem~\ref{main} satisfies
\begin{eqnarray*}
|u(x,t)-\bar{u}(x-\delta_\psi^\infty,t-\delta_\varphi^\infty)| & \leq & \varepsilon C \rme^{-bt}, \qquad (x,t)\in\Omega_1 \\
|u(x,t)-\bar{u}(x,t)| & \leq & \varepsilon C \rme^{-bt}, \qquad (x,t)\in\Omega_2
\end{eqnarray*}
where $\Omega_1=\{(x,y):(c_-+\varepsilon_1)t\leq x\leq (c_+-\varepsilon_1)t\}$ and 
$\Omega_2=\{(x,y):x\leq(c_--\varepsilon_1)t\mbox{ or } x\geq(c_++\varepsilon_1)t\}$.
\end{corollary}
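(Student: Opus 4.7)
The plan is to derive the corollary directly from Theorem~\ref{main}, exploiting the localization of $\theta(x,t)$ near the rays $x=c_\pm t$ together with the plateau behavior of $e(x,t)$. On $\Omega_1\cup\Omega_2$ one has $|x-c_\pm t|\ge\varepsilon_1 t$, hence
\[
(1+t)^{1/2}\theta(x,t) \;\le\; 2\rme^{-\varepsilon_1^2 t^2/(M_0(1+t))} \;\le\; C\rme^{-b_0 t}
\]
for some $b_0=b_0(\varepsilon_1,M_0)>0$, and the same bound, with an extra $(1+t)^{-1/2}$ gain, holds for $\theta$ itself. Standard Gaussian tail bounds for the error function then yield $|e(x,t)-1|\le C\rme^{-b_0 t}$ on $\Omega_1$ and $|e(x,t)|\le C\rme^{-b_0 t}$ on $\Omega_2$, after possibly shrinking $b_0$.

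Applying the triangle inequality to the last estimate of Theorem~\ref{main} gives, on $\Omega_1$,
\begin{align*}
|\psi(x,t)-\delta_\psi^\infty|
&\le |\psi-e\,\delta_\psi(t)| + |e-1|\,|\delta_\psi(t)| + |\delta_\psi(t)-\delta_\psi^\infty| \\
&\le \varepsilon C_1 (1+t)^{1/2}\theta(x,t) + \varepsilon C_1 \rme^{-b_0 t} + \varepsilon C_1 \rme^{-bt}
\;\le\; \varepsilon C \rme^{-b' t}
\end{align*}
for some $b'>0$, and analogously $|\varphi(x,t)-\delta_\varphi^\infty|\le\varepsilon C\rme^{-b' t}$. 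On $\Omega_2$, the same chain with $1$ and $\delta_\cdot^\infty$ replaced by $0$ yields $|\psi(x,t)|+|\varphi(x,t)|\le\varepsilon C\rme^{-b' t}$.

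To conclude on $\Omega_1$, set $(x',t'):=(x-\delta_\psi^\infty,t-\delta_\varphi^\infty)$; because $|\delta_\psi^\infty|+|\delta_\varphi^\infty|\le\varepsilon C_1$, the shifted point still lies in a region where the preceding bounds apply, possibly after mildly decreasing $\varepsilon_1$. Inserting the intermediate quantity $u(x'+\psi(x',t'),t'+\varphi(x',t'))$, Theorem~\ref{main} bounds $u(x'+\psi,t'+\varphi)-\bar u(x',t')$ by $\varepsilon C_1\theta(x',t')\le\varepsilon C\rme^{-b_0 t}$, while the mean value theorem bounds $u(x'+\delta_\psi^\infty,t'+\delta_\varphi^\infty)-u(x'+\psi,t'+\varphi)$ by $\|\mathcal{D}_{x,t} u\|_{L^\infty}$ times the exponentially small quantity from the previous paragraph. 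An identical argument comparing $u(x,t)$ with $u(x+\psi(x,t),t+\varphi(x,t))$ handles $\Omega_2$. The only mild subtlety is the uniform bound $\|\mathcal{D}_{x,t} u\|_{L^\infty}<\infty$ needed by the mean value step, which follows from parabolic regularity applied to the small $C^2$-perturbation of the smooth, bounded, time-periodic source $\bar u$; beyond this, the argument presents no real obstacle, since all the hard work is already contained in Theorem~\ref{main}.
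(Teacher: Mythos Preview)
Your proof is correct and is precisely the argument the paper has in mind: the paper does not supply a proof of Corollary~\ref{cor-main} beyond the sentence ``direct consequence of Theorem~\ref{main}'', and you have filled in exactly the routine Gaussian-tail and plateau estimates on $\theta$ and $e$ that this requires. The only comment is cosmetic: with the normalization of $\errfn$ printed in \eqref{def-epm} the plateau value of $e$ on $\Omega_1$ is not literally $1$, so your ``$|e-1|$'' should be read as ``$|e-e_\infty|$'' with the convention $e_\infty=1$ implicit in the paper's statement of the corollary; this does not affect the argument.
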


The corollary confirms the heuristic stability picture outlined earlier in this section: away from the characteristic cones $x\approx c_\pm t$, solutions that arise as localized perturbations of a source defect converge exponentially in time to an appropriate space-time translate of the original source near the core and to the original source in the far field. Our results given above are not sharp enough to give a detailed description of the phase fronts that mediate between the phases of the original and the translated source defects inside the characteristic cones.

Before proceeding, we remark that the conclusions of Theorem~\ref{main} remain true for initial perturbations that decay exponentially in space instead of the stronger Gaussian decay we assumed. Our results can also be modified to allow for initial perturbations that decay only algebraically in space as in \cite{BSZ}, though we would then recover only $(1+t)^{-1}$ decay in the wake region instead of the time-exponential decay seen in Corollary~\ref{cor-main}. Finally, our arguments extend with further elaboration as in \cite{BSZ,RZ}, to quasilinear strictly parabolic diffusion $(D(u)u_x)_x$.


\subsection*{Comparison with other results, and outline of the proof of Theorem~\ref{main}}

In our previous works \cite{BNSZ1,BNSZ2}, we considered two simpler scenarios. As can be seen from the defect classification outlined above, sources have group velocities that are opposite to those of sinks. Sinks, in turn, can be thought of as the analogues of Lax shocks. Motivated by this analogy, we proved in \cite{BNSZ1} the nonlinear stability of the $\phi=0$ solution of the equation
\begin{equation}\label{e:toy}
\phi_t = \phi_{xx} - \tanh\left(\frac{cx}{2}\right) \phi_x + \phi_x^2,
\end{equation}
where we think of $\phi(x,t)$ as the phase of the asymptotic wave trains relative to the defect at $x=0$. Compared with the usual Burgers equation, the key difference is that the characteristic speeds in (\ref{e:toy}) point away from $x=0$, so that we can think of $\phi=0$ as a source defect, rather than a sink or Lax shock. In \cite{BNSZ2}, we proved the nonlinear stability of spectrally stable sources of the complex cubic-quintic Ginzburg--Landau equation
\begin{equation}\label{e:cgl}
A_t = (1+\rmi\alpha)A_{xx} + \mu A - (1+\rmi\beta) A|A|^2 +  (\gamma_1+\rmi\gamma_2) A|A|^4.
\end{equation}
In this case, sources are of the special form
\[
A(x,t) = r(x) \rme^{\rmi\varphi(x)} \rme^{-\rmi\omega_0t},
\]
so that their time dependence disappears in a corotating frame due to the gauge invariance $A\mapsto\rme^{\rmi\kappa}A$ of (\ref{e:cgl}). Our analysis utilized this property extensively to extract an explicit equation for the phase that  agreed to leading order with (\ref{e:toy}) and that we could therefore analyse in a similar fashion. The result stated in \cite[Theorem~1.1]{BNSZ2} agrees with Theorem~\ref{main}. However, in \cite{BNSZ2}, we were also able to resolve the shape of the phase fronts inside the characteristic cones: up to Gaussian error terms, the phase fronts are given by (\ref{def-epm}).

The proofs in \cite{BNSZ1,BNSZ2} rely on the fact that perturbations of a source defect satisfy a variation-of-constants formula that involves the spatio-temporal Green's function of the linearization about the source defect integrated against the nonlinear terms. The goal is then to show that solutions to the variation-of-constants formula exist that satisfy appropriate spatio-temporal estimates that show that perturbations behave as desired. To implement this idea, it is necessary to derive detailed pointwise bounds on the Green's function: we obtained these bounds by establishing expansions of the resolvent kernel of the linearization, which were then transferred to the Green's function using Laplace transforms. The strategy for tackling the general case of reaction-diffusion systems is similar, though substantial technical challenges arise that were not present in the simpler problems. Firstly, sources are genuinely time-dependent, and the connection between the time-dependent resolvent kernel and the Green's function of (\ref{lin-intro}) needs to be closely examined and resolved. Secondly, in the absence of gauge invariances, the phase is not explicitly determined by the structure of the equations but must instead be strategically defined in a way that captures the main asymptotic behavior of solutions.

We address the first issue by extending the approach taken in \cite{BSZ} in the case of time-periodic viscous Lax shocks with asymptotically constant end states to the case of source defects that converge to wave trains. The outcome of this analysis is the following expansion of the Green's function of the linearization (\ref{lin-intro}).

\begin{theorem}\label{linthm}
Under the assumptions of Theorem~\ref{main}, there are constants $C,\eta>0$ so that the followings hold. The Green's function $G(x,t;y,s)$ of the linearization \eqref{lin-intro}
\[
v_t = Dv_{xx} + c_\mathrm{d}v_x+ f_u(\bar{u}(x, t))v
\]
can be written as
\begin{equation}\label{Gdecomp}
 G(x,t;y,s) =  \bar{u}_x(x,t) E_1(x,t; y,s) + \bar{u}_t(x,t) E_2(x,t; y,s) + G_R(x,t; y,s)
\end{equation}
with
\[
E_j (x,t; y,s) = \chi(t)\big(  e(x-y,t-s) \beta_j(y) + G_j(x,t; y,s)\big), 
\]
where $e$ is as in \eqref{def-epm},
$\beta_j(y)$ is exponentially localized (that is, $|\beta_j(y)|\leq C\rme^{-\eta|y|}$), the remainder terms $G_j(x,t; y,s)$ 
are bounded by a moving Gaussian:
$$|G_j(x,t;y,s)|\leq C\theta(x-y,t-s),$$ 
and $\chi(t)$ is a smooth cut-off function that vanishes in $[0,1]$ and is equal to one for $t\geq2$. In addition, there hold the following derivative bounds, for $k=0,1$ and all $t\geq s$,
\begin{eqnarray*}
|\mathcal{D}_{x,t}^k G_R (x,t;y,s)| & \leq & C(t-s+1)^k(t-s)^{-k} ((t-s)^{-\frac12} + \rme^{-\eta |y|}) \theta(x-y,t-s)
\\
|\mathcal{D}_{x,t}^{1+k} G_j(x,t; y,s)| & \leq & C(t-s+1)^k(t-s)^{-k} ((t-s)^{-\frac12} + \rme^{-\eta |y|}) \theta(x-y,t-s).
\end{eqnarray*}
\end{theorem}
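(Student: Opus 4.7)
The plan is to extend the pointwise Green's-function analysis of \cite{BSZ} for time-periodic viscous Lax shocks to the present source-defect setting. The starting point is the Floquet--Laplace representation
$$
G(x,t;y,s) = \frac{1}{2\pi\rmi}\int_{\Gamma} \rme^{\lambda(t-s)}\, G_\lambda(x,t;y,s)\,\rmd\lambda,
$$
where $G_\lambda$ is the kernel associated to the resolvent of the time-periodic operator obtained by applying Floquet theory to \eqref{lin-intro}, and $\Gamma$ is a contour enclosing the relevant Floquet--Bloch spectrum (quotiented by $2\pi\rmi/T_{\mathrm d}$). First I would construct $G_\lambda$ via spatial dynamics: writing the resolvent equation as a first-order ODE in $x$ with coefficients that are periodic in $t$, and using the exponential dichotomies on $x\to\pm\infty$ inherited from the asymptotic wave trains as developed in \cite{SS04a}, one expresses $G_\lambda$ as a ratio of decaying Jost-type solutions over an Evans function.

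The next step is to identify the singular structure of $G_\lambda$ near the critical spectrum. Two ingredients contribute: the double zero of the Evans function at $\lambda=0$, corresponding to translations in $x$ and $t$ and yielding eigenfunctions $\bar u_x(x,t)$ and $\bar u_t(x,t)$; and the two tangential branches $\lambda_\pm(\gamma)=-\rmi c_\pm\gamma-\mathrm{d}_\pm\gamma^2+\rmO(|\gamma|^3)$ of the essential spectrum of the asymptotic wave trains. Near $\lambda=0$, expanding the Jost solutions in the Floquet--Bloch parameter gives a pole-type decomposition
$$
G_\lambda(x,t;y,s) = \frac{\bar u_x(x,t)\phi_1^\ast(y,s) + \bar u_t(x,t)\phi_2^\ast(y,s)}{\lambda} + G_\lambda^{\mathrm{reg}}(x,t;y,s),
$$
where the $\phi_j^\ast$ are exponentially localized adjoint eigenfunctions (whose $s$-averages will produce the $\beta_j(y)$) and $G_\lambda^{\mathrm{reg}}$ remains bounded after suitable projection onto the wave-train spectral curves.

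I would then deform $\Gamma$ so that its low-frequency portion hugs the parabolic arcs $\lambda=\lambda_\pm(\gamma)$ while its remainder is pushed into $\Re\lambda\leq-\delta$. The high-frequency piece, together with parabolic smoothing, yields a time-exponentially decaying contribution that is absorbed into $G_R$ and $G_j$ (and explains the cutoff $\chi(t)$). For the low-frequency piece I would change variables $\lambda\leftrightarrow\gamma$ along each branch; the $1/\lambda$ pole integrated against $\rme^{\lambda_\pm(\gamma)(t-s)}/\lambda$ produces, via a standard stationary-phase / heat-kernel computation, precisely the error-function plateau $e(x-y,t-s)$ of \eqref{def-epm}, while $G_\lambda^{\mathrm{reg}}$ contributes the moving-Gaussian remainders $G_j$ and $G_R$ with bounds controlled by $\theta(x-y,t-s)$. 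Derivative bounds follow by differentiating under the contour integral: each $\partial_x$ or $\partial_t$ either brings down a factor of $\gamma$ (contributing the $(t-s)^{-1/2}$ decay via the heat kernel) or lands on the localized factors $\bar u_x,\bar u_t,\beta_j$ (contributing the $\rme^{-\eta|y|}$ term).

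The main obstacle is the genuinely time-periodic nature of the background: unlike the asymptotically constant case of \cite{BSZ}, the spatial-dynamics formulation lives in an infinite-dimensional function space of $t$-periodic maps, so the Jost solutions and the Evans function must be constructed there, and one must verify that the exponential dichotomies depend smoothly on $\lambda$ uniformly across the wave-train essential spectrum. Checking this smoothness, and establishing the uniformity in $(x,t,y,s)$ of the low-frequency expansion so as to yield the pointwise bounds stated, is the delicate technical step; it is where the framework of \cite{SS04a,SS04b,BSZ} has to be genuinely extended to source defects converging to wave trains, and where I expect the bulk of the work to lie.
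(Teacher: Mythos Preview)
Your overall strategy matches the paper's: Floquet--Laplace representation, spatial dynamics on an infinite-dimensional space of $t$-periodic functions, exponential dichotomies from \cite{SS04a}, a meromorphic $1/\lambda$ pole at the origin carried by $\bar u_x,\bar u_t$ against exponentially localized adjoints, and contour deformation \`a la \cite{ZH,BSZ} to produce the error-function plateau and Gaussian tails. The paper differs from your sketch in execution rather than philosophy: it uses rectangular contours near the origin rather than hugging the parabolic arcs, and it must first peel off explicit constant-coefficient heat kernels $H_{\lambda,0},H_{\lambda,1}$ because the delta source $\delta(x-y)\delta(t-s)$ does not lie in the spatial-dynamics space $Y=H^{5/4}(\mathbb T)\times H^{3/4}(\mathbb T)$, so the variation-of-constants formula cannot be applied to it directly.

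There is, however, one genuine gap in your outline. You say that $G_\lambda^{\mathrm{reg}}$ ``contributes the moving-Gaussian remainders $G_j$ and $G_R$ with bounds controlled by $\theta(x-y,t-s)$,'' and you invoke the extra $(t-s)^{-1/2}$ only when discussing \emph{derivative} bounds. But the theorem requires $|G_R|\le C\big((t-s)^{-1/2}+\rme^{-\eta|y|}\big)\theta$ already at order $k=0$: $G_R$ must behave like a \emph{differentiated} Gaussian, not merely a Gaussian. A naive ``bounded after projection'' remainder in $\lambda$ would, upon Laplace inversion, give only $|G_R|\lesssim\theta$, which is too weak to close the nonlinear iteration (a Gaussian convolved against a squared Gaussian does not return a Gaussian). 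The paper isolates this as the key new point: in the Laurent expansion of the meromorphically extended dichotomies near $\lambda=0$, every term that is $\rmO(\lambda^0)$ (i.e.\ constant in $\lambda$) is shown to carry either a factor $V_j(x)$---hence is absorbed into the $E_j$ pieces---or a factor $\rme^{-\eta|y|}$. What remains in $G_R$ at low frequency is therefore $\rmO(|\lambda|)$ or localized in $y$, and it is the extra $|\lambda|$ that produces the additional $(t-s)^{-1/2}$ after inversion. Your sketch does not account for this structural fact, and without it the stated $G_R$ bound does not follow.
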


The main advance of the preceding theorem over the results we obtained in \cite{BNSZ1,BNSZ2} for the cases of (\ref{e:toy}) and (\ref{e:cgl}) is that the remainder term $G_R$ behaves like a differentiated Gaussian, and not only as a Gaussian as concluded in \cite{BNSZ1,BNSZ2}. As we will now discuss, this stronger result for the linearized equation is key for closing a nonlinear iteration scheme for the variation-of-constants formula.

If $u(x,t)$ is a solution that is initially close to the source defect, we introduce the spatial and temporal shifts $\psi(x,t)$ and $\varphi(x,t)$, respectively, and a profile adjustment $v(x,t)$ to compare $u(x,t)$ via
\[
u(x+\psi(x,t),t+\varphi(x,t)) = \bar{u}(x,t) + v(x,t)
\]
to the original source $\bar{u}(x,t)$. We will show that $(\psi,\varphi,v)$ satisfies the nonlinear system
\[
\left[\partial_t - D\partial^2_{x} - c_\mathrm{d} \partial_x - f_u(\bar{u}(x,t))\right] (v-\bar{u}_x\psi-\bar{u}_t\varphi) = \rmO((\mathcal{D}_{x,t}\phi,\mathcal{D}_{x,t}\psi,v)^2).
\]
Using the Green's function $G(x,t;y,s)$ of the linearization about the defect on the left-hand side, we can rewrite this equation as in variation-of-constants form as
\begin{eqnarray}
(v-\bar{u}_x\psi-\bar{u}_t\varphi)(x,t) & = & \int_\mathbb{R} G(x,t;y,0) (v-\bar{u}_x\psi-\bar{u}_t\varphi)(y,0)\, \rmd y
\label{e:voc} \\ \nonumber &&
+ \int_0^T \int_\mathbb{R} G(x,t;y,s) \rmO((\mathcal{D}_{x,t}\phi,\mathcal{D}_{x,t}\psi,v)^2)\, \rmd y\,\rmd s.
\end{eqnarray}
Inspecting Theorem~\ref{linthm}, we see that the term $G_R$ in the Green's function behaves like a differentiated Gaussian. If, as indicated in (\ref{err_profile}), the profile perturbation $v$ behaves like a Gaussian, then we integrate in the term $\int_0^T\int_\mathbb{R} \ldots \rmd y\, \rmd s$ an differentiated Gaussian $G_R$ against a squared Gaussian: the outcome is a function that behaves again like a Gaussian. Thus, if we ignore the terms in the Green's function that involve $\bar{u}_x$ and $\bar{u}_t$, the variation-of-constants formula maps Gaussians into Gaussians, and we can expect that the fixed-point also behaves like a Gaussian. In summary, the estimate for $G_R$ will allow us to show that profile perturbations indeed decay like Gaussians as claimed in (\ref{err_profile})---note that, if $G_R$ behaved only like a Gaussian, the double integral would produce a function that is not even bounded (this is related to the fact that solutions of $u_t=u_{xx}+u^2$ exhibit finite-time blow-up).

The decomposition (\ref{Gdecomp}) of the Green's function also allows us to extract equations for the spatial and temporal shifts $(\psi,\varphi)$ by separately combining all terms in (\ref{e:voc}) that multiply the functions $\bar{u}_x$ and $\bar{u}_t$, respectively, and requiring that the resulting two expressions vanish identically. This results in an equation of the form
\[
\psi(x,t) = -\int_\mathbb{R} E_1(x,t;y,0) (v-\bar{u}_x\psi-\bar{u}_t\varphi)(y,0)\, \rmd y - \int_0^T \int_\mathbb{R} E_1(x,t;y,s) \rmO((\mathcal{D}_{x,t}\phi,\mathcal{D}_{x,t}\psi,v)^2)\, \rmd y\,\rmd s
\]
for $\psi(x,t)$ and an analogous equation for $\varphi(x,t)$. Taking derivatives of these equations with respect to $x$ then allows us to use similar arguments to show that $\psi_x$ and $\varphi_x$ behave like Gaussians as stated in (\ref{err_phase}). We note that this approach to extracting phase equations is similar to the approach used in \cite{JZ} and elsewhere in the simpler spatially-periodic case.

\paragraph{Notation:} We shall use $C$ to denote a universal constant that may change from line to line but is independent of the initial data, space, and time. We also use the notation $f=\rmO(g)$ or $f\lesssim g$ to mean that $|f|\leq C|g|$. 


\section{Hypotheses}\label{s:prelim}

We now discuss our hypotheses in detail. Our goal is to give a streamlined version of the properties of these structures in the form needed in the remainder of this paper, and we refer to \cite{SS04a} for more background on wave trains and source defects. Throughout, we will work in the co-moving frame of a defect $\bar{u}(x-c_\mathrm{d}t,t)$, and we will also rescale space and time to set its time period to $2\pi$.

Thus, we consider a reaction-diffusion system
\begin{equation}\label{eqs-RD}
u_t = Du_{xx} + c_\mathrm{d} u_x + f(u)
\end{equation}
with $(x,t)\in\mathbb{R}\times\mathbb{R}^+$, $u\in\mathbb{R}^n$, and $D \in \mathbb{R}^{n\times n}$ an invertible, diagonal matrix. Our first hypothesis captures the assumption that $\bar{u}(x,t)$ is a defect solution that is $2\pi$-periodic in time and converges in space to two, possibly different, wave trains (spatially periodic traveling waves) as $x\to\pm\infty$.

\begin{hypothesis}\label{h:so}
Assume that $u(x,t)=u_\mathrm{wt}(k_\pm x-t;k_\pm)$ satisfies (\ref{eqs-RD}), where $k_\pm\neq0$ and $u_\mathrm{wt}(\theta;k_\pm)$ is $2\pi$-periodic in $\theta$. We also assume that $\bar{u}(x,t)$ is $2\pi$-periodic in $t$ and satisfies (\ref{eqs-RD}) as well as
\[
\bar{u}(x,t) - u_\mathrm{wt}(k_\pm x - t;k_\pm) \to 0, \qquad x\to \pm\infty
\]
uniformly in time.
\end{hypothesis}

Next, define the linear operators
\[
L_\pm := k_\pm^2 D\partial^2_{\theta} + (k_\pm c_\mathrm{d}+1) \partial_\theta + f_u(u_\mathrm{wt}(\theta;k_\pm))
\]
associated with the asymptotic wave trains. Following \cite{SS04a}, we make the following assumption:

\begin{hypothesis}\label{h:wt}
\begin{compactitem}
\item The operators $L_\pm$ posed on $L^2(\mathbb{T})$, with $\mathbb{T} = [0,2\pi]/\!\sim$, each have an algebraically simple eigenvalue at the origin $\lambda=0$.
\item It follows that the spectrum of $L_\pm$ posed on $L^2(\mathbb{R})$ near the origin consists of a smooth curve $\widetilde{\lambda}_\pm(\xi)=-\rmi (c_\pm+c_\mathrm{d}-1/k_\pm)\xi-\mathrm{d}_\pm\xi^2+\rmO(|\xi|^3)$ for appropriate real numbers $c_\pm$ and $\mathrm{d}_\pm$: we assume that $c_-<0<c_+$ and $\mathrm{d}_\pm>0$.
\item We assume that the spectrum $L_\pm$ posed on $L^2(\mathbb{R})$ is contained in the open left half-plane with the exception of the curve $\widetilde{\lambda}_\pm(\xi)$ for sufficiently small $\xi$.
\end{compactitem}
\end{hypothesis}

We now state a result that relates the spectra of the operators $L_\pm$ to the Floquet spectra of the linearization
\[
v_t = Dv_{xx} + c_\mathrm{d} v_x + f_u(u_\mathrm{wt}(k_\pm x-t;k_\pm)) v
\]
of the reaction-diffusion system (\ref{eqs-RD}) about the asymptotic wave trains $u_\mathrm{wt}(k_\pm x-t;k_\pm)$. We denote by $\Phi_\pm:v(x,0)\mapsto v(x,2\pi)$ the linear time-$2\pi$ maps posed on $L^2(\mathbb{R},\mathbb{R}^n)$ associated with the linear PDE. By definition, a complex number $\lambda$ is said to be in the Floquet spectrum of $\Phi_\pm$ if, and only if, $\rho=\rme^{2\pi\lambda}$ is in the spectrum of the operator $\Phi_\pm$. We then have the following result.

\begin{lemma}[\cite{SS04a}]\label{lem-ss}
Assume that Hypotheses~\ref{h:wt} is met, then the Floquet spectrum of the operator $\Phi_\pm$ on $L^2(\mathbb{R},\mathbb{R}^n)$ lies in the open left half-plane with the exception of the curve $\lambda_\pm(\xi)$ with expansion
\[
\lambda_\pm(\xi) = -\rmi c_\pm \xi - \mathrm{d}_\pm \xi^2 + \rmO(|\xi|^3)
\]
for $\xi\in\mathbb{R}$ close to zero.
\end{lemma}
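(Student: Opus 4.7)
The plan is to reduce the Floquet-Bloch spectrum of $\Phi_\pm$ to the Bloch spectrum of $L_\pm$ via a change of variables into the wave train's comoving frame, and then invoke Hypothesis~\ref{h:wt} together with analytic perturbation theory around the simple eigenvalue at the origin.

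First I would substitute $v(x,t) = w(k_\pm x - t, t)$; a direct computation using $v_t = -w_\theta + w_t$, $v_x = k_\pm w_\theta$, and $v_{xx} = k_\pm^2 w_{\theta\theta}$ converts the linearized equation about the wave train into the autonomous problem $w_t = L_\pm w$ on $L^2(\mathbb{R}_\theta)$. Writing $(Tv)(\theta) := v(\theta/k_\pm)$ for the rescaling isomorphism and tracking the extra $\theta$-shift $\theta(x,2\pi) = k_\pm x - 2\pi$ incurred in passing back to $x$-coordinates, this yields the conjugacy $T\Phi_\pm T^{-1} = S_{2\pi}\, \rme^{2\pi L_\pm}$ on $L^2(\mathbb{R}_\theta)$, where $S_{2\pi}$ denotes translation by $2\pi$. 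Hence $\mathrm{spec}(\Phi_\pm) = \mathrm{spec}(S_{2\pi}\, \rme^{2\pi L_\pm})$.

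Next, I would apply Floquet-Bloch decomposition to the $2\pi$-periodic operator $L_\pm$: on each fiber $\eta \in [-\tfrac12, \tfrac12)$ it acts as $L_\pm(\eta) := \rme^{-\rmi\eta\theta} L_\pm \rme^{\rmi\eta\theta}$ on $L^2(\mathbb{T})$, while $S_{2\pi}$ acts as multiplication by $\rme^{-2\pi\rmi\eta}$. Hence $\mu \in \mathrm{spec}(S_{2\pi}\, \rme^{2\pi L_\pm})$ if and only if $\mu = \rme^{2\pi(\tilde\lambda(\eta) - \rmi\eta)}$ for some $\eta$ and some $\tilde\lambda(\eta) \in \mathrm{spec}(L_\pm(\eta))$, so the Floquet exponents of $\Phi_\pm$ are $\lambda = \tilde\lambda(\eta) - \rmi\eta$ modulo $\rmi\mathbb{Z}$. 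The strict left-half-plane bound on the non-critical part of $\mathrm{spec}(L_\pm)$ supplied by Hypothesis~\ref{h:wt} transfers immediately to $\mathrm{spec}(\Phi_\pm)$ away from the marginal curve, yielding the stability half of the claim.

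For the expansion near zero, the simple Bloch eigenvalue curve $\tilde\lambda_\pm(\eta) = -\rmi(c_\pm + c_\mathrm{d} - 1/k_\pm)\eta - \mathrm{d}_\pm \eta^2 + \rmO(|\eta|^3)$ from Hypothesis~\ref{h:wt}---itself a standard output of analytic perturbation theory applied to the algebraically simple zero eigenvalue of $L_\pm(0)$---is substituted into $\lambda = \tilde\lambda_\pm(\eta) - \rmi\eta$. After a linear reparameterization from the $\theta$-Bloch variable $\eta$ to the $x$-Bloch variable $\xi$ natural on the Floquet-Bloch side, the $-\rmi\eta$ shift combines with the linear coefficient of $\tilde\lambda_\pm$ to produce the claimed expansion $\lambda_\pm(\xi) = -\rmi c_\pm \xi - \mathrm{d}_\pm \xi^2 + \rmO(|\xi|^3)$. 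I expect the main obstacle to be the careful bookkeeping of this parameter correspondence and the verification that the seemingly awkward coefficient $c_\pm + c_\mathrm{d} - 1/k_\pm$ baked into Hypothesis~\ref{h:wt} is precisely the one that produces $-\rmi c_\pm$ after the subtraction of $\rmi\eta$ and the rescaling; once this identification is pinned down, the remainder of the argument is routine.
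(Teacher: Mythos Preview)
The paper does not supply its own proof of this lemma; it is quoted from \cite{SS04a}, so there is no in-paper argument to compare against. Your strategy---pass to the wave frame $\theta=k_\pm x-t$ to obtain the autonomous evolution $w_t=L_\pm w$, deduce the conjugacy $T\Phi_\pm T^{-1}=S_{2\pi}\,\rme^{2\pi L_\pm}$, and then read off the Floquet exponents fibrewise via the Bloch decomposition of $L_\pm$---is exactly the standard route taken in \cite[\S3]{SS04a}, and the structural conclusion $\lambda=\tilde\lambda(\eta)-\rmi\eta$ is correct.

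One caution on the final step. You invoke a ``linear reparameterization'' from $\eta$ to $\xi$ to reconcile the linear coefficients, but any nontrivial rescaling $\xi=a\eta$ with $a\neq\pm1$ also rescales the quadratic term by $a^2$, while the same symbol $\mathrm{d}_\pm$ appears unchanged in both Hypothesis~\ref{h:wt} and Lemma~\ref{lem-ss}. Taking $\lambda=\tilde\lambda_\pm(\eta)-\rmi\eta$ at face value with the coefficient from Hypothesis~\ref{h:wt} produces a linear term $-\rmi(c_\pm+c_\mathrm{d}-1/k_\pm+1)\eta$, not $-\rmi c_\pm\eta$, and this discrepancy is not removed by any rescaling that preserves $\mathrm{d}_\pm$. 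This is precisely the bookkeeping obstacle you anticipated; it reflects a normalisation choice (or an inexact constant) in the paper's formulation of Hypothesis~\ref{h:wt} rather than a flaw in your method. The mechanism you identify---that the group velocity of the autonomous flow $w_t=L_\pm w$ in $\theta$ differs from that of $\Phi_\pm$ in $x$ by exactly the shift contributed by $S_{2\pi}$---is what drives the result, independent of how the constants are labelled.
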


In particular, as shown in \cite[\S3]{SS04a}, the numbers $c_\pm$ are the group velocities of the wave trains in the moving frame (\ref{eqs-RD}). The assumption on the signs of $c_\pm$ ensures that the solution $\bar{u}(x,t)$ of (\ref{eqs-RD}) is a source defect in the classification laid out in \cite{SS04a}. The next result shows that the convergence of the defect to the asymptotic wave trains is exponential.

\begin{lemma}[{\cite[Theorem~1.4 and Corollary~5.4]{SS04a}}]\label{lem-defect}
Assume that Hypotheses~\ref{h:so} and~\ref{h:wt} are met. Then there are positive constants $C,\eta>0$ such that
\[
|\bar{u}(x,t) - u_\mathrm{wt}(k_\pm x-t;k_\pm)| \leq C \rme^{-\eta |x|}
\]
uniformly in $(x,t)$.
\end{lemma}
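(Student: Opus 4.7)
The plan is to establish the exponential convergence via a spatial dynamics formulation, treating $x$ as the ``time'' variable and $t\in\mathbb{T}:=\mathbb{R}/2\pi\mathbb{Z}$ as the periodic spatial variable. Writing $U=(u,u_x)$, the reaction-diffusion system \eqref{eqs-RD} becomes a first-order (ill-posed) dynamical system $U_x=\mathcal{F}(U)$ on a function space over $\mathbb{T}$, for example $H^{1/2}(\mathbb{T},\mathbb{R}^n)\times L^2(\mathbb{T},\mathbb{R}^n)$. In this formulation, each wave train $u_\mathrm{wt}(k_\pm x-t;k_\pm)$ becomes a $(2\pi/k_\pm)$-periodic orbit $\Gamma_\pm$ of the spatial dynamics, and the defect $\bar u(x,t)$ is precisely a heteroclinic (or homoclinic, if $k_+=k_-$) orbit connecting $\Gamma_-$ at $x=-\infty$ to $\Gamma_+$ at $x=+\infty$.

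Next, I would linearize the spatial dynamics about each periodic orbit $\Gamma_\pm$ and invoke the Sandstede--Scheel duality that converts temporal Floquet-Bloch data into spatial Floquet data: a spatial Floquet exponent $\nu$ at temporal frequency $\lambda$ exists precisely when there is a nontrivial bounded solution of $(L_\pm -\lambda)w=0$ of the form $w(x,t)=\rme^{\nu x} p(x,t)$ with $p$ bi-periodic. Setting $\lambda=0$ and combining with Hypothesis \ref{h:wt} together with Lemma \ref{lem-ss}, the only spatial Floquet exponents on the imaginary axis are those associated with the critical temporal curve $\widetilde\lambda_\pm(\xi)$ at $\xi=0$, which correspond to the translation/derivative modes of the wave trains; all other spatial Floquet exponents of the linearization about $\Gamma_\pm$ are uniformly bounded away from $\mathrm{i}\mathbb{R}$.

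Given this spectral gap, standard invariant-manifold theory for (ill-posed) spatial dynamics---as developed by Sandstede--Scheel, using e.g.\ the Kato or dichotomy approach to build center--stable and center--unstable manifolds for elliptic evolution systems---produces, at each $\Gamma_\pm$, a splitting into a finite-dimensional center bundle (tangent to the translation modes and to the curve of wave trains in wavenumber) and infinite-dimensional strong stable/strong unstable directions whose contraction/expansion rates are bounded below by some $\eta_0>0$ determined by the spectral gap. The source condition $c_-<0<c_+$, together with the fact that $\bar u$ is a genuine defect (not merely modulating its wavenumber), forces the heteroclinic $\bar u$ to leave and enter the periodic orbits transverse to the respective center manifolds, i.e.\ to approach $\Gamma_\pm$ along the strong unstable and strong stable fibers. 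Exponential convergence in $x$ at any rate $\eta<\eta_0$ then follows from the contraction property of these fibers, uniformly in $t$ because the fibers are defined over the whole circle $\mathbb{T}$.

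The main obstacle is the last step: justifying that $\bar u$ actually lies on the strong stable/unstable manifolds rather than merely on the center--stable/center--unstable manifolds. This is the point where the source classification enters substantively. Because sources exist only for discrete wavenumbers $k_\pm$ (Fredholm-type codimension argument of \cite{SS04a}), the defect cannot accommodate a nonzero center component at either end without destroying the matching condition; equivalently, any center-direction component along $\Gamma_\pm$ would correspond to a nontrivial bounded kernel element of the linearization on $L^2_a(\mathbb{R})$ that does not lie in the span of $\bar u_x,\bar u_t$, contradicting the spectral assumptions embedded in Hypothesis \ref{h:ss} (and made quantitative in \cite[\S5]{SS04a}). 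Once this transversality is in hand, the exponential bound $|\bar u(x,t)-u_\mathrm{wt}(k_\pm x-t;k_\pm)|\leq C\rme^{-\eta|x|}$ is immediate from the invariant-manifold estimates, uniformly in $t\in\mathbb{T}$.
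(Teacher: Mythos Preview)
The paper does not give its own proof of this lemma; it is simply quoted from \cite[Theorem~1.4 and Corollary~5.4]{SS04a}. Your spatial-dynamics outline is indeed the approach taken there: recast \eqref{eqs-RD} as an ill-posed evolution in $x$ on a space of $2\pi$-periodic functions of $t$, identify the wave trains with periodic orbits $\Gamma_\pm$, and deduce exponential convergence of the heteroclinic defect from normal hyperbolicity of $\Gamma_\pm$.

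However, your final paragraph contains a real misstep. You invoke Hypothesis~\ref{h:ss} to rule out a center-direction component, but Lemma~\ref{lem-defect} assumes only Hypotheses~\ref{h:so} and~\ref{h:wt}; spectral stability of the defect is neither available nor needed. The ``obstacle'' you identify dissolves under Hypothesis~\ref{h:wt} alone: the simplicity of the zero eigenvalue of $L_\pm$ on $L^2(\mathbb{T})$, together with the assumption that the rest of the temporal spectrum lies in the open left half-plane, forces the spatial-dynamics linearization about $\Gamma_\pm$ at $\lambda=0$ to have exactly one Floquet exponent on $\rmi\mathbb{R}$, namely the simple exponent $\nu=0$ tangent to the periodic orbit itself (cf.\ the discussion at the start of \S\ref{s:ed2}). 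There is no second neutral direction to worry about; the $k$-derivative of the wave-train family yields a linearly growing solution, not a bounded Floquet mode. Hence $\Gamma_\pm$ is normally hyperbolic with one-dimensional center equal to its own tangent bundle, and any orbit that converges to $\Gamma_\pm$ (which $\bar u$ does, by Hypothesis~\ref{h:so}) lies on the stable manifold and acquires an asymptotic phase, converging exponentially to a fixed translate of the wave train. The source condition $c_-<0<c_+$ is irrelevant to this particular conclusion; it governs the robustness and codimension counts for sources, not the rate of approach to the asymptotic states.
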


Our last assumption is concerned with nondegeneracy and spectral stability of the source defect $\bar{u}(x,t)$. Consider the linearization
\begin{equation}\label{eqs-lin}
v_t = Dv_{xx} + c_\mathrm{d} v_x + f_u(\bar{u}(x,t)) v
\end{equation}
of (\ref{eqs-RD}) about the source $\bar{u}(x,t)$ and denote by $\Phi_\mathrm{d}:v(x,0)\mapsto v(x,2\pi)$ the associated linear time-$2\pi$ map, where $v(x,t)$ denotes the solution of (\ref{eqs-lin}). We can pose $\Phi_\mathrm{d}$ on $L^2(\mathbb{R})$ and on $L^2_a(\mathbb{R})$, where the latter space is defined by
\[
L^2_a(\mathbb{R}) := \Big\{ u: \rme^{-a|x|} u(x) \in L^2(\mathbb{R}) \Big\}.
\]
Our spectral assumption on the source defect then reads as follows.

\begin{hypothesis}\label{h:ss}
For all sufficiently small $0<a\ll1$, the Floquet spectrum of $\Phi_\mathrm{d}$ on $L^2_a(\mathbb{R})$ lies in the open left half plane and is bounded away from the imaginary axis, with the exception of a Floquet eigenvalue at $\lambda=0$ with geometric and algebraic multiplicity two. The corresponding Floquet eigenfunctions are $\bar{u}_x(x,t)$ and $\bar{u}_t(x,t)$.
\end{hypothesis}

This completes the description of the hypotheses we need in Theorem~\ref{main}, and we now recall a few additional properties of source defects that follow from the hypotheses we made above. Consider the formal $L^2$-adjoint of (\ref{eqs-lin}) given by
\begin{equation}\label{eadj1}
-w_t = Dw_{xx} - c_\mathrm{d} w_x + f_u(\bar{u}(x,t))^* w
\end{equation}
posed on $L^2_{-a}(\mathbb{R})$ with $0<a\ll1$ as in Hypothesis~\ref{h:ss} and denote its period map by $\Phi_\mathrm{d}^\mathrm{adj}$. It follows from Hypothesis~\ref{h:ss} and \cite[Corollary~4.6]{SS04a} that the null space of $\Phi_\mathrm{d}^\mathrm{adj}$ in $L^2_{-a}(\mathbb{R})$ is two-dimensional: we choose two linearly independent $2\pi$-periodic solutions of (\ref{eadj1}) that form a basis of this null space and denote these solutions by $\psi_1(x,t)$ and $\psi_2(x,t)$. Due to the weights in the space $L^2_{-a}(\mathbb{R})$, there exist constants $C,\eta>0$ such that
\begin{equation}\label{eadj2}
|\psi_j(x,t)| \leq C \rme^{-\eta|x|}, \qquad x\in\mathbb{R},\, t\in\mathbb{R}
\end{equation}
for $j=1,2$. Finally, \cite[Corollary~4.6]{SS04a} implies that the matrix $M\in\mathbb{R}^{2\times2}$ given by
\begin{equation}\label{eadj3}
M = \int_\mathbb{R} \int_0^{2\pi} \begin{pmatrix}
\langle \psi_1(x,t),\bar{u}_x(x,t)\rangle_{\mathbb{R}^n} & \langle \psi_1(x,t),\bar{u}_t(x,t)\rangle_{\mathbb{R}^n} \\
\langle \psi_2(x,t),\bar{u}_x(x,t)\rangle_{\mathbb{R}^n} & \langle \psi_2(x,t),\bar{u}_t(x,t)\rangle_{\mathbb{R}^n}
\end{pmatrix}\,\rmd t\, \rmd x
= \begin{pmatrix} 1 & 0 \\ 0 & 1 \end{pmatrix}
\end{equation}
is well defined and invertible (and indeed equal to the $2\times2$ identity matrix).


\section{Laplace transform relates Green's function and resolvent kernel}

As outlined in the introduction, the Green's function $G(x,t;y,s)$ of the linearization
\[
v_t = Lv, \qquad L := D\partial_{x}^2 + c_\mathrm{d} \partial_x + f_u(\bar{u}(x,t))
\]
about the defect $\bar u(x,t)$ is key to our analysis, and we now outline our approach to calculating it. By definition, for each fixed $(y,s)$, the Green's function $G(x,t;y,s)$ satisfies 
\begin{equation}\label{eqs-Gkernel}
(\partial_t - L) G(x,t;y,s) = 0, \qquad
G(x,s;y,s) = \delta(x-y)
\end{equation}
where $\delta(\cdot)$ denotes the standard delta function. The next lemma shows that we can construct $G(x,t; y,s)$ via the Laplace transform.

\begin{lemma}\label{lem-introGlambda} For each $(y,s)$, let $\widetilde{G}_{\lambda}(x,t;y,s)$ be a $2\pi$-periodic function in time $t$, satisfying the following resolvent equation
\begin{equation}\label{eqs-reskernel}
(\lambda + \partial_t - L) \widetilde{G}_{\lambda}(x,t;y,s) =  \delta(x-y) \delta(t-s), 
\end{equation}
for $\lambda \in \mathbb{C}$. Then, the function
\begin{equation}\label{ilt}
G(x,t;y,s) := \frac{1}{2\pi\rmi} \int_{\mu-\frac \rmi2}^{\mu + \frac \rmi2} \rme^{\lambda t} \widetilde{G}_{\lambda}(x,t;y,s) \,\rmd\lambda,
\end{equation}
 for some large constant $\mu>0$, is the Green's function of \eqref{eqs-Gkernel}. 
\end{lemma}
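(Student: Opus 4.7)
The plan is to verify directly that the function $G(x,t;y,s)$ defined by \eqref{ilt} satisfies both conditions in \eqref{eqs-Gkernel}. The key algebraic observation is that conjugation by $\rme^{\lambda t}$ intertwines $\partial_t - L$ with $\lambda + \partial_t - L$, so the resolvent equation \eqref{eqs-reskernel} collapses $(\partial_t - L) G$ to a distribution supported at $t = s$. Under this plan both the homogeneous PDE and the initial condition emerge from a single computation, interpreted appropriately.

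First, for the PDE check, I would differentiate under the contour integral to obtain
\begin{align*}
(\partial_t - L) G(x,t;y,s)
 &= \frac{1}{2\pi\rmi}\int_{\mu-\rmi/2}^{\mu+\rmi/2} \rme^{\lambda t}\bigl[(\lambda + \partial_t - L)\widetilde G_\lambda(x,t;y,s)\bigr]\,\rmd\lambda \\
 &= \delta(x-y)\,\delta(t-s)\cdot \frac{1}{2\pi\rmi}\int_{\mu-\rmi/2}^{\mu+\rmi/2}\rme^{\lambda t}\,\rmd\lambda.
\end{align*}
For $t \in (s, s+2\pi)$ the delta factor vanishes, so $(\partial_t - L) G = 0$, confirming the homogeneous evolution over one period.

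For the initial condition $G(x,s;y,s) = \delta(x-y)$, I would integrate the preceding identity in $t$ across $t = s$. The left side produces the jump $G(x,s^+;y,s) - G(x,s^-;y,s)$ (the $L$-contribution is continuous and contributes nothing in the limit), while the right side produces $\delta(x-y)$ times an explicit contour factor. To identify this factor as $1$ and to show that $G(x,s^-;y,s) = 0$, I would push $\mu \to +\infty$ and use the large-$\lambda$ expansion $\widetilde G_\lambda(x,t;y,s) = \lambda^{-1}\delta(x-y)\delta_\mathrm{per}(t-s) + \rmO(|\lambda|^{-2})$, so that the leading piece of the integrand becomes $\lambda^{-1}\rme^{\lambda t}\delta_\mathrm{per}(t-s)$; the resulting contour integral evaluated at $t = s^+$ gives $1$ by a residue computation on the Floquet fundamental strip $\Im\lambda \in [-1/2, 1/2]$, while the subleading terms contribute negligibly.

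The main obstacle is the bookkeeping around the Floquet normalization. Because $\widetilde G_\lambda$ is $2\pi$-periodic in $t$, the source $\delta(t-s)$ in \eqref{eqs-reskernel} must be understood as the periodic delta $\sum_n \delta(t-s-2\pi n)$, and the contour $[\mu-\rmi/2,\mu+\rmi/2]$ of imaginary length $1$ is the correct fundamental domain, since the Floquet exponents are defined modulo $\rmi$ for time period $2\pi$. An alternative, possibly cleaner, route is to exploit the quasi-periodicity $\widetilde G_{\lambda + \rmi}(x,t;y,s) = \rme^{\rmi(s-t)}\widetilde G_\lambda(x,t;y,s)$ in order to close the contour into a large rectangle and invoke residue calculus at the Floquet eigenvalues, recasting $G$ as a Floquet-mode expansion whose uniqueness as the solution of \eqref{eqs-Gkernel} then closes the argument.
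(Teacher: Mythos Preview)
Your approach is a genuinely different route from the paper's. The paper proceeds \emph{constructively}: it Laplace-transforms the (unknown) Green's function $G$ in time to obtain $G_\lambda(x;y,s)=\int_s^\infty \rme^{-\lambda t}G\,\rmd t$, observes that time-periodicity of the coefficients couples $G_\lambda$ with all the shifts $G_{\lambda+ik}$, and then \emph{defines} $\widetilde G_\lambda:=\rme^{\lambda s}\sum_n \rme^{int}G_{\lambda+in}$, which it verifies satisfies the resolvent equation. The identity \eqref{ilt} is then simply the inverse Laplace transform rewritten over a fundamental strip, so neither the PDE nor the initial condition need separate verification; both come for free from Laplace inversion (for which the paper cites \cite{Arendt}). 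You instead take \eqref{ilt} as a definition and check the two Green's-function properties directly via the intertwining $(\partial_t-L)\rme^{\lambda t}=\rme^{\lambda t}(\lambda+\partial_t-L)$. This is a legitimate and conceptually transparent alternative.

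Two points in your sketch are underdeveloped, however. First, your displayed computation only yields the PDE on the single period $(s,s+2\pi)$; the periodic copies of the delta at $t=s+2\pi n$, $n\ge 1$, do \emph{not} drop out of the identity $(\partial_t-L)G=\delta(x-y)\,\delta_{\mathrm{per}}(t-s)\cdot\frac{1}{2\pi\rmi}\int_\Gamma \rme^{\lambda t}\,\rmd\lambda$ on their own, and cancelling them requires precisely the quasi-periodicity/contour-closing argument you postpone to the last paragraph. The paper's Laplace-transform construction handles this automatically. Second, the asserted large-$\lambda$ expansion $\widetilde G_\lambda=\lambda^{-1}\delta(x-y)\delta_{\mathrm{per}}(t-s)+\rmO(|\lambda|^{-2})$ is too singular to be correct: the resolvent kernel is a genuine function in $x$, with its leading large-$\lambda$ behaviour carried by the explicit piece $H_{\lambda,0}$ (see Lemma~\ref{lem-H0} and the high-frequency bounds in Proposition~\ref{prop-HFreskernel}). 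A cleaner way to recover the initial condition within your framework is to let $t\downarrow s$ directly in \eqref{ilt} using those high-frequency estimates, or simply to invoke Laplace inversion as the paper does.
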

\begin{proof}

Recall that the operator $L$ depends on time through $f_u(\bar u (x,t))$. As $\bar u(x,t)$ is $2\pi$-periodic in time, we can write 
$$ f_u(\bar{u}(x,t)) = \sum_{k\in \mathbb{Z}} \rme^{\rmi k t} a_k(x),$$
for some coefficients $a_k(x)$. For each fixed $y,s$, we set 
$$ G_\lambda (x; y,s) := \int_s^\infty \rme^{-\lambda t} G(x,t;y,s) \; dt ,$$
which is well-defined for all $\lambda\in \mathbb{C}$, with $\Re \lambda >\mu$, for some large constant $\mu>0$. It then follows that $G(x,t;y,s)$ solves \eqref{eqs-Gkernel} if and only if $G_\lambda(x;y,s)$ solves 
$$ (\lambda - D\partial_x^2 - c_\mathrm{d} \partial_x) G_\lambda (x; y,s) - \sum_{k\in \mathbb{Z}} a_k(x) G_{\lambda - \rmi k} (x;y,s)  = e^{-\lambda s} \delta(x-y) .$$
See \cite[Proposition~1.64, Theorem 3.1.3, and \S3.7]{Arendt} for similar arguments. Observe that $G_\lambda(x;y,s)$ couples with $G_{\lambda - \rmi k}(x;y,s)$ for all $k \in \mathbb{Z}$. Thus, to solve the resolvent equations, we are led to introduce 
\[
\widetilde{G}_{\lambda}(x,t;y,s) : = \rme^{\lambda s} \sum_{n\in \mathbb{Z}} e^{in t } G_{\lambda + \rmi n} (x;y,s),
\]
for $\lambda \in \mathbb{C}$. By definition, it suffices to consider $\lambda$ to be in the complex strip: $-\frac12<\mathrm{Im}\, \lambda \le \frac12.$
Observe that $\widetilde{G}_{\lambda}(x,t;y,s) $ is $2\pi$-periodic in time $t$ and satisfies
$$
\begin{aligned} (\lambda  + \partial_t - D\partial_x^2 - c_\mathrm{d} \partial_x) \widetilde{G}_{\lambda}(x,t;y,s) - \rme^{\lambda s}\sum_{n,k\in \mathbb{Z}}  \rme^{\rmi n t}a_k(x)G_{\lambda + \rmi (n-k)} (x;y,s)  =  \delta(t-s)\delta(x-y),
\end{aligned}$$
in which by definition we have $$\rme^{\lambda s}\sum_{n,k\in \mathbb{Z}}  \rme^{\rmi n t}a_k(x)G_{\lambda + \rmi (n-k)} (x;y,s) =  f_u(\bar{u}(x,t))  \widetilde{G}_{\lambda}(x,t;y,s) . $$
This proves that $\widetilde G_\lambda(x,t;y,s)$ solves the resolvent equation \eqref{eqs-reskernel}. The lemma follows, upon inverting the Laplace transform.
\end{proof}

In view of \eqref{ilt}, it suffices to construct the resolvent kernel $\widetilde{G}_{\lambda}(x,t;y,s)$, solving  \eqref{eqs-reskernel} for $\lambda$ such that $|\mathrm{Im}\, \lambda |\le \frac12$. To proceed, we first analyze the homogenous problem: 
\begin{equation}\label{hm-v0}(\lambda + \partial_t - L)v = 0.\end{equation}
Ignoring the fact that this is a parabolic PDE, the key idea is to write the equation as a spatial dynamical system; namely, we introduce 
$$V = \begin{pmatrix} v \\ v_x\end{pmatrix}.$$
The homogenous problem \eqref{hm-v0} becomes
\begin{equation}\label{eqs-Vn} V_x = A(x,\lambda) V, \qquad A(x,\lambda) := \begin{pmatrix} 0 & I \\ D^{-1}[\lambda + \partial_t - f_u(\bar{u}(x,t))] & - c_\mathrm{d}D^{-1} \end{pmatrix}\end{equation}
on the whole line: $x\in\mathbb{R}$. In the next section, we shall construct exponential dichotomies of the dynamical system \eqref{eqs-Vn}. The resolvent kernel $\widetilde{G}_{\lambda}(x,t;y,s)$, solving \eqref{eqs-reskernel}, will then be constructed in Section~\ref{s:resolvent} using the variations-of-constants principle derived in Section~\ref{s:sd}.


\section{Constructing exponential dichotomies}\label{s:sd}

Throughout this section, we assume that Hypotheses~\ref{h:so}, \ref{h:wt}, and \ref{h:ss} are met.

\subsection{Spatial dynamics}\label{s:ed0}

We consider the spatial-dynamics system \eqref{eqs-Vn} given by
\begin{equation}\label{eqs-V}
V_x = A(x,\lambda) V, \qquad A(x,\lambda) := \begin{pmatrix} 0 & I \\ D^{-1}(\lambda + \partial_t - f_u(\bar{u}(x,t))) & - c_\mathrm{d} D^{-1} \end{pmatrix}
\end{equation}
on the Hilbert space 
\begin{equation}\label{spaceYY}
Y  = H^{\frac54}(\mathbb{T},\mathbb{C}^{2n})\times H^{\frac34}(\mathbb{T},\mathbb{C}^{2n}), \qquad \mathbb{T} = [0,2\pi]/\!\sim.
\end{equation}
We denote the norm in $Y$ by $\|\cdot\|_Y$ and record that each element $V$ is $2\pi$-periodic in time and lies in $L^\infty(\mathbb{T})\times L^\infty(\mathbb{T})$. We introduce the linear isomorphism
\[
\mathcal{J}:\quad H^1(\mathbb{T},\mathbb{C}^{2n})\longrightarrow L^2(\mathbb{T},\mathbb{C}^{2n})),\quad v(t)=\sum_{k\in\mathbb{Z}} \hat{v}_k \rme^{\rmi kt} \longmapsto (\mathcal{J}v)(t):=\sum_{k\in\mathbb{Z}} (1+|k|)\hat{v}_k \rme^{\rmi kt}
\]
which allows us to write the scalar product on $Y$ as
\begin{equation}\label{norm-Y}
\langle W,V \rangle_Y
= \left\langle \begin{pmatrix} w_1 \\ w_2 \end{pmatrix}, \begin{pmatrix} v_1 \\ v_2 \end{pmatrix} \right\rangle_Y
= \langle \mathcal{J}^{\frac54}w_1,\mathcal{J}^{\frac54}v_1\rangle_{L^2} + \langle \mathcal{J}^{\frac34}w_2,\mathcal{J}^{\frac34}v_2\rangle_{L^2},
\end{equation}
where we use the scalar product $\langle w,v \rangle_{\mathbb{C}^{2n}}:=\bar{w}^t v$ in $\mathbb{C}^{2n}$. Using this scalar product, we can identify $Y$ with its dual space, and it follows that the system adjoint to \eqref{eqs-V} is again posed on $Y$ and given by
\begin{equation}\label{eqs-Vadj}
W_x = -A(x,\lambda)^* W = - \begin{pmatrix} 0 & \mathcal{J}^{-\frac52}(\bar{\lambda} - \partial_t - f_u(\bar{u}(x,t))^t) D^{-1} \mathcal{J}^{\frac32} \\ \mathcal{J} & - c_\mathrm{d} D^{-1} \end{pmatrix} W;
\end{equation}
see \cite[\S6.2]{SS01} for a similar argument. Note that $A(x,\lambda)^*$ depends analytically on $\bar{\lambda}$.

When $\lambda=0$, the system \eqref{eqs-V} admits the nonzero, linearly independent, bounded solutions
\begin{equation}\label{eV12}
V_1(x) = \begin{pmatrix} \bar{u}_x(x,\cdot) \\ \bar{u}_{xx}(x,\cdot) \end{pmatrix}, \qquad
V_2(x) = \begin{pmatrix} \bar{u}_t(x,\cdot) \\ \bar{u}_{xt}(x,\cdot) \end{pmatrix},
\end{equation}
in $Y$, where $\bar{u}_x$ and $\bar{u}_t$ are the $2\pi$-periodic solutions of \eqref{eqs-lin} introduced in \S\ref{s:prelim}. Similarly, the solutions $\psi_1(x,t)$ and $\psi_2(x,t)$ of the adjoint equation \eqref{eadj1} introduced in \S\ref{s:prelim} generate the nonzero, bounded, linearly independent solutions
\begin{equation}\label{eW12}
W_j(x) = \begin{pmatrix} \mathcal{J}^{-\frac52}(c\psi_j(x,\cdot)-D\partial_x \psi_j(x,\cdot)) \\ \mathcal{J}^{-\frac32}D\psi_j(x,\cdot) \end{pmatrix}, \qquad j=1,2
\end{equation}
of the adjoint system \eqref{eqs-Vadj} at $\lambda=0$ in $Y$. Note that \eqref{eadj2} implies that there are positive constants $C,\eta$ so that
\begin{equation}\label{eW12ed}
|W_j(x)|_Y \leq C \rme^{-\eta |x|}, \qquad x\in\mathbb{R}
\end{equation}
for $j=1,2$. Furthermore, it is easy to check that $\langle W(x),V(x)\rangle_Y$ does not depend on $x$ whenever $W(x)$ satisfies \eqref{eqs-Vadj} and $V(x)$ satisfies \eqref{eqs-V}: we therefore conclude from \eqref{eW12ed} and boundedness of $V_j(x)$ that
\begin{equation}\label{def-W12}
\langle W_i(x), V_j(x) \rangle_Y = 0
\end{equation}
for all $x\in\mathbb{R}$ and each $i,j=1,2$.

\subsection{Exponential dichotomies}\label{s:ed1}

As alluded to earlier, the initial-value problem associated with \eqref{eqs-V} on $Y$ is not well-posed. However, we can solve this system on complementary subspaces either forward or backward in $x$. The following definition encodes this property: we remark that we are typically interested in the case $\kappa^s<0<\kappa^u$ (see Definition~\ref{def-expPhi} below), which guarantees that solutions decay exponentially in the forward or backward $x$-direction.

\begin{definition}[Exponential Dichotomy] \label{def-expPhi} Let $J = \mathbb{R}_+, \mathbb{R}_-$, or $\mathbb{R}$. System \eqref{eqs-V} is said to have an exponential dichotomy on $J$ if there exist constants $K$ and $\kappa^s<\kappa^u$, and two strongly continuous families of bounded operators $\Phi^\mathrm{s}(x,y)$ and $\Phi^\mathrm{u}(x,y)$ on $Y$, defined respectively for $x\ge y$ and $x\le y$, such that $V(x)=\Phi^\mathrm{s}(x,y)V_0$ and $V(x)=\Phi^\mathrm{s}(x,y)V_0$ are solutions of \eqref{eqs-V} for $x>y$ and $x<y$, respectively, for each $V_0\in Y$, the operators $P^\mathrm{s}(x) :=\Phi^\mathrm{s}(x,x)$ and $P^\mathrm{u}(x): = \Phi^\mathrm{u}(x,x)$ are bounded complementary projections in $Y$ for all $x \in J$, and
\[
\sup_{x,y\in J:\, x\geq y} \rme^{-\kappa^s |x-y|} \| \Phi^\mathrm{s}(x,y)\|_{L(Y)} + \sup_{x,y\in J:\, x\leq y} \rme^{\kappa^u |x-y|} \|\Phi^\mathrm{u} (x,y)\|_{L(Y)} \le K,
\]
where $\|\cdot \|_{L(Y)}$ denotes the operator norm of bounded operators from $Y$ to $Y$.
\end{definition}

We emphasize that, if $\Phi^\mathrm{s,u}(x,y)$ define an exponential dichotomy for \eqref{eqs-V} on $J$ with rates $\kappa^s<\kappa^u$, then $\Phi^\mathrm{s}_*(x,y):=\Phi^\mathrm{u}(y,x)^*$ and $\Phi^\mathrm{u}_*(x,y):=\Phi^\mathrm{s}(y,x)^*$ define an exponential dichotomy for \eqref{eqs-Vadj} on $J$ with rates $\kappa^s_\mathrm{ad}:=-\kappa^u<-\kappa^s=:\kappa^u_\mathrm{ad}$; see \cite[Lemma~5.2]{SS01}.

Exponential dichotomies can be used to construct bounded solutions to inhomogeneous systems: the following lemma will be used to later to construct the resolvent kernel.

\begin{lemma}[\cite{PSS}]\label{lem-PhiSolver}
Fix $\lambda\in\mathbb{C}$ and assume that \eqref{eqs-V} has an exponential dichotomy given by $\Phi^\mathrm{s,u}(x,y)$ on $\mathbb{R}$ with constants that satisfy $\kappa^s<0<\kappa^u$. For each $F\in C^0(\mathbb{R};Y)$, the system
\[
V_x = A(x,\lambda) V + F(x)
\]
then has a unique bounded solution, and this solution is given by the variation-of-constants formula
\[
V(x) = \int_{-\infty}^x \Phi^\mathrm{s}(x,y) F(y)\; \rmd y  + \int_\infty^x \Phi^\mathrm{u} (x,y) F(y)\; \rmd y, \qquad x\in\mathbb{R}.
\]
\end{lemma}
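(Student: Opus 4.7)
The plan is to establish existence and boundedness by direct verification of the formula, then uniqueness by a standard dichotomy argument.

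For existence, I would first check absolute convergence of each integral in $Y$. The dichotomy estimates give $\|\Phi^{\mathrm{s}}(x,y)\|_{L(Y)} \le K \rme^{\kappa^s(x-y)}$ for $x\ge y$ and $\|\Phi^{\mathrm{u}}(x,y)\|_{L(Y)} \le K \rme^{\kappa^u(x-y)}$ for $x\le y$, so with $\kappa^s<0<\kappa^u$ both integrands decay exponentially as $|y|\to\infty$, yielding the uniform bound
\[
|V(x)|_Y \;\le\; K\|F\|_\infty\left(\frac{1}{|\kappa^s|}+\frac{1}{\kappa^u}\right).
\]
To check that $V$ solves the inhomogeneous equation, I differentiate in $x$ via Leibniz. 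The boundary contributions at $y=x$ combine to $\Phi^{\mathrm{s}}(x,x)F(x)+\Phi^{\mathrm{u}}(x,x)F(x)=(P^{\mathrm{s}}(x)+P^{\mathrm{u}}(x))F(x)=F(x)$ by complementarity of the projections, and the remaining integrals yield $A(x,\lambda)V(x)$ after using that, for each fixed $y$, $x\mapsto \Phi^{\mathrm{s,u}}(x,y)F(y)$ solves \eqref{eqs-V}.

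For uniqueness, suppose $V_1$ and $V_2$ are both bounded solutions; then $W:=V_1-V_2$ is a bounded solution of the homogeneous equation $W_x=A(x,\lambda)W$. Split $W(x)=P^{\mathrm{s}}(x)W(x)+P^{\mathrm{u}}(x)W(x)$. Since the dichotomy subspaces are invariant under the evolution, $P^{\mathrm{s}}(x)W(x)=\Phi^{\mathrm{s}}(x,y)P^{\mathrm{s}}(y)W(y)$ for $x\ge y$, so that
\[
|P^{\mathrm{s}}(x)W(x)|_Y \;\le\; K\rme^{\kappa^s(x-y)}\,\|P^{\mathrm{s}}\|_\infty\|W\|_\infty \;\xrightarrow[y\to-\infty]{}\; 0
\]
using $\kappa^s<0$, so $P^{\mathrm{s}}(x)W(x)\equiv 0$. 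Symmetrically, $P^{\mathrm{u}}(x)W(x)=\Phi^{\mathrm{u}}(x,y)P^{\mathrm{u}}(y)W(y)$ for $x\le y$, and sending $y\to+\infty$ gives $P^{\mathrm{u}}(x)W(x)\equiv 0$. Hence $W\equiv 0$.

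The main obstacle I anticipate is rigorously justifying the differentiation inside the integral, because $\Phi^{\mathrm{s,u}}(x,y)$ is only strongly continuous rather than norm-differentiable on $Y$, and $A(x,\lambda)$ is an unbounded operator whose domain is a proper subspace of $Y$. I would handle this by first proving the formula for $F\in C^1_c(\mathbb{R};Y)$, where after integration by parts in $y$ the derivatives transfer onto $F$ and the $x$-differentiation becomes a statement about the evolution family alone; a density/approximation argument then extends the identity to all $F\in C^0(\mathbb{R};Y)$ with the output interpreted as a mild solution. Everything else is then bookkeeping with the dichotomy estimates.
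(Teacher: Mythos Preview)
The paper does not give its own proof of this lemma; it is simply attributed to \cite{PSS}. So there is no in-paper argument to compare against.

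Your outline is the standard variation-of-constants proof and is correct in substance. A few remarks:

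\begin{itemize}
\item You tacitly assume $F$ is bounded (you use $\|F\|_\infty$). The lemma as stated only says $F\in C^0(\mathbb{R};Y)$, but boundedness is clearly intended and is what is used when the lemma is applied later in the paper; without it the integrals need not converge.
\item For uniqueness you write $P^{\mathrm{s}}(x)W(x)=\Phi^{\mathrm{s}}(x,y)P^{\mathrm{s}}(y)W(y)$ and appeal to ``invariance of the dichotomy subspaces under the evolution.'' In this ill-posed setting there is no two-sided evolution operator, so this identity is not a consequence of invariance in the usual ODE sense; rather, it is part of the \emph{definition} of a (mild) solution in the exponential-dichotomy framework of \cite{PSS}. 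Once one adopts that definition your argument goes through verbatim. Alternatively, one can argue directly: a bounded solution on $[y,\infty)$ must lie in $\Rg P^{\mathrm{s}}(y)$ (else its unstable component would blow up forward), and a bounded solution on $(-\infty,y]$ must lie in $\Rg P^{\mathrm{u}}(y)$; complementarity then forces $W(y)=0$.
\item Your identification of the differentiation-under-the-integral issue is accurate, and the density/mild-solution workaround you propose is exactly how it is handled in \cite{PSS}.
\end{itemize}

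In short: your proof is the standard one from the cited reference, correct modulo the mild-solution bookkeeping you already flag.
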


We now comment on cases when \eqref{eqs-V} has exponential dichotomies.

It was shown in \cite[Corollary~A.2]{SS04a} that \eqref{eqs-V} has an exponential dichotomy on $\mathbb{R}_+$ with $\kappa^s<0<\kappa^u$ if, and only if, the system
\[
V_x = \begin{pmatrix} 0 & I \\ D^{-1}(\lambda + \partial_t - f_u(\bar{u}_\mathrm{wt}(k_+x-t;k_+))) & - c_\mathrm{d} D^{-1} \end{pmatrix}
\]
for the asymptotic wave train at $x=\infty$ has an exponential dichotomy on $\mathbb{R}$ with $\kappa^s<0<\kappa^u$ or, equivalently, when $\lambda$ does not belong to the Floquet spectrum of the asymptotic wave train; the same statement holds for exponential dichotomies on $\mathbb{R}_-$ upon using the asymptotic wave train at $x=-\infty$. It follows from \cite[\S4.2]{SS04a} that \eqref{eqs-V} has an exponential dichotomy on $\mathbb{R}$ with $\kappa^s<0<\kappa^u$ if, and only if, it has exponential dichotomies $\Phi^\mathrm{s,u}_\pm(x,y)$ on $\mathbb{R}_\pm$ with $\kappa^s<0<\kappa^u$ that satisfy $\Rg(P^\mathrm{s}_+(0))\oplus\Rg(P^\mathrm{u}_-(0))=Y$.

Hypotheses~\ref{h:so}, \ref{h:wt}, and~\ref{h:ss} together with \cite[Corollary~A.2]{SS04a} imply that \eqref{eqs-V} has an exponential dichotomy on $J=\mathbb{R}$ with $\kappa^s<0<\kappa^u$ for each $\lambda$ with $\Re\lambda\geq0$ except at $\lambda=0$. Furthermore, these dichotomies are analytic in $\lambda$ in the right half-plane. There are two ways in which the existence of exponential dichotomies with $\kappa^s<0<\kappa^u$ breaks down at $\lambda=0$: Firstly, Hypothesis~\ref{h:ss} implies that the nonzero functions $V_{1,2}(x)$ given in \eqref{eV12} satisfy \eqref{eqs-V} at $\lambda=0$, which precludes the existence of exponential dichotomies on $\mathbb{R}$ for any $\kappa^s<\kappa^u$. Secondly, Lemma~\ref{lem-ss} describes the Floquet spectra of the asymptotic wave trains and, in particular, shows that they both contain $\lambda=0$, thus precluding the existence of exponential dichotomies on $\mathbb{R}_\pm$ with $\kappa^s<0<\kappa^u$ based on the criteria from \cite[Corollary~A.2]{SS04a} we just reviewed.

In the next section, we will show that we can define exponential dichotomies for \eqref{eqs-V} on $\mathrm{R}_\pm$ for certain rates $\kappa^s<\kappa^u$ for each $\lambda$ near zero, and we will also show that these dichotomies can be constructed in such a way that they depend analytically on $\lambda$.

\subsection{Constructing analytic exponential dichotomies on $\mathbb{R}_\pm$ near $\lambda=0$}\label{s:ed2}

\begin{figure}[b]
\centering
\includegraphics[scale=0.7]{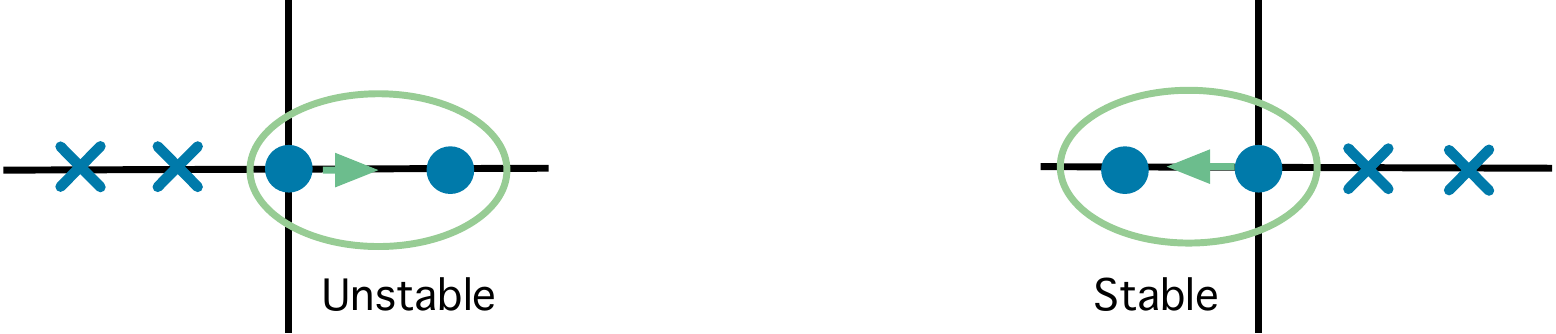}
\caption{Shown is an illustration of the spatial Floquet exponents of the asymptotic wave trains at $x=-\infty$ (left) and $x=\infty$ (right) at $\lambda=0$. The spatial Floquet exponents admit the expansion $\nu_\pm(\lambda)=-\lambda/c_\pm+\rmO(\lambda^2)$ with $c_-<0<c_+$ and therefore move in the direction indicated by the arrows as $\lambda$ moves into the open right half-plane.}
\label{fig-spatialODE}
\end{figure}

We begin by considering the asymptotic systems
\begin{equation}\label{eqs-wt}
V_x = A_\pm(x,\lambda) V, \qquad
A(x,\lambda) := \begin{pmatrix} 0 & I \\ D^{-1}(\lambda + \partial_t - f_u(u_\mathrm{wt}(k_\pm x-t))) & - c_\mathrm{d} D^{-1} \end{pmatrix}
\end{equation}
associated with \eqref{eqs-V}, where $u_\mathrm{wt}(k_\pm x-t)$ denotes the asymptotic wave trains that the defect $\bar{u}(x,t)$ converges to as $x\to\pm\infty$. Note that these systems are periodic in $x$, and the \cite{SS04a,SS08} imply that they have well-defined Floquet exponents. We now summarize some of the consequences of the results in \cite[\S3.4 and \S4]{SS04a} in conjunction with Lemma~\ref{lem-ss}. Firsty, the Floquet exponents of \eqref{eqs-wt} are uniformly bounded away from the imaginary axis for $\Re\lambda\geq0$ except near $\lambda=0$. For $\lambda=0$, each of the systems \eqref{eqs-wt}$_\pm$ has precisely one Floquet exponent at the origin, with the remaining Floquet exponents being uniformly bounded away from the imaginary axis. The Floquet exponent of \eqref{eqs-wt}$_\pm$ that lies at the origin for $\lambda=0$ varies analytically in $\lambda$ and has the expansion
\begin{equation}\label{def-0eigenvalues}
\nu_\pm(\lambda) = - \frac{\lambda}{c_\pm} + \rmO(\lambda^2),
\end{equation}
where the constants $c_\pm$  are the group velocities of the asymptotic wave trains, which satisfy the inequality $c_-<0<c_+$ due to Hypothesis~\ref{h:wt} (see also Figure~\ref{fig-spatialODE}). We also note that the adjoint systems belonging to \eqref{eqs-wt}, which are given by \eqref{eqs-Vadj} with $\bar{u}$ replaced by $u_\mathrm{wt}$, admit unique Floquet exponents $-\overline{\nu_\pm(\lambda)}=-\nu_\pm(\bar{\lambda})$ near $\lambda=0$.

Next, we define the two subspaces
\[
E_0^\mathrm{pt} := \mathrm{span}\{V_1(0),V_2(0)\}, \qquad
E_0^\mathrm{ad} := \mathrm{span}\{W_1(0),W_2(0)\}
\]
of $Y$ and note that $E_0^\mathrm{pt}\perp E_0^\mathrm{ad}$ due to \eqref{def-W12}. We can now state the following result on the existence of exponential dichotomies of \eqref{eqs-V} on $\mathbb{R}_\pm$ for $\lambda$ near zero.

\begin{lemma}\label{lem-Phipm}
Assume that Hypotheses~\ref{h:so}, \ref{h:wt}, and \ref{h:ss} are met. There are then rates $0<\kappa^s_+<\kappa^u_+$ such that the system \eqref{eqs-V} has an exponential dichotomy $\Phi^\mathrm{s,u}_+(x,y,\lambda)$ on $\mathbb{R}_+$ for all $\lambda$ near zero. The operators $\Phi^\mathrm{s,u}_+(x,y,\lambda)$ depend analytically on $\lambda$, and there is a constant $\eta>0$ and solutions $V^c_+(x,\lambda)$ of \eqref{eqs-V} and $W^c_+(x,\lambda)$ of \eqref{eqs-Vadj} that are analytic in $\lambda$ and $\bar{\lambda}$, respectively, so that
\begin{eqnarray*}
\Phi_+^\mathrm{s}(x,y,\lambda) & = & \rme^{\nu_+^c(\lambda)(x-y)} V_+^c(x,\lambda) \langle W_+^c(y,\lambda),\cdot\rangle_Y + \rmO(\rme^{-\eta |x-y|}), \qquad x\geq y\geq0 \\
\Phi_+^\mathrm{u}(x,y,\lambda) & = & \rmO(\rme^{-\eta |x-y|}), \qquad y\geq x\geq0,
\end{eqnarray*}
where the $\rmO(\cdot)$ terms are bounded operators that are analytic in $\lambda$.

Similarly, there are rates $\kappa^s_-<\kappa^u_-<0$ such that the system \eqref{eqs-V} has an exponential dichotomy $\Phi^\mathrm{s,u}_-(x,y,\lambda)$ on $\mathbb{R}_-$ for all $\lambda$ near zero, and we have the expansion
\begin{eqnarray*}
\Phi_-^\mathrm{u}(x,y,\lambda) & = & \rme^{\nu_-^c(\lambda)(x-y)} V_-^c(x,\lambda) \langle W_-^c(y,\lambda),\cdot\rangle_Y + \rmO(\rme^{-\eta |x-y|}), \qquad x\leq y\leq0 \\
\Phi_-^\mathrm{s}(x,y,\lambda) & = & \rmO(\rme^{-\eta |x-y|}), \qquad y\leq x\leq0,
\end{eqnarray*}
where all terms have analyticity properties analogous to those of the dichotomies on $\mathbb{R}_+$. 

Finally, $V^c_\pm(x,\lambda)$ can be chosen so that $V^c_\pm(0,0)\in E_0^\mathrm{pt}$. Furthermore, the exponential dichotomies can be chosen so that there are closed subspaces $E_0^\mathrm{s,u}$ of $Y$ with
\begin{eqnarray}\label{e1}
\Rg(\Phi^\mathrm{s}_+(0,0,0)) = E_0^\mathrm{pt}\oplus E_0^\mathrm{s},
& \quad &
\Rg(\Phi^\mathrm{u}_-(0,0,0)) = E_0^\mathrm{pt}\oplus E_0^\mathrm{u},
\\ \nonumber
\Rg(\Phi^\mathrm{u}_+(0,0,0)) = E_0^\mathrm{u} \oplus E_0^\mathrm{ad},
& \quad &
\Rg(\Phi^\mathrm{s}_-(0,0,0)) = E_0^\mathrm{s} \oplus E_0^\mathrm{ad},
\end{eqnarray}
where $E_0^\mathrm{pt}\oplus E_0^\mathrm{s}\oplus E_0^\mathrm{u}\oplus E_0^\mathrm{ad}=Y$.
\end{lemma}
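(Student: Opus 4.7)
My plan is to produce the dichotomies in two stages: first for the asymptotic $x$-periodic reference systems \eqref{eqs-wt}$_\pm$ by analytic Floquet theory on the monodromy operator, and then for \eqref{eqs-V} on $\mathbb{R}_\pm$ by the standard roughness theorem for exponential dichotomies, with Lemma~\ref{lem-defect} supplying the exponentially small, $\lambda$-analytic perturbation $A(x,\lambda)-A_\pm(x,\lambda)=\rmO(\rme^{-\eta|x|})$. First I shrink the disk about $\lambda=0$ so that in it each of \eqref{eqs-wt}$_\pm$ has exactly one Floquet exponent $\nu_\pm(\lambda)=-\lambda/c_\pm+\rmO(\lambda^2)$ within distance $\eta$ of the imaginary axis, with all remaining exponents at distance $>\eta$; I then fix rates $0<\kappa_+^\mathrm{s}<\kappa_+^\mathrm{u}<\eta$ and $-\eta<\kappa_-^\mathrm{s}<\kappa_-^\mathrm{u}<0$ lying strictly inside the corresponding spectral gaps---above the center mode on the $+$ side, below it on the $-$ side. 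Kato's analytic perturbation theory applied to the monodromy of \eqref{eqs-wt}$_\pm$ then produces an analytic rank-one spectral projection onto the simple Floquet multiplier $\rme^{2\pi\nu_\pm(\lambda)/k_\pm}$, with range and corange spanned respectively by analytic Floquet eigenfunctions $V_\pm^{c,\infty}(x,\lambda)$ and $W_\pm^{c,\infty}(x,\lambda)$ in $Y$; together with the complementary spectral projections onto the truly hyperbolic modes this yields analytic exponential dichotomies for \eqref{eqs-wt}$_\pm$ on $\mathbb{R}$ with the chosen rates.

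\textbf{Transfer and expansion.} I then invoke the roughness theorem for exponential dichotomies in ill-posed spatial dynamics, established in the form I need in \cite{PSS} and used throughout \cite[\S4]{SS04a} and \cite[\S5]{SS01}: an analytic, exponentially small perturbation of $A_\pm(x,\lambda)$ preserves the dichotomy and its rates and modifies the projections only by an $\rmO(\rme^{-\eta|x|})$ analytic correction. This delivers the desired analytic families $\Phi_\pm^\mathrm{s,u}(x,y,\lambda)$ on $\mathbb{R}_\pm$. The rank-one center subbundle of the asymptotic dichotomy persists as a rank-one analytic subbundle of the stable subspace on $\mathbb{R}_+$ (respectively of the unstable subspace on $\mathbb{R}_-$), spanned by a solution $V_\pm^c(x,\lambda)$ of \eqref{eqs-V} whose spatial behavior at $\pm\infty$ matches $\rme^{\nu_\pm(\lambda)x}V_\pm^{c,\infty}(x,\lambda)$, paired with a dual $W_\pm^c(x,\lambda)$ coming from the analytic adjoint projection. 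Splitting $\Phi_+^\mathrm{s}$ into this rank-one piece---precisely $\rme^{\nu_+(\lambda)(x-y)}V_+^c(x,\lambda)\langle W_+^c(y,\lambda),\cdot\rangle_Y$---plus its restriction to the truly stable complement, which decays at rate $\eta>\kappa_+^\mathrm{u}$, recovers the stated expansion, and the analogous splitting gives that for $\Phi_-^\mathrm{u}$. The ``off-diagonal'' dichotomies $\Phi_+^\mathrm{u}$ and $\Phi_-^\mathrm{s}$ see no near-zero mode at all and are therefore purely $\rmO(\rme^{-\eta|x-y|})$.

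\textbf{Normalization, ranges, and main obstacle.} The rank-one center subbundle is spectrally characterized as ``non-decaying at $\pm\infty$'', so I retain the freedom to modify $V_\pm^c(x,\lambda)$ by an analytic family of truly decaying solutions, which merely moves mass into the $\rmO(\rme^{-\eta|x-y|})$ remainder. At $\lambda=0$, Lemma~\ref{lem-defect} together with $\bar u_x\to k_+\partial_\theta u_\mathrm{wt}(k_+x-t;k_+)$ and $\bar u_t\to-\partial_\theta u_\mathrm{wt}(k_+x-t;k_+)$ as $x\to+\infty$ shows that $V_1(0)+k_+V_2(0)\in E_0^\mathrm{pt}$ decays exponentially at $+\infty$ and hence lies in the truly decaying stable part, while the transverse direction in $E_0^\mathrm{pt}$ is bounded but non-decaying and thus represents the center subbundle modulo truly decaying solutions; normalizing $V_+^c(0,0)$ to equal this transverse representative places $V_+^c(0,0)\in E_0^\mathrm{pt}\setminus\{0\}$ and extends analytically in $\lambda$. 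Setting $E_0^\mathrm{s}$ to be a closed complement of $\mathrm{span}\{V_1(0)+k_+V_2(0)\}$ inside the truly decaying stable subspace then gives $\Rg(\Phi_+^\mathrm{s}(0,0,0))=E_0^\mathrm{pt}\oplus E_0^\mathrm{s}$. For the dual identity, the $x$-independence of the pairing $\langle W_j(x),V(x)\rangle_Y$ for $V$ a solution of \eqref{eqs-V}, combined with $|V(x)|_Y\lesssim\rme^{\kappa_+^\mathrm{s} x}$ for $V\in\Rg\Phi_+^\mathrm{s}(0,0,0)$, $|W_j(x)|_Y\lesssim\rme^{-\eta|x|}$ from \eqref{eW12ed}, and $\kappa_+^\mathrm{s}<\eta$, yields $W_j(0)\perp\Rg\Phi_+^\mathrm{s}(0,0,0)$ in $Y$, so $E_0^\mathrm{ad}$ is transverse to $\Rg\Phi_+^\mathrm{s}(0,0,0)$; exercising the residual freedom in the choice of complementary unstable projection at the boundary point $y=0$ of $\mathbb{R}_+$ I embed $E_0^\mathrm{ad}$ into $\Rg\Phi_+^\mathrm{u}(0,0,0)$ and define $E_0^\mathrm{u}$ as a closed complement, obtaining $\Rg(\Phi_+^\mathrm{u}(0,0,0))=E_0^\mathrm{u}\oplus E_0^\mathrm{ad}$. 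The $-$-side statements follow by the symmetric argument. I expect the hardest step to be the simultaneous preservation of analyticity in $\lambda$, of the strict rate separation between $\nu_\pm(\lambda)$ and the truly hyperbolic Floquet modes through the roughness argument, and of the normalization $V_\pm^c(0,0)\in E_0^\mathrm{pt}$ without spoiling the leading expansion; the required bookkeeping closely parallels the time-periodic Lax-shock construction of \cite{BSZ}, the only substantive novelty here being that the asymptotic reference states are wave trains rather than constants, which is precisely why the Floquet setup of \eqref{eqs-wt}$_\pm$ rather than a constant-coefficient resolvent analysis is essential.
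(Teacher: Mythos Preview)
Your overall strategy is correct and matches the paper's approach closely: both construct dichotomies for the asymptotic periodic systems, transfer them to $\mathbb{R}_\pm$ via roughness/exponential-weight arguments, split off the rank-one center mode (what you do via Kato theory on the monodromy is essentially the paper's use of the gap lemma \cite{GZ,KS}), and then exploit the freedom in the complementary projection at $x=0$. The expansions and the normalization $V^c_\pm(0,0)\in E_0^{\mathrm{pt}}$ are handled correctly.

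There is, however, a genuine gap in your treatment of the four range identities \eqref{e1}. You define $E_0^{\mathrm{s}}$ as a complement inside the strongly stable part of $\Rg\Phi_+^{\mathrm{s}}(0,0,0)$ and $E_0^{\mathrm{u}}$ as a complement of $E_0^{\mathrm{ad}}$ inside $\Rg\Phi_+^{\mathrm{u}}(0,0,0)$, both on the $+$ side, and then say ``the $-$-side statements follow by the symmetric argument.'' But the symmetric argument on $\mathbb{R}_-$ would produce \emph{different} subspaces $\tilde E_0^{\mathrm{u}}\subset\Rg\Phi_-^{\mathrm{u}}(0,0,0)$ and $\tilde E_0^{\mathrm{s}}\subset\Rg\Phi_-^{\mathrm{s}}(0,0,0)$, and nothing in your argument forces $\tilde E_0^{\mathrm{u}}=E_0^{\mathrm{u}}$ or $\tilde E_0^{\mathrm{s}}=E_0^{\mathrm{s}}$. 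The subspaces $\Rg\Phi_+^{\mathrm{s}}(0,0,0)$ and $\Rg\Phi_-^{\mathrm{u}}(0,0,0)$ are \emph{uniquely} determined by the dynamics, so $E_0^{\mathrm{s}}$ must sit inside the first and $E_0^{\mathrm{u}}$ inside the second; only the complementary ranges $\Rg\Phi_+^{\mathrm{u}}$ and $\Rg\Phi_-^{\mathrm{s}}$ are free. To make a single pair $(E_0^{\mathrm{s}},E_0^{\mathrm{u}})$ work in all four identities you need two global facts that you never invoke: (i) $\Rg\Phi_+^{\mathrm{s}}(0,0,0)\cap\Rg\Phi_-^{\mathrm{u}}(0,0,0)=E_0^{\mathrm{pt}}$, which is precisely Hypothesis~\ref{h:ss}, and (ii) $\big(\Rg\Phi_+^{\mathrm{s}}(0,0,0)+\Rg\Phi_-^{\mathrm{u}}(0,0,0)\big)^\perp=E_0^{\mathrm{ad}}$, which comes from the Fredholm properties in \cite[Lemma~4.2]{SS04a} and the adjoint characterization in \cite[Lemma~6.1]{SS01}. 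With these in hand one defines $E_0^{\mathrm{s}},E_0^{\mathrm{u}}$ so that $\Rg\Phi_+^{\mathrm{s}}=E_0^{\mathrm{pt}}\oplus E_0^{\mathrm{s}}$ and $\Rg\Phi_-^{\mathrm{u}}=E_0^{\mathrm{pt}}\oplus E_0^{\mathrm{u}}$, obtains $Y=E_0^{\mathrm{pt}}\oplus E_0^{\mathrm{s}}\oplus E_0^{\mathrm{u}}\oplus E_0^{\mathrm{ad}}$, and only then uses the freedom in the complementary projections (via \cite[(3.20)]{PSS}) to set $\Rg\Phi_+^{\mathrm{u}}=E_0^{\mathrm{u}}\oplus E_0^{\mathrm{ad}}$ and $\Rg\Phi_-^{\mathrm{s}}=E_0^{\mathrm{s}}\oplus E_0^{\mathrm{ad}}$. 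Your argument as written never ties the two half-lines together through Hypothesis~\ref{h:ss}, so the compatibility of the four identities is unproven.
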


\begin{proof}
The proof is similar to the proofs in \cite[Lemma~4]{BSZ} and \cite{Hupkes}, and we therefore provide only a brief summary of the strategy of the proof. Using relative Morse indices and our hypotheses, we can proceed as \cite{SS04a,SS08} to use exponential weights to conclude the existence of analytic exponential dichotomies that belong to the rates $0<\kappa^s_+<\kappa^u_+$, vary analytically in $\lambda$ near $\lambda=0$, and satisfy \eqref{e1}. It therefore remains to derive the expansion for $\Phi^\mathrm{s}_+(x,y,\lambda)$ and to prove the the last part of the lemma.

Recall that the asymptotic system at $x=\infty$ has a unique Floquet exponent at the origin for $\lambda=0$. Lemma~\ref{lem-defect} implies that $\|A(x,\lambda)-A_\pm(x,\lambda)\|_{L(Y)}$ converges to zero exponentially as $x\to\pm\infty$ with rate independent of $\lambda$. We can therefore use the gap lemma \cite{GZ,KS,SS04a,SS08,BSZ} to construct a solution $V^c_+(x,\lambda)$ of \eqref{eqs-V} that satisfies $V^c_+(0,0)\in E_0^\mathrm{pt}$, is analytic in $\lambda$, and converges exponentially to a nonzero Floquet eigenfunction of the asymptotic system \eqref{eqs-wt}$_+$ belonging to the Floquet exponent $\nu_\pm(\lambda)$ for each $\lambda$.

Similarly, we can construct a solution $W^c_+(x,\lambda)$ of \eqref{eqs-Vadj} that converges exponentially to a nonzero Floquet eigenfunction belonging to the Floquet exponent $-\nu_+(\bar{\lambda})$ of the system adjoint to asymptotic system \eqref{eqs-wt}$_+$ as $x\to\infty$, is analytic in $\bar{\lambda}$, and satisfies $W^c_+(0,\lambda)\perp \Rg(\Phi^\mathrm{u}_+(0,0,\lambda)$ for each $\lambda$ near zero.

Using exponential weights $-1\ll\tilde{\kappa}^s_+<\tilde{\kappa}^u_+<0$ that separate the Floquet exponent $\nu_+(\lambda)$ from the remaining stable Floquet exponents of \eqref{eqs-wt}, we can then construct analytic strong stable dichotomies $\Phi^\mathrm{ss}_+(x,y,\lambda)$ of \eqref{eqs-V} on $\mathbb{R}_+$. In summary, we arrive at the decomposition
\begin{equation}\label{e2}
\Phi_+^\mathrm{s}(x,y,\lambda) = \rme^{\nu_+^c(\lambda)(x-y)} V_+^c(x,\lambda) \langle W_+^c(y,\lambda),\cdot\rangle_Y + \Phi_+^\mathrm{ss}(x,y,\lambda), \qquad x\geq y\geq0
\end{equation}
with $\|\Phi_+^\mathrm{ss}(x,y,\lambda)\|\leq C\rme^{-\eta|x-y|}$. Since the rates we used above to construct $\Phi_+^\mathrm{s,u}$ satisfy $0<\kappa^s_+<\kappa^u_+$, the complementary dichotomy $\Phi_+^\mathrm{u}(x,y,\lambda)$ also satisfies $\|\Phi_+^\mathrm{u}(x,y,\lambda)\|\leq C\rme^{-\eta|x-y|}$ for each $0<\eta\leq\kappa^u_+$ as claimed. Note that the individual terms on the right-hand side of the decomposition \eqref{e2} of $\Phi_+^\mathrm{s}(x,y,\lambda)$ are analytic: in particular, the term $\langle W_+^c(y,\lambda),\cdot\rangle_Y$ is analytic in $\lambda$ since $W_+^c(y,\lambda)$ is analytic in $\bar{\lambda}$ and we use the scalar product $\langle w,v \rangle_{\mathbb{C}^{2n}}=\bar{w}^t v$ in $\mathbb{C}^{2n}$ in our definition of $\langle\cdot,\cdot\rangle_Y$.

The construction of analytic exponential dichotomies on $\mathbb{R}_-$ is similar, and it therefore remains to prove the assertions in \eqref{e1}. Since the closed subspace $\Rg(\Phi^\mathrm{s}_+(0,0,0))$ consists of all elements $V_0$ in $Y$ for which there is a solution $V(x)$ of \eqref{eqs-V} at $\lambda=0$ that satisfies $V(0)=V_0$ and $|V(x)|\leq C\rme^{\kappa^s_+ x}$ for $x\geq0$, we see that this subspace is unique and contains $V_j(0)$ (recall that $\kappa^s_+>0$). A similar argument holds for $\Rg(\Phi^\mathrm{u}_-(0,0,0))$. Next, we claim that
\[
\Rg(\Phi^\mathrm{s}_+(0,0,0)) \cap \Rg(\Phi^\mathrm{u}_-(0,0,0)) = E_0^\mathrm{pt}, \quad
\left(\Rg(\Phi^\mathrm{s}_+(0,0,0)) + \Rg(\Phi^\mathrm{u}_-(0,0,0))\right)^\perp = E_0^\mathrm{ad}.
\]
Indeed, the first equation follows from Hypothesis~\ref{h:ss} upon using the exponential weights for the exponential dichotomies on $\mathbb{R}_\pm$. The second assertion is then a consequence of the Fredholm properties proved in \cite[Lemma~4.2]{SS04a} together with the characterization of the adjoint operator shown in \cite[Lemma~6.1]{SS01}. It remains to prove that we can construct the exponential dichotomies so that
\[
\Rg(\Phi^\mathrm{u}_+(0,0,0)) = E_0^\mathrm{u} \oplus E_0^\mathrm{ad}, \quad
\Rg(\Phi^\mathrm{s}_-(0,0,0)) = E_0^\mathrm{s} \oplus E_0^\mathrm{ad},
\]
which is a consequence of \cite[(3.20)]{PSS}. This completes the proof of the lemma.
\end{proof}

\subsection{Extending exponential dichotomies meromorphically to $\mathbb{R}$ near $\lambda=0$}\label{s:ed3}

As mentioned in \S\ref{s:ed1}, our hypotheses together with \cite[Corollary~A.2]{SS04a} imply that \eqref{eqs-V} has an exponential dichotomy on $J=\mathbb{R}$ with $\kappa^s<0<\kappa^u$ for each $\lambda$ with $\Re\lambda\geq0$ except at $\lambda=0$. Furthermore, these dichotomies are analytic in $\lambda$ in the right half-plane. In \S\ref{s:ed2}, we restricted these dichotomies to $\mathbb{R}_+$ and $\mathbb{R}_-$ and extended each restriction to an open neighborhood of $\lambda=0$ so that the resulting exponential dichotomies on $\mathbb{R}_\pm$ with rates $0<\kappa^\mathrm{s}_+<\kappa^\mathrm{u}_+$ on $\mathbb{R}_+$ and $\kappa^\mathrm{s}_-<\kappa^\mathrm{u}_-<0$ on $\mathbb{R}_-$ vary analytically in $\lambda$. At $\lambda=0$, the extended dichotomies satisfy
\[
\Rg(\Phi^\mathrm{s}_+(0,0,0))\cap\Rg(\Phi^\mathrm{u}_-(0,0,0))=E_0^\mathrm{pt},
\]
which shows that we cannot expect an exponential dichotomy to exist at $\lambda=0$.

In this section, we will construct solution operators $\Phi^\mathrm{s}(x,y,\lambda)$, defined for $y\leq x$, and $\Phi^\mathrm{u}(x,y,\lambda)$, defined for $x\leq y$, of \eqref{eqs-V} that are meromorphic in $\lambda$ in a neighborhood of the origin with a simple pole at $\lambda=0$. The solution operators $\Phi^\mathrm{s,u}(x,y,\lambda)$ coincide with the exponential dichotomies when $\Re\lambda>0$ and satisfy
\[
\Rg(\Phi^\mathrm{s}(0,0,\lambda)) = \Rg(\Phi^\mathrm{s}_+(0,0,\lambda)), \quad
\Rg(\Phi^\mathrm{u}(0,0,\lambda)) = \Rg(\Phi^\mathrm{u}_-(0,0,\lambda))
\]
for all $\lambda\neq0$. We use a strategy similar to the one used in \cite{BSZ,Hupkes}. Let
\[
E_+^\mathrm{s,u}(\lambda) := \Rg(\Phi_+^\mathrm{s,u}(0,0,\lambda)), \quad
E_-^\mathrm{s,u}(\lambda) := \Rg(\Phi_-^\mathrm{s,u}(0,0,\lambda)),
\]
then the key to constructing the desired solution operators is to write $E_-^\mathrm{u}(\lambda)$ as the graph of a bounded operator $h_+(\lambda):E_+^\mathrm{u}(\lambda)\to E_+^\mathrm{s}(\lambda)$ that is meromorphic in $\lambda$ and, similarly, represent $E_+^\mathrm{s}(\lambda)$ as the graph of a bounded operator $h_-(\lambda):E_-^\mathrm{s}(\lambda)\to E_-^\mathrm{u}(\lambda)$.

Note that, by construction, $E_+^\mathrm{s}(\lambda)\oplus E_+^\mathrm{u}(\lambda)=Y$ and $E_-^\mathrm{s}(\lambda)\oplus E_-^\mathrm{u}(\lambda)=Y$ for all $\lambda$ near zero. Recall from Lemma~\ref{lem-Phipm} that
\begin{equation}\label{e4}
E^\mathrm{s}_+(0) = E_0^\mathrm{pt}\oplus E_0^\mathrm{s}, \quad
E^\mathrm{u}_+(0) = E_0^\mathrm{u} \oplus E_0^\mathrm{ad}, \quad
E^\mathrm{u}_-(0) = E_0^\mathrm{pt}\oplus E_0^\mathrm{u}, \quad
E^\mathrm{s}_-(0) = E_0^\mathrm{s} \oplus E_0^\mathrm{ad}
\end{equation}
where
\[
E_0^\mathrm{pt}\oplus E_0^\mathrm{s}\oplus E_0^\mathrm{u}\oplus E_0^\mathrm{ad}=Y.
\]
We then have the following result.

\begin{lemma}\label{lem-EEE}
For each $\lambda$ near the origin, there are unique bounded operators
\[
g^\mathrm{s}_+(\lambda): E_0^\mathrm{pt}\oplus E_0^\mathrm{s} \to E_0^\mathrm{u}\oplus E_0^\mathrm{ad}, \quad
g^\mathrm{u}_-(\lambda): E_0^\mathrm{pt}\oplus E_0^\mathrm{u} \to E_0^\mathrm{s}\oplus E_0^\mathrm{ad}, \quad
g^\mathrm{u}_+(\lambda): E_0^\mathrm{u}\oplus E_0^\mathrm{ad} \to E_0^\mathrm{s}\oplus E_0^\mathrm{pt}
\]
such that
\[
E_+^\mathrm{s}(\lambda) = \graph \lambda g^\mathrm{s}_+(\lambda), \quad
E_-^\mathrm{u}(\lambda) = \graph \lambda g^\mathrm{u}_-(\lambda), \quad
E_+^\mathrm{u}(\lambda) = \graph \lambda g^\mathrm{u}_+(\lambda).
\]
Furthermore, these operators are analytic in $\lambda$ for $\lambda$ near the origin.
\end{lemma}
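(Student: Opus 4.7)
The plan is a standard graph-representation argument based on perturbing the subspace decomposition~\eqref{e4}, applied in parallel to each of $E_+^\mathrm{s}(\lambda)$, $E_-^\mathrm{u}(\lambda)$, and $E_+^\mathrm{u}(\lambda)$. All three subspaces depend analytically on $\lambda$ near the origin, since they are the ranges of the analytic projections $\Phi_\pm^\mathrm{s,u}(0,0,\lambda)$ supplied by Lemma~\ref{lem-Phipm}, and at $\lambda=0$ each coincides with the sum of two of the four natural summands $E_0^\mathrm{pt},E_0^\mathrm{s},E_0^\mathrm{u},E_0^\mathrm{ad}$ of $Y$ recorded in \eqref{e4}. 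I will therefore construct the three graph operators by the same recipe, read off $G_\bullet(0)=0$ from this identification at $\lambda=0$, and then factor out the zero to get $g_\bullet(\lambda):=\lambda^{-1}G_\bullet(\lambda)$.

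Consider for concreteness $E_+^\mathrm{s}(\lambda)$, set $F_1:=E_0^\mathrm{pt}\oplus E_0^\mathrm{s}$ and $F_2:=E_0^\mathrm{u}\oplus E_0^\mathrm{ad}$, and let $\Pi_1,\Pi_2$ denote the constant projections of $Y=F_1\oplus F_2$ onto $F_1$ and $F_2$. First I would verify that the splitting $Y=E_+^\mathrm{s}(\lambda)\oplus F_2$ persists for $\lambda$ near zero: the projection $P_+^\mathrm{s}(0,\lambda):=\Phi_+^\mathrm{s}(0,0,\lambda)$ onto $E_+^\mathrm{s}(\lambda)$ along $E_+^\mathrm{u}(\lambda)$ is analytic in $\lambda$ by Lemma~\ref{lem-Phipm} and thus norm-close to $P_+^\mathrm{s}(0,0)$, whose kernel is $E_+^\mathrm{u}(0)=F_2$. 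Equivalently, the bounded operator $A(\lambda):=\Pi_1 P_+^\mathrm{s}(0,\lambda)|_{F_1}$ on $F_1$ satisfies $A(0)=I_{F_1}$, and is therefore analytically invertible for $\lambda$ near $0$ by a Neumann series.

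Next, I would define $G_+^\mathrm{s}(\lambda):=\Pi_2 P_+^\mathrm{s}(0,\lambda) A(\lambda)^{-1}$, an operator-analytic family mapping $F_1$ into $F_2$. For $v\in F_1$, the vector $x(\lambda):=P_+^\mathrm{s}(0,\lambda)A(\lambda)^{-1}v$ lies in $E_+^\mathrm{s}(\lambda)$ by construction and satisfies $\Pi_1 x(\lambda)=v$ together with $\Pi_2 x(\lambda)=G_+^\mathrm{s}(\lambda)v$, so $x(\lambda)=v+G_+^\mathrm{s}(\lambda)v$; this yields $\graph(G_+^\mathrm{s}(\lambda))\subseteq E_+^\mathrm{s}(\lambda)$, with equality by the splitting $Y=E_+^\mathrm{s}(\lambda)\oplus F_2$. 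At $\lambda=0$, one has $P_+^\mathrm{s}(0,0)|_{F_1}=I_{F_1}$ and thus $\Pi_2 P_+^\mathrm{s}(0,0)v=0$ for $v\in F_1$, giving $G_+^\mathrm{s}(0)=0$, so $g_+^\mathrm{s}(\lambda):=\lambda^{-1}G_+^\mathrm{s}(\lambda)$ extends to an operator-analytic function near the origin by term-by-term division of the Taylor series of $G_+^\mathrm{s}$. Uniqueness of $G_+^\mathrm{s}(\lambda)$, and hence of $g_+^\mathrm{s}(\lambda)$, is immediate from uniqueness of graph representations relative to the fixed splitting $Y=F_1\oplus F_2$.

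The same recipe applies verbatim to $E_-^\mathrm{u}(\lambda)$ using $Y=(E_0^\mathrm{pt}\oplus E_0^\mathrm{u})\oplus(E_0^\mathrm{s}\oplus E_0^\mathrm{ad})$ together with the projection $\Phi_-^\mathrm{u}(0,0,\lambda)$, and to $E_+^\mathrm{u}(\lambda)$ using $Y=(E_0^\mathrm{u}\oplus E_0^\mathrm{ad})\oplus(E_0^\mathrm{s}\oplus E_0^\mathrm{pt})$ together with $\Phi_+^\mathrm{u}(0,0,\lambda)$, producing $g_-^\mathrm{u}$ and $g_+^\mathrm{u}$ respectively. There is no serious obstacle: the only technical step is the analytic inversion of $A(\lambda)$, which is a routine Neumann-series argument of the kind already carried out in \cite[Lemma~4]{BSZ} and \cite{Hupkes}, and the main bookkeeping I anticipate is simply to check that the pairings among $E_0^\mathrm{pt},E_0^\mathrm{s},E_0^\mathrm{u},E_0^\mathrm{ad}$ in \eqref{e4} align correctly with the codomains of $g_+^\mathrm{s},g_-^\mathrm{u},g_+^\mathrm{u}$ stated in the lemma.
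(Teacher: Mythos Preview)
Your proposal is correct and follows exactly the approach the paper has in mind: the paper's own proof is the single sentence ``The subspaces $E_\pm^\mathrm{s,u}(\lambda)$ depend analytically on $\lambda$, and the result therefore follows from \eqref{e4},'' and what you have written is precisely the standard graph-representation argument that unpacks this sentence. There is no difference in strategy, only in level of detail.
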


\begin{proof}
The subspaces $E_\pm^\mathrm{s,u}(\lambda)$ depend analytically on $\lambda$, and the result therefore follows from \eqref{e4}.
\end{proof}

Let $P_0^\mathrm{ad}$ be the projection onto $E_0^\mathrm{ad}$ with null space $E_0^\mathrm{pt}\oplus E_0^\mathrm{s}\oplus E_0^\mathrm{u}$. We have the following result for the difference $P_0^\mathrm{ad}(g^\mathrm{u}_-(0)-g^\mathrm{s}_+(0))$ restricted to $E_0^\mathrm{pt}$.

\begin{lemma}\label{lem-invertible}
The operator
\[
P_0^\mathrm{ad} \left(g^\mathrm{u}_-(0) - g^\mathrm{s}_+(0)\right)\Big|_{E_0^\mathrm{pt}}:\quad E_0^\mathrm{pt} \longrightarrow E_0^\mathrm{ad}
\]
is invertible and given by the matrix $M$ from \eqref{eadj3} with respect to the bases $\{V_1(0),V_2(0)\}$ of $E_0^\mathrm{pt}$ and $\{W_1(0),W_2(0)\}$ of $E_0^\mathrm{ad}$.
\end{lemma}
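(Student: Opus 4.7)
The plan is a Melnikov-style calculation that realizes $g_-^{\mathrm{u}}(0)-g_+^{\mathrm{s}}(0)$ as a $\lambda$-derivative of two analytic solution families of \eqref{eqs-V} and pairs the result against the adjoint eigenfunctions $W_i$. Starting from $V_j(0)\in E_0^{\mathrm{pt}}$, the graph representation of Lemma~\ref{lem-EEE} yields the analytic curve $V_j(0)+\lambda g_+^{\mathrm{s}}(\lambda)V_j(0)\in E_+^{\mathrm{s}}(\lambda)$; propagating by the analytic dichotomy $\Phi_+^{\mathrm{s}}(x,0,\lambda)$ from Lemma~\ref{lem-Phipm} produces an $\mathbb{R}_+$-bounded analytic family $V_j^{\mathrm{s}}(x,\lambda)$ of solutions to \eqref{eqs-V} with $V_j^{\mathrm{s}}(x,0)=V_j(x)$, and an analogous $\mathbb{R}_-$-bounded family $V_j^{\mathrm{u}}(x,\lambda)$ is built from $\Phi_-^{\mathrm{u}}$ and $g_-^{\mathrm{u}}$. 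Differentiating $\partial_x V_j^{\mathrm{s,u}}=A(x,\lambda)V_j^{\mathrm{s,u}}$ in $\lambda$ at $\lambda=0$, the functions $\Psi_j^{\mathrm{s,u}}(x):=\partial_\lambda V_j^{\mathrm{s,u}}(x,0)$ are $Y$-valued, bounded on $\mathbb{R}_\pm$ respectively, start at $g_+^{\mathrm{s}}(0)V_j(0)$ and $g_-^{\mathrm{u}}(0)V_j(0)$, and both satisfy
$$\partial_x\Psi=A(x,0)\Psi+BV_j(x),\qquad B:=\partial_\lambda A=\begin{pmatrix}0&0\\ D^{-1}&0\end{pmatrix}.$$

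Since $W_i$ solves the adjoint equation \eqref{eqs-Vadj} at $\lambda=0$, one has $\frac{d}{dx}\langle W_i,\Psi_j^{\mathrm{s,u}}\rangle_Y=\langle W_i,BV_j\rangle_Y$. Integrating on $[0,R]$ and on $[-R,0]$ and letting $R\to\infty$, the boundary contributions at $\pm\infty$ vanish by the exponential decay \eqref{eW12ed} of $W_i$ against uniform boundedness of $\Psi_j^{\mathrm{s,u}}$, yielding
$$\langle W_i(0),(g_-^{\mathrm{u}}(0)-g_+^{\mathrm{s}}(0))V_j(0)\rangle_Y=\int_{-\infty}^{\infty}\langle W_i(x),BV_j(x)\rangle_Y\,dx.$$
Unwinding the $Y$-inner product \eqref{norm-Y} on the right-hand side, only the second components contribute; with $W_{i,2}=\mathcal{J}^{-3/2}D\psi_i$ and $(BV_j)_2=D^{-1}v_{j,1}$ where $v_{j,1}\in\{\bar u_x,\bar u_t\}$, self-adjointness of $\mathcal{J}^{3/4}$ on $L^2(\mathbb{T})$ combines the two weights into $\mathcal{J}^{3/2}$, which cancels the $\mathcal{J}^{-3/2}$ from $W_{i,2}$, while $D$ cancels against $D^{-1}$, leaving $\langle W_i(x),BV_j(x)\rangle_Y=\int_0^{2\pi}\langle\psi_i(x,t),v_{j,1}(x,t)\rangle_{\mathbb{C}^n}\,dt$. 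Integrating in $x$ then returns precisely the entry $M_{ij}$ of \eqref{eadj3}.

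To convert this bilinear identity into invertibility of the operator, I would use that $E_0^{\mathrm{ad}}=(\Rg\Phi_+^{\mathrm{s}}(0,0,0)+\Rg\Phi_-^{\mathrm{u}}(0,0,0))^\perp=(E_0^{\mathrm{pt}}\oplus E_0^{\mathrm{s}}\oplus E_0^{\mathrm{u}})^\perp$ (as established inside the proof of Lemma~\ref{lem-Phipm}), so that $\langle W_i(0),P_0^{\mathrm{ad}}w\rangle_Y=\langle W_i(0),w\rangle_Y$ for all $w\in Y$. Linear independence of $\{W_1(0),W_2(0)\}$ makes the Gram-type map $\Phi:E_0^{\mathrm{ad}}\to\mathbb{C}^2$, $w\mapsto(\langle W_i(0),w\rangle_Y)_{i=1,2}$, an isomorphism; the composition $\Phi\circ P_0^{\mathrm{ad}}(g_-^{\mathrm{u}}(0)-g_+^{\mathrm{s}}(0))|_{E_0^{\mathrm{pt}}}$ is represented by the matrix $M=\mathrm{Id}$ in the bases $\{V_1(0),V_2(0)\}$ and the standard basis of $\mathbb{C}^2$, which forces the operator itself to be invertible with the desired bilinear matrix representation.

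The main obstacle I anticipate is the careful justification of Step~1: one must verify that the analytic family $V_j^{\mathrm{s}}(x,\lambda)$ is bounded in $Y$ uniformly in $(x,\lambda)$ for $x\in\mathbb{R}_+$ and $\lambda$ near zero, so that its $\lambda$-derivative yields a $Y$-valued, $\mathbb{R}_+$-bounded solution of the inhomogeneous equation with bounded forcing $BV_j$; this follows from the uniform analytic dichotomy bounds $\|\Phi_+^{\mathrm{s}}(x,0,\lambda)\|_{L(Y)}\lesssim 1$ built into Lemma~\ref{lem-Phipm}. A secondary bookkeeping issue is the fractional-power cancellation in the inner product computation, which relies on self-adjointness of $\mathcal{J}^s$ on $L^2(\mathbb{T})$ to reduce the weighted pairing in $Y$ to the plain $L^2(\mathbb{T},\mathbb{C}^n)$ pairing appearing in \eqref{eadj3}.
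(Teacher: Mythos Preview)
Your approach is essentially the same Melnikov computation as the paper's: both realize the matrix entries as $\int_{\mathbb{R}}\langle W_i(x),BV_j(x)\rangle_Y\,dx$ and then unwind the $Y$-inner product to recover $M$. The organization differs only in that the paper invokes the variation-of-constants identity
\[
\partial_\lambda\Phi^{\mathrm s}_+(0,0,0)V_j(0)=\int_\infty^0 \Phi_+^{\mathrm u}(0,x,0)\,B\,\Phi_+^{\mathrm s}(x,0,0)V_j(0)\,dx
\]
from the dichotomy construction and then moves $\Phi_+^{\mathrm u}(0,x,0)^*$ onto $W_i(0)$, whereas you differentiate the solution family and integrate $\frac{d}{dx}\langle W_i,\Psi^{\mathrm s}_j\rangle_Y$.

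There is, however, a genuine gap in your justification of the boundary terms. You assert that $V_j^{\mathrm s}(x,\lambda)$ is uniformly bounded for $(x,\lambda)\in\mathbb{R}_+\times U_\delta(0)$ and hence that $\Psi_j^{\mathrm s}=\partial_\lambda V_j^{\mathrm s}(\cdot,0)$ is bounded on $\mathbb{R}_+$. This is false: the dichotomy rates in Lemma~\ref{lem-Phipm} satisfy $0<\kappa_+^{\mathrm s}<\kappa_+^{\mathrm u}$, so the stable evolution is only controlled by $\|\Phi_+^{\mathrm s}(x,0,\lambda)\|\leq K\rme^{\kappa_+^{\mathrm s}x}$, not by a uniform constant. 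Concretely, the center component contributes the factor $\rme^{\nu_+^c(\lambda)x}$, and differentiating in $\lambda$ at $\lambda=0$ produces the term $x\,\nu_+^{c\prime}(0)\,V_+^c(x,0)=-\tfrac{x}{c_+}V_+^c(x,0)$, so $\Psi_j^{\mathrm s}(x)$ grows linearly in $x$. The fix is easy: since $|W_i(x)|_Y\le C\rme^{-\eta|x|}$ by \eqref{eW12ed}, linear (indeed any sub-exponential) growth of $\Psi_j^{\mathrm s}$ still gives $\langle W_i(R),\Psi_j^{\mathrm s}(R)\rangle_Y\to 0$, and your integration argument goes through unchanged. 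The paper's route via the projection formula at $x=0$ sidesteps this issue entirely, because the integrand $\Phi_+^{\mathrm u}(0,x,0)BV_j(x)$ is exponentially decaying from the outset.
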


\begin{proof} 
Recall that the graph of $\lambda g^\mathrm{s}_+(\lambda)$ is the space $\Rg(\Phi^\mathrm{s}_+(0,0,\lambda))$ and, similarly, the graph of $\lambda g^\mathrm{u}_-(\lambda)$ is the space $\Rg(\Phi^\mathrm{u}_-(0,0,\lambda))$. Hence, $P_0^\mathrm{ad}(g^\mathrm{u}_-(0)-g^\mathrm{s}_+(0))$ restricted to $E_0^\mathrm{pt}$ is represented by the matrix with entries
\[
\langle W_i(0),\partial_\lambda(\Phi^\mathrm{u}_-(0,0,\lambda)-\Phi^\mathrm{s}_+(0,0,\lambda))|_{\lambda=0} V_j(0) \rangle_Y.
\]
It follows from the construction of exponential dichotomies in \cite{PSS,SS01,SS04a} that
\[
\Phi^\mathrm{s}_+(0,0,\lambda) = \Phi^\mathrm{s}_+(0,0,0)
+ \lambda \int_\infty^0 \Phi^\mathrm{u}_+(0,x,0) B \Phi^\mathrm{s}_+(x,0,\lambda) \,\rmd x, \qquad
B = \begin{pmatrix} 0 & 0 \\ D^{-1} & 0 \end{pmatrix}.
\]
In particular, we have
\begin{eqnarray*}
\lefteqn{\langle W_i(0),\partial_\lambda\Phi^\mathrm{s}_+(0,0,\lambda)|_{\lambda=0} V_j(0)\rangle_Y} \\ & = &
\int_\infty^0 \langle W_i(0),\Phi^\mathrm{u}_+(0,x,0) B \Phi^\mathrm{s}_+(x,0,0) V_j(0) \rangle_Y \,\rmd x = 
\int_\infty^0 \langle W_i(x),B V_j(x) \rangle_Y \,\rmd x,
\end{eqnarray*}
where we used that $V_j(x)=\Phi^\mathrm{s}_+(x,0,0)V_j(0)$, $W_j(x)=\Phi^\mathrm{u}_+(0,x,0)^*W_i(0)$, and \eqref{def-W12}. Proceeding in the same way for $\lambda g^\mathrm{u}_-(\lambda)$ and using the definitions of $V_j(x)$, $W_i(x)$, and $\langle\cdot,\cdot,\rangle_Y$ from \S\ref{s:ed0}, we see that $P_0^\mathrm{ad}(g^\mathrm{u}_-(0)-g^\mathrm{s}_+(0))$ restricted to $E_0^\mathrm{pt}$ has the matrix representation
\begin{eqnarray}\label{e6}
\lefteqn{\int_\mathbb{R} \begin{pmatrix}
\langle W_1(x),BV_1(x)\rangle_Y & \langle W_1(x),BV_2(x)\rangle_Y \\
\langle W_2(x),BV_1(x)\rangle_Y & \langle W_2(x),BV_2(x)\rangle_Y
\end{pmatrix}\, \rmd x}
\\ \nonumber & = &
\int_\mathbb{R} \begin{pmatrix}
\langle \mathcal{J}^{\frac34} \mathcal{J}^{-\frac32}D\psi_1(x,\cdot),\mathcal{J}^{\frac34}D^{-1}\bar{u}_x(x,\cdot) \rangle_{L^2} &
\langle \mathcal{J}^{\frac34} \mathcal{J}^{-\frac32}D\psi_1(x,\cdot),\mathcal{J}^{\frac34}D^{-1}\bar{u}_t(x,\cdot) \rangle_{L^2} \\
\langle \mathcal{J}^{\frac34} \mathcal{J}^{-\frac32}D\psi_2(x,\cdot),\mathcal{J}^{\frac34}D^{-1}\bar{u}_x(x,\cdot) \rangle_{L^2} &
\langle \mathcal{J}^{\frac34} \mathcal{J}^{-\frac32}D\psi_2(x,\cdot),\mathcal{J}^{\frac34}D^{-1}\bar{u}_t(x,\cdot) \rangle_{L^2}
\end{pmatrix}\, \rmd x
\\ \nonumber & = &
\int_\mathbb{R} \int_0^{2\pi} \begin{pmatrix}
\langle \psi_1(x,t),\bar{u}_x(x,t)\rangle_{\mathbb{R}^n} & \langle \psi_1(x,t),\bar{u}_t(x,t)\rangle_{\mathbb{R}^n} \\
\langle \psi_2(x,t),\bar{u}_x(x,t)\rangle_{\mathbb{R}^n} & \langle \psi_2(x,t),\bar{u}_t(x,t)\rangle_{\mathbb{R}^n}
\end{pmatrix}\,\rmd t\, \rmd x
\end{eqnarray}
with respect to the bases $\{V_1(0),V_2(0)\}$ of $E_0^\mathrm{pt}$ and $\{W_1(0),W_2(0)\}$ of $E_0^\mathrm{ad}$. We showed at the end of~\S\ref{s:prelim} that the matrix $M$ on the right-hand side of \eqref{e6} is indeed invertible as claimed.
\end{proof}

Let $P_0^\mathrm{u,ad}$ be the projection onto $E_0^\mathrm{u}\oplus E_0^\mathrm{ad}$ with null space $E_0^\mathrm{pt}\oplus E_0^\mathrm{s}$. We can then state our key result.

\begin{lemma}\label{lem:mer-ext}
There is a $\delta>0$ such that, for each $\lambda\in U_\delta(0)\setminus\{0\}$, there is a unique linear bounded operator $h^+(\lambda): E_+^\mathrm{u}(\lambda)\to E_+^\mathrm{s}(\lambda)$ so that $E_-^\mathrm{u}(\lambda)=\graph h^+(\lambda)$, and we have $h^+(\lambda)=\tilde{h}^+(\lambda)P_0^\mathrm{u,ad}$ with
\[
\tilde{h}^+(\lambda) = \frac{1}{\lambda} M^{-1} P_0^\mathrm{ad} + h^+_a(\lambda) : E_0^\mathrm{u}\oplus E_0^\mathrm{ad}\longrightarrow Y, \quad
\Rg(\tilde{h}_+(\lambda)) \subset E^\mathrm{s}_+(\lambda)
\]
where $h^+_a(\lambda)$ is analytic for $\lambda\in U_\delta(0)$. Similarly, for each $\lambda\in U_\delta(0)\setminus\{0\}$, there is a unique linear bounded operator $h^-(\lambda): E_-^\mathrm{s}(\lambda)\to E_-^\mathrm{u}(\lambda)$ so that $E_+^\mathrm{s}(\lambda)=\graph h^-(\lambda)$ and we have $h^-(\lambda)=\tilde{h}^-(\lambda)P_0^\mathrm{s,ad}$ with
\[
\tilde{h}^-(\lambda) = -\frac{1}{\lambda} M^{-1} P_0^\mathrm{ad} + h^-_a(\lambda): E_0^\mathrm{s}\oplus E_0^\mathrm{ad}\longrightarrow Y, \quad
\Rg(\tilde{h}_+(\lambda)) \subset E^\mathrm{u}_-(\lambda)
\]
where $h^-_a(\lambda)$ is analytic for $\lambda\in U_\delta(0)$. 
\end{lemma}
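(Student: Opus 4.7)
The plan is to solve, for every $\lambda$ near zero, the splitting problem: given $V^u\in E_+^\mathrm{u}(\lambda)$, find $V^s\in E_+^\mathrm{s}(\lambda)$ so that $V^u+V^s\in E_-^\mathrm{u}(\lambda)$. Parametrize the three relevant subspaces by the graph representations from Lemma~\ref{lem-EEE}: set $V^u = u + \lambda g^\mathrm{u}_+(\lambda) u$ with $u\in E_0^\mathrm{u}\oplus E_0^\mathrm{ad}$, $V^s = w + \lambda g^\mathrm{s}_+(\lambda) w$ with $w\in E_0^\mathrm{pt}\oplus E_0^\mathrm{s}$, and require $V^u + V^s = v + \lambda g^\mathrm{u}_-(\lambda) v$ for some $v\in E_0^\mathrm{pt}\oplus E_0^\mathrm{u}$. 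Decomposing both sides according to $Y = E_0^\mathrm{pt}\oplus E_0^\mathrm{s}\oplus E_0^\mathrm{u}\oplus E_0^\mathrm{ad}$ from Lemma~\ref{lem-Phipm} and using the range constraints on each $g^\mathrm{s,u}_\pm$, I would obtain a $4\times 4$ block linear system for the unknowns $(v_\mathrm{pt}, v_\mathrm{u}, w_\mathrm{pt}, w_\mathrm{s})$ with data $(u_\mathrm{u}, u_\mathrm{ad})$, depending analytically on $\lambda$.

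Three of the four projections are nearly trivial at $\lambda = 0$: the $E_0^\mathrm{pt}$, $E_0^\mathrm{u}$, and $E_0^\mathrm{s}$ components give $v_\mathrm{pt} - w_\mathrm{pt} = \mathrm{O}(\lambda)$, $v_\mathrm{u} - u_\mathrm{u} = \mathrm{O}(\lambda)$, and $w_\mathrm{s} = \mathrm{O}(\lambda)$, respectively. The $E_0^\mathrm{ad}$ projection is where the singularity lives: it reads
\[
\lambda\bigl[P_0^\mathrm{ad} g^\mathrm{u}_-(\lambda) v - P_0^\mathrm{ad} g^\mathrm{s}_+(\lambda) w\bigr] = u_\mathrm{ad}.
\]
Substituting the three near-trivial relations and using Lemma~\ref{lem-invertible} to identify $P_0^\mathrm{ad}(g^\mathrm{u}_-(0) - g^\mathrm{s}_+(0))|_{E_0^\mathrm{pt}} = M$, this collapses at leading order to $\lambda M w_\mathrm{pt} = u_\mathrm{ad} + \lambda\,R(\lambda)(u,w_\mathrm{pt})$ with $R$ analytic. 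Because $M$ is invertible, a Schur-complement/Neumann-series inversion of the full block system then yields
\[
w_\mathrm{pt} = \frac{1}{\lambda} M^{-1} u_\mathrm{ad} + R_\mathrm{pt}(\lambda) u, \qquad v_\mathrm{pt},\; v_\mathrm{u},\; w_\mathrm{s} = R_\bullet(\lambda) u,
\]
with all remainder operators $R_\bullet(\lambda)$ analytic in $\lambda\in U_\delta(0)$ and linear in $u$.

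Assembling $V^s = w + \lambda g^\mathrm{s}_+(\lambda) w$ and noting that the $E_0^\mathrm{pt}$- and $E_0^\mathrm{s}$-components of $\lambda g^\mathrm{s}_+(\lambda) w$ vanish, the only $\lambda^{-1}$ contribution to $V^s$ comes from $w_\mathrm{pt}$ itself, so $V^s = \lambda^{-1} M^{-1} u_\mathrm{ad} + (\text{analytic in }\lambda)$. Since $u = P_0^\mathrm{u,ad} V^u$ and $u_\mathrm{ad} = P_0^\mathrm{ad} V^u$, this is precisely $h^+(\lambda) = \tilde h^+(\lambda) P_0^\mathrm{u,ad}$ with $\tilde h^+(\lambda) = \lambda^{-1} M^{-1} P_0^\mathrm{ad} + h^+_a(\lambda)$ and $h^+_a$ analytic, as claimed. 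The range containment $\Rg(\tilde h^+(\lambda))\subset E_+^\mathrm{s}(\lambda)$ is automatic since $V^s\in E_+^\mathrm{s}(\lambda)$ by construction, and uniqueness of $h^+$ follows from the direct sum $E_+^\mathrm{u}(\lambda)\oplus E_+^\mathrm{s}(\lambda) = Y$. The construction of $h^-(\lambda)$ is entirely parallel: swapping the roles $+\leftrightarrow -$ and $\mathrm{u}\leftrightarrow \mathrm{s}$, the analogous Schur complement evaluates to $-M$ by an identical variation-of-constants computation, which produces the opposite sign in the leading pole. The main technical obstacle is the precise identification of the pole residue as $M^{-1}P_0^\mathrm{ad}$ rather than merely showing that some simple pole exists: this requires verifying that, after substituting the three mild block equations into the $E_0^\mathrm{ad}$ equation, the higher-order corrections genuinely constitute an analytic perturbation of $\lambda M$ rather than altering its leading coefficient, which ultimately reduces to the analyticity of $g^\mathrm{s,u}_\pm(\lambda)$ afforded by Lemma~\ref{lem-EEE}.
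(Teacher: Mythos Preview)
Your proposal is correct and follows essentially the same approach as the paper's proof: both parametrize the three subspaces via the graph maps of Lemma~\ref{lem-EEE}, decompose the resulting matching equation along the four-block splitting $Y = E_0^\mathrm{pt}\oplus E_0^\mathrm{s}\oplus E_0^\mathrm{u}\oplus E_0^\mathrm{ad}$, observe that three of the four block equations are $\mathrm{O}(\lambda)$-perturbations of the identity while the $E_0^\mathrm{ad}$-equation carries the singularity, and then invert via Lemma~\ref{lem-invertible} to extract the $\lambda^{-1}M^{-1}P_0^\mathrm{ad}$ residue. The paper organizes the bookkeeping slightly differently (solving first for the auxiliary $\widetilde V^\mathrm{u},\widetilde V^\mathrm{pt}$ before back-substituting into the $E_0^\mathrm{ad}$-equation), but the substance and the key inputs are identical.
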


\begin{proof}
For each $V\in Y$, we write
\begin{equation}\label{e10}
V = (V^\mathrm{pt},V^\mathrm{s},V^\mathrm{u},V^\mathrm{ad}) \in E_0^\mathrm{pt} \oplus E_0^\mathrm{s} \oplus E_0^\mathrm{u} \oplus E_0^\mathrm{ad}.
\end{equation}
We focus on $h^+(\lambda)$ as the proof for $h^-(\lambda)$ is analogous. First, we note that $E^\mathrm{u}_+(\lambda)\oplus E^\mathrm{s}_+(\lambda)=Y$, and we can therefore write any element $\widetilde{V}$ of $E^\mathrm{u}_-(\lambda)$ uniquely as the sum of elements in $E^\mathrm{u}_+(\lambda)$ and $E^\mathrm{s}_+(\lambda)$, Using Lemma~\ref{lem-EEE} then results in the system
\begin{equation}\label{e11}
\underbrace{\widetilde V^\mathrm{u} + \widetilde V^\mathrm{pt} + \lambda g^\mathrm{u}_-(\lambda) (\widetilde V^\mathrm{u} + \widetilde V^\mathrm{pt})}_{\in E_-^\mathrm{u}(\lambda)} =
\underbrace{V^\mathrm{u} + V^\mathrm{ad} + \lambda g^\mathrm{u}_+(\lambda) (V^\mathrm{u} + V^\mathrm{ad})}_{\in E_+^\mathrm{u}(\lambda)} + \underbrace{ V^\mathrm{s} + V^\mathrm{pt} + \lambda g^\mathrm{s}_+(\lambda) (V^\mathrm{s} + V^\mathrm{pt})}_{\in E_+^\mathrm{s}(\lambda)},
\end{equation}
where the operators on the right-hand side are analytic in $\lambda$ for $\lambda\in U_\delta(0)$. For each $\widetilde V\in E^\mathrm{u}_-(\lambda)$, we need to express $(V^\mathrm{s},V^\mathrm{pt})$ in terms of $(V^\mathrm{u},V^\mathrm{ad})$, which will then determine the desired operator $h^+(\lambda)$. We decompose \eqref{e11} into the components given in \eqref{e10} and arrive at the system
\begin{eqnarray*}
\widetilde V^\mathrm{u} & = & V^\mathrm{u} + \lambda P^\mathrm{u}_0 g^\mathrm{s}_+(\lambda)(V^\mathrm{s}+V^\mathrm{pt}) \\
\widetilde V^\mathrm{pt} & = & V^\mathrm{pt} + \lambda P^\mathrm{pt}_0 g^\mathrm{u}_+(\lambda)(V^\mathrm{u}+V^\mathrm{ad}) \\
V^\mathrm{ad} & = & \lambda P^\mathrm{ad}_0\left( g^\mathrm{u}_-(\lambda) (\widetilde V^\mathrm{u} + \widetilde V^\mathrm{pt}) - g^\mathrm{s}_+(\lambda) (V^\mathrm{s} + V^\mathrm{pt}) \right) \\
V^\mathrm{s} & = & \lambda P^\mathrm{s}_0 \left( g^\mathrm{u}_-(\lambda) (\widetilde V^\mathrm{u} + \widetilde V^\mathrm{pt}) -g^\mathrm{u}_+(\lambda) (V^\mathrm{u}+V^\mathrm{ad}) \right).
\end{eqnarray*}
The first two equations uniquely determine $(\widetilde V^\mathrm{u},\widetilde V^\mathrm{pt})$ in terms of $V$ and substituting these expressions into the remaining two equations gives
\begin{eqnarray}
V^\mathrm{ad} & = & \lambda P^\mathrm{ad}_0 \left(g^\mathrm{u}_-(\lambda)(V^\mathrm{u}+V^\mathrm{pt}) - g^\mathrm{s}_+(\lambda) (V^\mathrm{s} + V^\mathrm{pt}) + \lambda h_1(\lambda) V\right) \label{e12} \\ \nonumber
V^\mathrm{s} & = & \lambda h_2(\lambda) V
\end{eqnarray}
for certain bounded operators $h_{1,2}(\lambda)$ that are analytic in $\lambda\in U_\delta(0)$. In particular, there is a bounded operator $h_3(\lambda)$ that depends analytically on $\lambda\in U_\delta(0)$ so that
\begin{equation}\label{e13}
V^\mathrm{ad} + V^\mathrm{s} = \lambda h_3(\lambda) (V^\mathrm{u}+V^\mathrm{pt}).
\end{equation}
It remains to express $V^\mathrm{pt}$ in terms of $V^\mathrm{u}+V^\mathrm{ad}$. To accomplish this, we substitute \eqref{e13} into the equation for $V^\mathrm{ad}$ in \eqref{e12} and obtain
\begin{equation}\label{e14}
V^\mathrm{ad} = \lambda \left( P^\mathrm{ad}_0 (g^\mathrm{u}_-(0)-g^\mathrm{s}_+(0)) + \lambda h_5(\lambda) \right) V^\mathrm{pt} + \lambda h_4(\lambda) V^\mathrm{u}
\end{equation}
for certain bounded operators $h_{4,5}(\lambda)$ that are analytic in $\lambda\in U_\delta(0)$. Since $M=P^\mathrm{ad}_0(g^\mathrm{u}_-(0)-g^\mathrm{s}_+(0))|_{E_0^\mathrm{pt}}$ is invertible by Lemma~\ref{lem-invertible}, we have
\[
\left(M + \lambda h_5(\lambda)\right)^{-1} = M^{-1} + \lambda h_6(\lambda)
\]
for a certain bounded operator $h_6(\lambda)$ that is analytic in $\lambda\in U_\delta(0)$. For each $\lambda\in U_\delta(0)\setminus\{0\}$, we can therefore write \eqref{e14} as
\[
\frac{1}{\lambda} (M^{-1}+\lambda h_6(\lambda)) V^\mathrm{ad} = V^\mathrm{pt} + (M^{-1}+\lambda h_6(\lambda)) h_4(\lambda) V^\mathrm{u}
\]
and hence obtain that
\[
V^\mathrm{pt} = \frac{1}{\lambda} M^{-1} V^\mathrm{ad} + h_7(\lambda) (V^\mathrm{ad}+V^\mathrm{u}),
\]
where the bounded operator $h_7(\lambda)$ is analytic in $\lambda\in U_\delta(0)$. Recalling \eqref{e13}, we finally arrive at the representation
\[
V^\mathrm{pt} + V^\mathrm{s} = \frac{1}{\lambda} M^{-1} V^\mathrm{ad} + \tilde{h}^+_\mathrm{a}(\lambda) (V^\mathrm{ad}+V^\mathrm{u})
\]
that is valid for all $\lambda\in U_\delta(0)\setminus\{0\}$, where $\tilde{h}^+_\mathrm{a}(\lambda)$ is bounded and analytic in $\lambda\in U_\delta(0)$.

In summary, for each $\lambda\in U_\delta(0)\setminus\{0\}$, every $\widetilde V\in E_-^\mathrm{u}(\lambda)$ can be written uniquely as
\begin{eqnarray*}
\widetilde V & = &
\underbrace{V^\mathrm{u}+V^\mathrm{ad}+\lambda g^\mathrm{u}_+(\lambda)(V^\mathrm{u}+V^\mathrm{ad})}_{\in E_+^\mathrm{u}(\lambda)} \\ &&
+ \underbrace{ \frac{1}{\lambda} M^{-1} V^\mathrm{ad} + \tilde{h}^+_\mathrm{a}(\lambda) (V^\mathrm{ad}+V^\mathrm{u}) + \lambda g^\mathrm{s}_+(\lambda)\left(\frac{1}{\lambda} M^{-1} V^\mathrm{ad} + \tilde{h}^+_\mathrm{a}(\lambda) (V^\mathrm{ad}+V^\mathrm{u})\right)}_{\in E_+^\mathrm{s}(\lambda)} \\ & =: &
\underbrace{V^\mathrm{u}+V^\mathrm{ad}+\lambda g^\mathrm{u}_+(\lambda)(V^\mathrm{u}+V^\mathrm{ad})}_{\in E_+^\mathrm{u}(\lambda)}
+ \underbrace{\frac{1}{\lambda} M^{-1} V^\mathrm{ad} + h^+_\mathrm{a}(\lambda) (V^\mathrm{ad}+V^\mathrm{u})}_{\in E_+^\mathrm{s}(\lambda)} \\ & = &
\underbrace{V^\mathrm{u}+V^\mathrm{ad}+\lambda g^\mathrm{u}_+(\lambda)(V^\mathrm{u}+V^\mathrm{ad})}_{\in E_+^\mathrm{u}(\lambda)}
+ \underbrace{\left(\frac{1}{\lambda} M^{-1} P_0^\mathrm{ad} + h^+_\mathrm{a}(\lambda)\right) (V^\mathrm{ad}+V^\mathrm{u})}_{\in E_+^\mathrm{s}(\lambda)} \\ & = &
\underbrace{V^\mathrm{u}+V^\mathrm{ad}+\lambda g^\mathrm{u}_+(\lambda)(V^\mathrm{u}+V^\mathrm{ad})}_{=:V\in E_+^\mathrm{u}(\lambda)}
+ \underbrace{\left(\frac{1}{\lambda} M^{-1} P_0^\mathrm{ad} + h^+_\mathrm{a}(\lambda)\right) P_0^\mathrm{u,ad} \left(V^\mathrm{u}+V^\mathrm{ad}+\lambda g^\mathrm{u}_+(\lambda)(V^\mathrm{u}+V^\mathrm{ad})\right)}_{\in E_+^\mathrm{s}(\lambda)} \\ & = &
V + \underbrace{\left(\frac{1}{\lambda} M^{-1} P_0^\mathrm{ad} + h^+_\mathrm{a}(\lambda)\right) P_0^\mathrm{u,ad}}_{=:h^+(\lambda)} V, \qquad V\in E_+^\mathrm{u}(\lambda),
\end{eqnarray*}
which is of the form stated in the lemma; note that $h^+_\mathrm{a}(\lambda)$ is analytic in $\lambda$ for $\lambda\in U_\delta(0)$. This completes the proof of the lemma.
\end{proof}

We can now extend the exponential dichotomies on $\mathbb{R}$ that exist for $\Re\lambda>0$ meromorphically into an open neighborhood of $\lambda=0$: our arguments closely follow those in \cite[\S4.2]{BSZ}. First, using the projections $P^\mathrm{s}_+(x,\lambda):=\Phi^\mathrm{s}_+(x,x,\lambda)$ and $P^\mathrm{u}_-(x,\lambda):=\Phi^\mathrm{u}_-(x,x,\lambda)$, it is easy to see that the operators
\begin{eqnarray}
\tilde{P}_+^\mathrm{s}(x,\lambda) & := & P_+^\mathrm{s}(x,\lambda) - \Phi_+^\mathrm{s}(x,0,\lambda) h_+(\lambda) \Phi_+^\mathrm{u}(0,x,\lambda),\qquad x\geq0 \label{e25} \\ \nonumber
\tilde{P}_-^\mathrm{u}(x,\lambda) & := & P_-^\mathrm{u}(x,\lambda) - \Phi_-^\mathrm{u}(x,0,\lambda) h_-(\lambda) \Phi_-^\mathrm{s}(0,x,\lambda),\qquad x\leq0
\end{eqnarray}
are bounded projections that depend meromorphically on $\lambda$ for $\lambda\in U_\delta(0)$. We can then define meromorphic continuations of the exponential dichotomies on $\mathbb{R}_\pm$ by setting
\begin{eqnarray}\label{e:ed+}
\tilde{\Phi}_+^\mathrm{s}(x,y,\lambda) & := & \Phi_+^\mathrm{s}(x,y,\lambda) \tilde{P}_+^\mathrm{s}(y, \lambda), \qquad\qquad x\geq y\geq0 \\ \nonumber
\tilde{\Phi}_+^\mathrm{u}(x,y,\lambda) & := & (1-\tilde{P}_+^\mathrm{s}(x,\lambda)) \Phi_+^\mathrm{u}(x,y,\lambda), \qquad y\geq x\geq0
\end{eqnarray}
on $\mathbb{R}_+$ and 
\begin{eqnarray}\label{e:ed-}
\tilde{\Phi}_-^\mathrm{u}(x,y,\lambda) & := & \Phi_-^\mathrm{u}(x,y,\lambda) \tilde{P}_-^\mathrm{u}(y, \lambda), \qquad\qquad 0\geq y\geq x \\ \nonumber
\tilde{\Phi}_-^\mathrm{s}(x,y,\lambda) & := & (1-\tilde{P}_-^\mathrm{u}(x,\lambda)) \Phi_-^\mathrm{s}(x,y,\lambda),\qquad 0\geq x\geq y
\end{eqnarray}
on $\mathbb{R}_-$ for $\lambda\in U_\delta(0)$.

Next, we show that $\tilde{P}_+^\mathrm{s}(0,\lambda)$ and $\tilde{P}_-^\mathrm{u}(0,\lambda)$ are complementary projections. Note that, if $P:Y\to Y$ is a bounded projection, and $h:\Rg(1-P)\to\Rg(P)$ is a bounded operator, then $\tilde{P}:=P-Ph(1-P)$ is a bounded projection with $\Rg(\tilde{P})=\Rg(P)$ and $\Ns(\tilde{P})=\graph h$: indeed, we have $\Rg(1-\tilde{P})=\Rg(1-P+Ph(1-P))=\graph h$. In particular, we see that the operators $\tilde{P}_+^\mathrm{s}(0,\lambda)$ and $\tilde{P}_-^\mathrm{u}(0,\lambda)$ satisfy
\[
\Ns(\tilde{P}_+^\mathrm{s}(0,\lambda)) = E_-^\mathrm{u}(\lambda) = \Rg(\tilde{P}_-^\mathrm{u}(0,\lambda)), \quad
\Ns(\tilde{P}_-^\mathrm{u}(0,\lambda)) = E_+^\mathrm{s}(\lambda) = \Rg(\tilde{P}_+^\mathrm{s}(0,\lambda))
\]
for $\lambda\neq0$. Thus,
\begin{equation}\label{e20}
\tilde{P}_+^\mathrm{s}(0,\lambda) = 1-\tilde{P}_-^\mathrm{u}(0,\lambda)
\end{equation}
for all $\lambda\neq0$. It follows from Lemma~\ref{lem:mer-ext} that the $\frac{1}{\lambda}$ terms in the Laurent series of the left and right-hand sides of \eqref{e20} are both given by $M^{-1}P_0^\mathrm{ad}$, so that \eqref{e20} holds for all $\lambda$.

Thus, the dichotomies in (\ref{e:ed+}) and (\ref{e:ed-}) fit together at $x=y=0$ and give the desired meromorphic exponential dichotomy on $\mathbb{R}$ for $\lambda$ near zero via
\begin{equation}\label{e:mes}
\Phi^\mathrm{s}(x,y, \lambda) := \left\{\begin{array}{lcl}
\tilde{\Phi}^\mathrm{s}_+(x,y,\lambda) & & x>y\geq0 \\[1ex]
\tilde{\Phi}_+^\mathrm{s}(x,0,\lambda)\tilde{\Phi}_-^\mathrm{s}(0,y, \lambda) & & x\geq0>y \\[1ex]
\tilde{\Phi}^\mathrm{s}_-(x,y,\lambda) & & 0>x>y
\end{array}\right.
\end{equation}
for $x>y$, and the analogous expression
\begin{equation}\label{e:meu}
\Phi^\mathrm{u}(x,y, \lambda) := \left\{\begin{array}{lcl}
\tilde{\Phi}^\mathrm{u}_+(x,y,\lambda) & & 0\leq x<y \\[1ex]
\tilde{\Phi}_-^\mathrm{u}(x,0,\lambda)\widetilde{\Phi}_+^\mathrm{u}(0,y, \lambda) & & x<0\leq y \\[1ex]
\tilde{\Phi}^\mathrm{u}_-(x,y,\lambda) & & x<y<0
\end{array}\right.
\end{equation}
for $x<y$. This completes the meromorphic extension of the exponential dichotomies on $\mathbb{R}$ for $\lambda\in U_\delta(0)$.

\subsection{Expanding the meromorphic exponential dichotomies on $\mathbb{R}$ near $\lambda=0$}\label{s:ed34}

In this section, we provide expansions of the exponential dichotomies we constructed in \S\ref{s:ed3} in the form of a Laurent series centered at $\lambda=0$. The next proposition summarizes these expansions.

\begin{proposition}\label{prop-estPhi}
Let $\Phi^\mathrm{s,u}(x,y,\lambda)$ be the exponential dichotomies constructed in \eqref{e:mes} and \eqref{e:meu}, then there exists a $\delta>0$ so that we have the Laurent-series expansion
\[
\Phi^\mathrm{s}(x,y,\lambda) =
\left\{\begin{array}{lcl} \displaystyle
-\frac1\lambda \sum_{j=1}^2 \rme^{\nu_+(\lambda) x} V_j(x) \left[ \langle W_j(y),\cdot\rangle_Y +
\lambda \rme^{-\nu_+(\lambda)y} \rmO(1) \right] & & \\[1ex] \qquad
+ \rme^{\nu_+(\lambda)(x-y)} \rmO(|\lambda| + \rme^{-\eta |y|}) + \rmO(\rme^{-\eta|x-y|}) & & x\geq y\geq 0 \\[2ex] \displaystyle
-\frac 1\lambda \sum_{j=1}^2 \rme^{\nu_+(\lambda) x} V_j(x) \langle W_j(y),\cdot\rangle_Y +
\rme^{\nu_+(\lambda)x} \rmO(\rme^{-\eta |y|}) + \rmO(\rme^{-\eta (|x-y|}) & & x\geq0>y \\[3ex] \displaystyle
-\frac 1\lambda \sum_{j=1}^2 \rme^{\nu_-(\lambda) x} V_j(x) \left[ \langle W_j(y),\cdot \rangle_Y +
\lambda \rme^{-\nu_-(\lambda)y} \rmO(1) \right] & & \\[1ex] \qquad
+ \rme^{\nu_-(\lambda)(x-y)} \rmO(|\lambda| + \rme^{-\eta |y|}) + \rmO(\rme^{-\eta|x-y|}) & & 0>x>y
\end{array}\right.
\]
for $\lambda\in U_\delta(0)$, where the $\rmO(\cdot)$ terms are operators in $L(Y)$ that depend analytically on $\lambda\in U_\delta(0)$. A similar expansion is true for $\Phi^\mathrm{u}(x,y,\lambda)$ when $x\leq y$.
\end{proposition}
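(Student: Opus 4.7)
The plan is to substitute the expansions from Lemma~\ref{lem-Phipm} for the half-line dichotomies and from Lemma~\ref{lem:mer-ext} for the gluing operator $h^+(\lambda)$ into the definitions \eqref{e:mes}--\eqref{e:meu} of the meromorphic dichotomies, and to match the resulting Laurent expansion term by term with the form stated in the proposition. I describe the argument in detail for the branch $x\geq y\geq 0$; the remaining three branches are handled analogously using the concatenation formulas in \eqref{e:mes}--\eqref{e:meu} and the $\mathbb{R}_-$ analogs in Lemma~\ref{lem-Phipm}.

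By \eqref{e:ed+} and \eqref{e25}, $\Phi^{\mathrm s}(x,y,\lambda) = \Phi^{\mathrm s}_+(x,y,\lambda) - \Phi^{\mathrm s}_+(x,0,\lambda)\, h^+(\lambda)\,\Phi^{\mathrm u}_+(0,y,\lambda)$. Substituting the Laurent expansion $h^+(\lambda)=(\tfrac1\lambda M^{-1}P_0^{\mathrm{ad}}+h^+_a(\lambda))P_0^{\mathrm{u,ad}}$ from Lemma~\ref{lem:mer-ext} and using $P_0^{\mathrm{ad}}P_0^{\mathrm{u,ad}}=P_0^{\mathrm{ad}}$, the analytic $h^+_a$ piece combines with the bound $\Phi^{\mathrm u}_+(0,y,\lambda)=O(e^{-\eta y})$ from Lemma~\ref{lem-Phipm} to contribute to the $e^{\nu_+(\lambda)(x-y)}O(|\lambda|+e^{-\eta|y|})+O(e^{-\eta|x-y|})$ remainder. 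This isolates the polar contribution
\[
-\frac{1}{\lambda}\,\Phi^{\mathrm s}_+(x,0,\lambda)\, M^{-1}P_0^{\mathrm{ad}}\, \Phi^{\mathrm u}_+(0,y,\lambda),
\]
with the remaining analytic piece $\Phi^{\mathrm s}_+(x,y,\lambda)$ supplying the constant-in-$\lambda$ part that fills out the bracket in the stated expansion.

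To identify this polar contribution with the claimed rank-two expression, I would use two structural facts. Since $M$ is the identity in the bases $\{V_j(0)\},\{W_j(0)\}$ by Lemma~\ref{lem-invertible} and \eqref{eadj3}, we have $M^{-1}P_0^{\mathrm{ad}}V=\sum_j\beta_j[V]V_j(0)$, where $\beta_j[V]$ is the $W_j(0)$-coefficient of $P_0^{\mathrm{ad}}V$. For the $x$-factor, the rank-one formula in Lemma~\ref{lem-Phipm}, the matching relation $V_j(x)-\alpha_j^+V^{\mathrm c}_+(x,0)=O(e^{-\eta x})$ (obtained because $V_j(x)$ converges at $+\infty$ to the one-dimensional slow Floquet eigenfunction of the asymptotic system), and analyticity in $\lambda$ yield
\[
\Phi^{\mathrm s}_+(x,0,\lambda)V_j(0) = e^{\nu_+(\lambda)x}V_j(x) + e^{\nu_+(\lambda)x}O(\lambda) + O(e^{-\eta x}).
\]
For the $y$-factor, applying the constancy of the pairing $\langle W_j(x),\widetilde V(x)\rangle_Y$ along the $\lambda=0$ flow with $\widetilde V(x):=\Phi^{\mathrm u}_+(x,y,0)V$, together with the fact that $W_j$ annihilates $\Rg P^{\mathrm s}_+$ and, by a parallel argument on $\mathbb{R}_-$, also $E_0^{\mathrm s}\oplus E_0^{\mathrm u}$ (because $W_j(x)\to 0$ exponentially as $|x|\to\infty$), gives the identity $\langle W_j(0),\Phi^{\mathrm u}_+(0,y,0)V\rangle_Y=\langle W_j(y),V\rangle_Y$. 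Careful bookkeeping through the splitting $Y=E_0^{\mathrm{pt}}\oplus E_0^{\mathrm s}\oplus E_0^{\mathrm u}\oplus E_0^{\mathrm{ad}}$ combined with the normalization in \eqref{eadj3} then identifies $\beta_j(y,0)[V]$ with this pairing, and analyticity in $\lambda$ produces the correction $\lambda e^{-\nu_+(\lambda)y}O(1)$ inside the bracket, the exponential factor reflecting the slow Floquet growth of the unstable evolution at $\lambda\neq 0$.

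The main technical obstacle is the clean identification $\beta_j(y,0)[V]=\langle W_j(y),V\rangle_Y$: although \eqref{def-W12} gives pointwise orthogonality $\langle W_i(x),V_j(x)\rangle_Y=0$, the dual functional extracting the $W_j(0)$-coefficient under $P_0^{\mathrm{ad}}$ is determined by the global four-fold splitting of $Y$ rather than by a pointwise inner-product duality, so the match must be routed through flow invariance of the adjoint pairing together with the specific normalization $M=I$ from Lemma~\ref{lem-invertible}. A secondary subtlety is the rebalancing of the rank-one bounded piece $V_+^{\mathrm c}(x,0)\langle W_+^{\mathrm c}(y,0),\cdot\rangle_Y$ coming from $\Phi^{\mathrm s}_+(x,y,0)$ into the rank-two $\sum_j V_j(x)\cdot O(1)$ form inside the bracket, which is accomplished using the same matching relations at $x,y=+\infty$ that tie $V^{\mathrm c}_+$ and $W^{\mathrm c}_+$ to the defect basis $\{V_j,W_j\}$.
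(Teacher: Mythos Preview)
Your proposal is correct and follows essentially the same route as the paper's proof. Both start from the decomposition $\Phi^{\mathrm s}=\Phi_+^{\mathrm s}-\Phi_+^{\mathrm s}\,h^+\,\Phi_+^{\mathrm u}$, both isolate the polar part through $M^{-1}P_0^{\mathrm{ad}}=\sum_j V_j(0)\langle W_j(0),\cdot\rangle_Y$, and both use the same expansion $\Phi_+^{\mathrm s}(x,0,\lambda)V_j(0)=\rme^{\nu_+(\lambda)x}(V_j(x)+\rmO(\lambda))+\lambda\,\rmO(\rme^{-\eta x})$ for the $x$-factor.

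The one place where the paper is more direct is the $y$-factor. Instead of routing through flow-invariance of the pairing and separately verifying that $W_j$ annihilates $\Rg P_+^{\mathrm s}$, the paper simply records the adjoint identity $\Phi_+^{\mathrm u}(0,y,\lambda)^*W_j(0)=W_j(y)+\lambda\,\rmO(\rme^{-\eta|y|})$. At $\lambda=0$ this holds because $W_j$ is an exponentially decaying adjoint solution and therefore $W_j(0)\in(\Rg P_+^{\mathrm s}(0,0))^\perp=\Rg P_+^{\mathrm u}(0,0,0)^*$, so the adjoint stable evolution carries $W_j(0)$ to $W_j(y)$; the $\lambda$-correction then follows from analyticity together with the uniform bound $\|\Phi_+^{\mathrm u}(0,y,\lambda)\|=\rmO(\rme^{-\eta y})$. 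This single observation dissolves the ``main technical obstacle'' you flag, since it gives $\langle W_j(0),\Phi_+^{\mathrm u}(0,y,\lambda)\,\cdot\,\rangle_Y=\langle W_j(y),\cdot\rangle_Y+\lambda\,\rmO(\rme^{-\eta|y|})$ directly, with no detour through the four-fold splitting. Your pairing argument reaches the same conclusion but less economically.
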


\begin{remark}
We will use the expansions in Proposition~\ref{prop-estPhi} below to derive pointwise spatio-temporal bounds on the Green's function. In this analysis, it will be crucial that the terms in the Laurent series that are constant in $\lambda$, which will correspond to Gaussian behavior in the Green's function, are multiplied by either $V_j(x)$ or $\rme^{-\eta|y|}$ as this will show that the corresponding terms in the Green's functions are either exponentially localized in space or behave as derivatives of moving Gaussians.
\end{remark}

\begin{proof}
We will prove the proposition for $\Phi^\mathrm{s}(x,y,\lambda)$ as the proof for $\Phi^\mathrm{u}(x,y,\lambda)$ is very similar. Pick $\tilde\eta$ so that $0<\tilde\eta<\eta$.

First, we provide two expansions that we will use frequently in the subsequent analysis. Lemma~\ref{lem-Phipm} implies that
\[
\Phi_+^\mathrm{s}(x,0,\lambda) V_j(0) = a_j(\lambda) \rme^{\nu_+(\lambda)x} V_+^c(x,\lambda) + V^\mathrm{ss}_j(x,\lambda),\qquad x\geq0
\]
for analytic functions $a_j(\lambda)$ and $V^\mathrm{ss}_j(x,\lambda)$ with $|V^\mathrm{ss}_j(x,\lambda)|\leq C\rme^{-\eta |x|}$, where
\[
V_j(x) = a_j(0) V_+^c(x,0) + V^\mathrm{ss}_j(x,0).
\]
Hence, we have
\begin{eqnarray}
\Phi_+^\mathrm{s}(x,0,\lambda) V_j(0) & = &
a_j(\lambda) \rme^{\nu_+(\lambda)x} V_+^c(x,\lambda) + V^\mathrm{ss}_j(x,\lambda) \nonumber \\ \nonumber & = &
\rme^{\nu_+(\lambda)x} \left( a_j(\lambda) V_+^c(x,\lambda) + V^\mathrm{ss}_j(x,\lambda) \right)
+ \lambda \underbrace{\frac{1}{\lambda} \left(1-\rme^{\nu_+(\lambda)x}\right)V^\mathrm{ss}_j(x,\lambda)}_{=\rmO(\rme^{-\tilde\eta|x|})} \\ \label{e30} & = &
\rme^{\nu_+(\lambda)x} \left( V_j(x) + \rmO(\lambda) \right) + \lambda \rmO(\rme^{-\tilde\eta|x|}), \qquad x\geq0
\end{eqnarray}
where the terms on the right-hand side are analytic in $\lambda\in U_\delta(0)$. The second expansion is
\begin{equation}\label{e31}
\Phi_+^\mathrm{u}(0,y,\lambda)^* W_j(0) = W_j(y) + \lambda \rmO(\rme^{-\eta|y|}) = \rmO(\rme^{-\eta|y|}), \qquad y\geq0
\end{equation}
which follows directly from Lemma~\ref{lem-Phipm} and \eqref{eW12ed}. We can now prove the claimed estimates for $\Phi^\mathrm{s}(x,y,\lambda)$ for $x>y$, which we do separately for the three different regimes shown in the proposition.

\paragraph{The case $\mathbf{x\geq y\geq0}$:}

The equations \eqref{e25}, \eqref{e:ed+}, and \eqref{e:mes} imply that
\begin{equation}\label{e33}
\Phi^\mathrm{s}(x,y,\lambda) = \Phi_+^\mathrm{s}(x,y,\lambda) - \Phi_+^\mathrm{s}(x,0,\lambda) h^+(\lambda) \Phi_+^\mathrm{u}(0,y,\lambda)
\end{equation}
We estimate the two terms on the right-hand side separately. First, we have
\begin{eqnarray}\label{e32}
\Phi_+^\mathrm{s}(x,y,\lambda) & = & \rme^{\nu_+^c(\lambda)(x-y)} V_+^c(x,\lambda) \langle W_+^c(y,\lambda),\cdot\rangle_Y + \rmO(\rme^{-\eta|x-y|}) \\ \nonumber & = &
\rme^{\nu_+^c(\lambda)(x-y)} (V_+^c(x,0) + \rmO(\lambda)) \rmO(1) + \rmO(\rme^{-\eta|x-y|}) \\ \nonumber & = &
\rme^{\nu_+^c(\lambda)(x-y)} \sum_{j=1}^{2} b_j V_j(x,0) \rmO(1) + \rmO(\lambda\rme^{\nu_+^c(\lambda)(x-y)}+\rme^{-\eta|x-y|})
\end{eqnarray}
for some $b_j\in\mathbb{C}$, where we used that $V_+^c(0,0)$ is a linear combination of $V_1(0)$ and $V_2(0)$ by Lemma~\ref{lem-Phipm}. Next, we consider the second term in \eqref{e33}. Lemma~\ref{lem:mer-ext} shows that
\begin{eqnarray}\label{e34}
\Phi_+^\mathrm{s}(x,0,\lambda) h^+(\lambda) \Phi_+^\mathrm{u}(0,y,\lambda) & = &
\Phi_+^\mathrm{s}(x,0,\lambda) \left(\frac{1}{\lambda} M^{-1} P_0^\mathrm{ad} + h^+_a(\lambda)P_0^\mathrm{u,ad}
\right) \Phi_+^\mathrm{u}(0,y,\lambda) \\ \nonumber & = &
\frac{1}{\lambda} \Phi_+^\mathrm{s}(x,0,\lambda) M^{-1} P_0^\mathrm{ad} \Phi_+^\mathrm{u}(0,y,\lambda)
+ \rme^{\nu_+^c(\lambda)x} \rmO(\rme^{-\eta|y|}).
\end{eqnarray}
Noting that Lemma~\ref{lem:mer-ext} and \eqref{eadj3} imply that
\[
M^{-1} P_0^\mathrm{ad} = \sum_{j=1}^{2} V_j(0) \langle W_j(0),\cdot\rangle_Y,
\]
we therefore have
\begin{eqnarray}\label{e35}
\lefteqn{\frac{1}{\lambda} \Phi_+^\mathrm{s}(x,0,\lambda) M^{-1} P_0^\mathrm{ad} \Phi_+^\mathrm{u}(0,y,\lambda) =
\frac{1}{\lambda} \sum_{j=1}^{2} \Phi_+^\mathrm{s}(x,0,\lambda) V_j(0) \langle \Phi_+^\mathrm{u}(0,y,\lambda)^* W_j(0),\cdot\rangle_Y} \\ \nonumber & \stackrel{\eqref{e30}-\eqref{e31}}{=} &
\frac{1}{\lambda} \sum_{j=1}^{2}
\left( \rme^{\nu_+(\lambda)x} (V_j(x)+\rmO(\lambda)) + \lambda \rmO(\rme^{-\tilde\eta|x|}) \right)
\left( \langle W_j(y),\cdot\rangle_Y + \rmO(\lambda\rme^{-\eta|y|}) \right) \\ \nonumber & = &
\sum_{j=1}^{2}
\left( \frac{1}{\lambda} \rme^{\nu_+(\lambda)x} V_j(x) + \rme^{\nu_+(\lambda)x} \rmO(1) +\rmO(\rme^{-\tilde\eta|x|}) \right)
\left( \langle W_j(y),\cdot\rangle_Y + \rmO(\lambda\rme^{-\eta|y|}) \right) \\ \nonumber & = &
\frac{1}{\lambda} \sum_{j=1}^{2} \rme^{\nu_+(\lambda)x} V_j(x) \langle W_j(y),\cdot\rangle_Y
+ \rme^{\nu_+(\lambda)x}\rmO(\rme^{-\eta|y|})+\rmO(\rme^{-\tilde\eta|x-y|}).
\end{eqnarray}
Substituting \eqref{e35} into \eqref{e34}, we obtain
\begin{equation}\label{e36}
\Phi_+^\mathrm{s}(x,0,\lambda) h^+(\lambda) \Phi_+^\mathrm{u}(0,y,\lambda) =
\frac{1}{\lambda} \sum_{j=1}^{2} \rme^{\nu_+(\lambda)x} V_j(x) \langle W_j(y),\cdot\rangle_Y
+ \rme^{\nu_+(\lambda)x}\rmO(\rme^{-\eta|y|}) + \rmO(\rme^{-\tilde\eta|x-y|}).
\end{equation}
Substituting this expansion and \eqref{e32} into \eqref{e33}, and replacing $\tilde\eta$ by $\eta$, we arrive at the claimed expansion
\begin{eqnarray*}
\Phi_+^\mathrm{s}(x,y,\lambda) & = &
-\frac1\lambda \sum_{j=1}^2 \rme^{\nu_+(\lambda) x} V_j(x) \left[ \langle W_j(y),\cdot\rangle_Y +
\lambda \rme^{-\nu_+(\lambda)y} \rmO(1) \right] \\ & &
+ \rme^{\nu_+(\lambda)(x-y)} \rmO(|\lambda| + \rme^{-\eta|y|}) + \rmO(\rme^{-\eta|x-y|}).
\end{eqnarray*}

\paragraph{The case $\mathbf{0>x>y}$:}

In this case, \eqref{e25}, \eqref{e:ed-}, and \eqref{e:mes} imply that
\[
\Phi^\mathrm{s}(x,y,\lambda) = \Phi_-^\mathrm{s}(x,y,\lambda) + \Phi_-^\mathrm{u}(x,0,\lambda) h^-(\lambda) \Phi_-^\mathrm{s}(0,y,\lambda).
\]
Lemma~\ref{lem-Phipm} shows that the first term satisfies $\|\Phi_-^\mathrm{s}(x,y,\lambda)\|=\rmO(\rme^{-\eta|x-y|})$. The second term can be estimated in the same way as \eqref{e36}, except that $h^-(\lambda)$ contributes an additional factor $-1$; see Lemma~\ref{lem:mer-ext}. This completes the case $0>x>y$.

\paragraph{The case $\mathbf{x\geq0>y}$:}

Equations \eqref{e25}--\eqref{e:mes} show that
\begin{eqnarray*}
\Phi^\mathrm{s}(x,y,\lambda) & = &
\tilde\Phi_+^\mathrm{s}(x,0,\lambda) \tilde\Phi_-^\mathrm{s}(0,y,\lambda) \\ & = &
\Phi_+^\mathrm{s}(x,0,\lambda)\tilde{P}_+^\mathrm{s}(0,\lambda) 
(1-\tilde{P}_-^\mathrm{u}(0,\lambda))\Phi_-^\mathrm{s}(0,y,\lambda) \\ & = &
\Phi_+^\mathrm{s}(x,0,\lambda) \tilde{P}_+^\mathrm{s}(0,\lambda) \Phi_-^\mathrm{s}(0,y,\lambda) \\ & = &
\Phi_+^\mathrm{s}(x,0,\lambda)
\left(P_+^\mathrm{s}(0,\lambda) - \Phi_+^\mathrm{s}(0,0,\lambda) h_+(\lambda) \Phi_+^\mathrm{u}(0,0,\lambda)\right)
\Phi_-^\mathrm{s}(0,y,\lambda) \\ & = &
\Phi_+^\mathrm{s}(x,0,\lambda) \Phi_-^\mathrm{s}(0,y,\lambda) 
- \Phi_+^\mathrm{s}(x,0,\lambda) h_+(\lambda) P_+^\mathrm{u}(0,\lambda) \Phi_-^\mathrm{s}(0,y,\lambda) \\ & = &
\Phi_+^\mathrm{s}(x,0,\lambda) \Phi_-^\mathrm{s}(0,y,\lambda) 
- \Phi_+^\mathrm{s}(x,0,\lambda) h_+(\lambda) (P_+^\mathrm{u}(0,0)+\rmO(\lambda)) \Phi_-^\mathrm{s}(0,y,\lambda) 
\end{eqnarray*}
with $\|\Phi_-^\mathrm{s}(0,y,\lambda)\|=\rmO(\rme^{-\eta|y|})$. Hence, we obtain
\[
\Phi^\mathrm{s}(x,y,\lambda) = -\frac{1}{\lambda} \sum_{j=1}^{2} \Phi_+^\mathrm{s}(x,0,\lambda) V_j(0) \langle\Phi_-^\mathrm{s}(0,y,\lambda)^* W_j(0),\cdot\rangle_Y + \rme^{\nu_+(\lambda)x} \rmO(\rme^{-\eta|y|}) + \rmO(\rme^{-\eta|x-y|})
\]
We can now use \eqref{e30} and the fact that
\[
\Phi_-^\mathrm{s}(0,y,\lambda)^* W_j(0) = W_j(y) + \lambda \rmO(\rme^{-\eta|y|})
\]
to conclude the claimed expansion of $\Phi^\mathrm{s}(x,y,\lambda)$ in the regime $x\geq0>y$.
\end{proof}


\section{Constructing the resolvent kernel}\label{s:resolvent}

In this section, we shall construct the resolvent kernel $\widetilde{G}_{\lambda} (x,t;y,s)$, which is $2\pi$-periodic in $t$ and solves 
\begin{equation}\label{Green-eqs}
(\lambda + \partial_t - L) \widetilde{G}_{\lambda} (x,t;y,s) =  \delta(x-y) \delta(t-s)
\end{equation}
for each fixed $(s,y)$, with $L = \partial_x^2 + c_\mathrm{d} \partial_x + f_u(\bar{u})$. Having constructed the exponential dichotomy $\Phi^{\mathrm{s,u}}(x,y,\lambda)$ of the corresponding spatial dynamical system \eqref{eqs-V}, the resolvent kernel $\widetilde{G}_{\lambda} (x,t;y,s)$ can be formally constructed via the standard variation-of-constants principle; see Lemma \ref{lem-PhiSolver}. However, the source term involving $\delta(x-y) \delta(t-s)$ does not belong to the function space $Y$ (see \eqref{spaceYY}), and we first need to take care of the delta functions. 

To proceed, let us introduce $H_{\lambda,0}(x,t)$ to be the time-periodic solution to the problem 
\begin{equation}\label{def-Hlambda0}
(1+\lambda + \partial_t - D\partial_x^2 - c_\mathrm{d}\partial_x ) H_{\lambda,0}(x,t) =  \delta(x) \delta(t)
\end{equation}
and construct the resolvent kernel $\widetilde{G}_\lambda(x,t;y,s)$ of \eqref{Green-eqs} in the following form 
\begin{equation}\label{constr-Glambda} \widetilde{G}_{\lambda}(x,t;y,s) = H_{\lambda,0}(x-y,t-s) + \widetilde G_{\lambda,1}(x,t;y,s),\end{equation}
where $\widetilde G_{\lambda,1}(x,t;y,s)$ satisfies 
\begin{equation}\label{def-resGstar}
(\lambda + \partial_t - L) \widetilde G_{\lambda,1}(x,t;y,s) =  (1+f_u(\bar{u}(x,t))) H_{\lambda,0}(x-y, t-s). 
\end{equation}

The problem \eqref{def-Hlambda0} has constant coefficients and will be solved via Fourier series; see Subsection \ref{sec-H0}, below. Certainly, the right hand side of the problem \eqref{def-resGstar} is more regular than that of \eqref{Green-eqs}, though it is still not regular enough to be in $H^\frac34(\mathbb{T})$, as required in the function space $Y$. We will then iterate the previous step by introducing another auxiliary resolvent kernel $H_{\lambda, 1}(x,t;y,s) $ that takes care of the source on the right hand side of \eqref{def-resGstar}, leaving a more regular source term in the resulting equation for the remainder; see Subsection \ref{sec-H1}.

Let us now state the main results of this section. 

\begin{proposition}[Low-frequency bounds]\label{prop-reskernel} Let $\widetilde{G}_{\lambda} (x,t;y,s)$ be the resolvent kernel of $\lambda + \partial_t - L$, which is constructed in the form \eqref{constr-Glambda}. Then, for $\lambda \in \Sigma \cup B_r(0)$, we have
$$
\begin{aligned}
\widetilde{G}_{\lambda,1} (x,t;y,s) &=  - \frac 1\lambda \sum_{j=1}^2 \rme^{\nu_+(\lambda) x} P_1 V_j(x) \Big[ \langle \tilde\psi_j(y,s),\cdot \rangle 
+\lambda \rme^{- \nu_+(\lambda)y } \rmO(1)\Big]  \\&\quad + \rme^{\nu_+(\lambda)(x-y)} \rmO(|\lambda| + \rme^{-\eta |y|})  + \rmO(\rme^{-\eta|x-y|})
\end{aligned}$$
for $0\le y \le x$, 
$$
\begin{aligned}
\widetilde{G}_{\lambda,1}(x,t;y,s) &=  \frac 1\lambda \sum_{j=1}^2 \rme^{\nu_-(\lambda) x} P_1 V_j(x) \Big[ \langle \tilde\psi_j(y,s),\cdot \rangle
+\lambda \rme^{- \nu_-(\lambda)y } \rmO(1)\Big]  \\&\quad  + \rme^{\nu_-(\lambda)(x-y)} \rmO(|\lambda| + \rme^{-\eta |y|}) + \rmO(\rme^{-\eta|x-y|})
\end{aligned}$$
for $y \le x\le 0$, and 
$$
\begin{aligned}
\widetilde{G}_{\lambda,1}(x,t;y,s) &= - \frac 1\lambda \sum_{j=1}^2 \rme^{\nu_+(\lambda) x} P_1 V_j(x)  \langle \tilde\psi_j(y,s),\cdot \rangle  
+ \rme^{\nu_+(\lambda)x} \rmO(\rme^{-\eta |y|})  + \rmO(\rme^{-\eta (|x-y|})
\end{aligned}$$
for $y\le 0 \le x$, where $\tilde\psi_j(y,t)$ are functions satisfying $\tilde\psi(y,s)=\rmO(\rme^{-\eta |y|})$, $P_1V_1(x) = \bar{u}_x(x,t)$ and $P_1V_2(x) = \bar{u}_t(x,t)$. Here $\rmO(\cdot)$ are estimated in $L^\infty(\mathbb{R}\times \mathbb{T})$. Similar bounds hold 
for
$x\le y$. 
\end{proposition}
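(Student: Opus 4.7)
The plan is to convert the inhomogeneous resolvent equation \eqref{def-resGstar} into the spatial-dynamics framework of \S\ref{s:sd}, invoke the variation-of-constants formula of Lemma~\ref{lem-PhiSolver} using the meromorphic exponential dichotomies $\Phi^{\mathrm{s,u}}(x,z,\lambda)$ constructed in \S\ref{s:ed3}, and then substitute the Laurent expansions from Proposition~\ref{prop-estPhi}. Concretely, setting $V := (\widetilde G_{\lambda,1},\partial_x\widetilde G_{\lambda,1})^t$, equation \eqref{def-resGstar} becomes $V_x = A(x,\lambda)V + F(x,\cdot;y,s)$ with
\[
F(z,\cdot;y,s) := \bigl(0,\ D^{-1}(1+f_u(\bar u(z,\cdot)))\,H_{\lambda,0}(z-y,\cdot-s)\bigr)^t,
\]
and the properties of $H_{\lambda,0}$ to be established in Subsection~\ref{sec-H0} will give $F(\cdot;y,s)\in C^0(\mathbb{R};Y)$ together with the pointwise bound $\|F(z,\cdot;y,s)\|_Y \leq C\rme^{-\eta|z-y|}$, so that the source is exponentially localized about $z=y$. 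For $\lambda \in (\Sigma \cup B_r(0))\setminus\{0\}$, meromorphic continuation of Lemma~\ref{lem-PhiSolver} in $\lambda$ then yields
\[
V(x) = \int_{-\infty}^x \Phi^\mathrm{s}(x,z,\lambda)F(z)\,\rmd z - \int_x^{\infty} \Phi^\mathrm{u}(x,z,\lambda)F(z)\,\rmd z,
\]
and $\widetilde G_{\lambda,1}(x,t;y,s) = P_1 V(x)(t)$.

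In each of the three spatial regimes I would split both $z$-integrals at $z=0$ so that the appropriate branch of Proposition~\ref{prop-estPhi} applies on each subinterval, and then insert that expansion. For $0\leq y\leq x$, for example, the dominant contribution comes from $\int_0^x \Phi^\mathrm{s}(x,z,\lambda)F(z)\,\rmd z$; substituting the top branch of Proposition~\ref{prop-estPhi} and defining
\[
\tilde\psi_j(y,s) := \int_\mathbb{R} \langle W_j(z),F(z,\cdot;y,s)\rangle_Y\,\rmd z
\]
extracts the claimed leading pole $-\frac{1}{\lambda}\sum_j \rme^{\nu_+(\lambda)x}P_1V_j(x)\langle\tilde\psi_j(y,s),\cdot\rangle$. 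The bound $|\tilde\psi_j(y,s)|\leq C\rme^{-\eta|y|}$ follows from combining $|W_j(z)|_Y\leq C\rme^{-\eta|z|}$ with $\|F(z,\cdot;y,s)\|_Y \leq C\rme^{-\eta|z-y|}$ via the elementary estimate $\int\rme^{-\eta(|z|+|z-y|)}\,\rmd z \leq C(1+|y|)\rme^{-\eta|y|}$, at the mild cost of shrinking $\eta$, and the identifications $P_1V_1=\bar u_x$, $P_1V_2=\bar u_t$ are immediate from \eqref{eV12}. The analytic remainders in Proposition~\ref{prop-estPhi}, namely $\rme^{\nu_+(\lambda)(x-z)}\rmO(|\lambda|+\rme^{-\eta|z|}) + \rmO(\rme^{-\eta|x-z|})$, when convolved against the source that is localized at $z=y$, collapse to $\rme^{\nu_+(\lambda)(x-y)}\rmO(|\lambda|+\rme^{-\eta|y|}) + \rmO(\rme^{-\eta|x-y|})$, and the $\Phi^\mathrm{s}$-integral over $z<0$ as well as the full $\Phi^\mathrm{u}$-integral only contribute such error terms because $F$ is exponentially small in $|y|$ there. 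The inner correction $\lambda\rme^{-\nu_+(\lambda)y}\rmO(1)$ in the square bracket arises from propagating the $\lambda\rme^{-\nu_+(\lambda)z}\rmO(1)$ piece of Proposition~\ref{prop-estPhi} through this localization. The cases $y\leq x\leq 0$ and $y\leq 0\leq x$ are treated identically using the second and third branches of Proposition~\ref{prop-estPhi}; the absence of a $\lambda\rme^{-\nu_+(\lambda)y}\rmO(1)$ correction in the mixed case mirrors the corresponding absence in that branch of Proposition~\ref{prop-estPhi}.

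The main technical obstacle will be the bookkeeping in the convolution step: verifying that the weights $\rme^{\nu_\pm(\lambda)(x-z)}$ and $\rme^{-\eta|x-z|}$ coming from Proposition~\ref{prop-estPhi}, when integrated against the source decay $\rme^{-\eta|z-y|}$ separately over the three subregions $z<0$, $0\leq z\leq x$, and $z>x$, really reassemble into the form stated with $y$ in place of the dummy variable $z$, and without picking up any extra $\lambda^{-1}$ pole or exponentially growing factor. A secondary subtlety is that the definition of $\tilde\psi_j(y,s)$ must be identified consistently across the three spatial regimes so that the leading-order residue is genuinely carried by a single object, rather than by an accidental coincidence of notation in each case; this will require matching of the pole structure at $z=0$ where the branches of the expansion meet.
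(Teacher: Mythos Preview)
Your overall strategy---rewriting the resolvent equation as a spatial-dynamics system, applying the variation-of-constants formula with the meromorphic dichotomies, and substituting the Laurent expansions of Proposition~\ref{prop-estPhi}---is exactly right, and your treatment of the convolution bookkeeping matches what the paper does in Lemma~\ref{lem-G1}. However, there is a genuine gap at the very first step.

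You assert that the source $F(z,\cdot;y,s)=(0,\,D^{-1}(1+f_u(\bar u))H_{\lambda,0}(z-y,\cdot-s))^t$ lies in $C^0(\mathbb{R};Y)$. It does not. The space $Y=H^{5/4}(\mathbb{T})\times H^{3/4}(\mathbb{T})$ requires the second component to be in $H^{3/4}(\mathbb{T})$, but Lemma~\ref{lem-H0} gives only $|\widehat H^k_{\lambda,0}(x)|\lesssim (1+|k|)^{-1/2}$, so $H_{\lambda,0}(x,\cdot)\in H^s(\mathbb{T})$ only for $s<0$. Multiplication by $1+f_u(\bar u)\in H^1(\mathbb{T})$ cannot repair this, so $F\notin Y$ and Lemma~\ref{lem-PhiSolver} is inapplicable as stated.

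The paper fixes this by inserting an additional regularization layer: it introduces an auxiliary kernel $H_{\lambda,1}$ solving the constant-coefficient problem
\[
(1+\lambda+\partial_t-\partial_x^2-c_\mathrm{d}\partial_x)H_{\lambda,1}=(1+f_u(\bar u))H_{\lambda,0},
\]
shows in Lemma~\ref{lem-Hj} that $H_{\lambda,1}(x,\cdot;y,s)\in H^{1-\varepsilon}(\mathbb{T})$ with $\|H_{\lambda,1}\|\lesssim \rme^{-\frac12|x-y|}$, and then writes $\widetilde G_{\lambda,1}=H_{\lambda,1}+G_{\lambda,1}$. The remaining piece $G_{\lambda,1}$ now satisfies a spatial-dynamics system whose source $(0,-(1+f_u(\bar u))H_{\lambda,1})^t$ \emph{is} in $Y$, and at that point your argument goes through verbatim (this is the content of Lemma~\ref{lem-G1}). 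The term $H_{\lambda,1}$ itself is $\rmO(\rme^{-\eta|x-y|})$ and is simply absorbed into the last error term of the proposition. So the missing ingredient is precisely this one smoothing iteration; everything else in your proposal is correct.
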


\begin{proposition}[High-frequency bounds]\label{prop-HFreskernel} Let $\widetilde{G}_{\lambda} (x,t;y,s)$ be the resolvent kernel of $\lambda + \partial_t - L$, which is constructed in the form \eqref{constr-Glambda}. Then, for $\lambda$ 
such 
that $|\Im\lambda|\le \frac 12$ and $\Re\lambda \gg1$, there holds
$$\begin{aligned}
|\widetilde  G_{\lambda,1}(x;y,s)|
 \le C_\varepsilon(1+\Re \lambda )^{-\frac12 + \varepsilon} \rme^{-\frac 12\sqrt{1+\Re \lambda }  |x-y|}
  \end{aligned}
 $$
 for arbitrary $x,y\in \mathbb{R}$, 
$\varepsilon \in (0,\frac 12]$, and some finite constant $C_\varepsilon$ which might blow up as $\varepsilon \to 0$.  
\end{proposition}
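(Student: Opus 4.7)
The plan is to treat multiplication by $1+f_u(\bar u)$ as a bounded perturbation of the constant-coefficient operator whose Green's function is $H_{\lambda,0}$, expand $\widetilde{G}_{\lambda,1}$ as a Neumann series, and exploit the smallness of $H_{\lambda,0}$ at high frequency to obtain both convergence of the series and the claimed pointwise bound. The restriction $|\Im\lambda|\le\frac12$ serves only to select a fundamental strip for the $t$-Fourier variable.

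First I would establish sharp bounds on $H_{\lambda,0}$ itself. Expanding $H_{\lambda,0}(x,t)=\sum_{k\in\mathbb{Z}}\rme^{\rmi kt}h_k(x)$, each coefficient $h_k$ is the Green's function of the constant-coefficient ODE
\begin{equation*}
\bigl(1+\lambda+\rmi k - D\partial_x^2 - c_\mathrm{d}\partial_x\bigr)h_k(x) = (2\pi)^{-1}\delta(x),
\end{equation*}
and can be written explicitly in the form $h_k(x)=c_k(\lambda)\,\rme^{-\alpha_k(\lambda)|x|+c_\mathrm{d}x/(2D)}$ with $|c_k(\lambda)|\lesssim|1+\lambda+\rmi k|^{-1/2}$ and $\Re\alpha_k(\lambda)\gtrsim\sqrt{1+\Re\lambda+|k|}$. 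Summing $|h_k(x)|$ over $k$ by comparing the sum to an integral and substituting $v=\sqrt{1+\Re\lambda+|k|}$ yields the pointwise bound
\begin{equation*}
|H_{\lambda,0}(x,t)|\le C\bigl(|x|^{-1}+(1+\Re\lambda)^{-1/2}\bigr)\rme^{-c\sqrt{1+\Re\lambda}\,|x|},
\end{equation*}
together with the mode-by-mode bound $\|h_k\|_{L^1_x}\lesssim(1+|\lambda|+|k|)^{-1}$ that will be needed in the iteration.

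Writing $\lambda+\partial_t-L=A-B$ with $A:=1+\lambda+\partial_t-D\partial_x^2-c_\mathrm{d}\partial_x$ (whose kernel is $H_{\lambda,0}$) and $B:=1+f_u(\bar u)$ (bounded multiplication), equation \eqref{def-resGstar} reads $(A-B)\widetilde{G}_{\lambda,1}=BH_{\lambda,0}$, which iterates to
\begin{equation*}
\widetilde{G}_{\lambda,1} = \sum_{n\ge 1}(A^{-1}B)^n H_{\lambda,0}.
\end{equation*}
I would analyse the leading term
\begin{equation*}
A^{-1}BH_{\lambda,0}(x,t;y,s) = \int H_{\lambda,0}(x-y',t-s')(1+f_u(\bar u(y',s')))H_{\lambda,0}(y'-y,s'-s)\,\rmd y'\,\rmd s'
\end{equation*}
mode-by-mode in the $t$-Fourier variable: on the $k$-th mode, the $\xi$-multiplier becomes $(1+\lambda+\rmi k + D\xi^2 - \rmi c_\mathrm{d}\xi)^{-2}$, and a double-pole residue computation gains an extra factor $|1+\lambda+\rmi k|^{-1/2}$ over $h_k$ itself. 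Summing over $k$ exactly as in the first step produces
\begin{equation*}
|A^{-1}BH_{\lambda,0}(x,t;y,s)|\le C_\varepsilon(1+\Re\lambda)^{-1/2+\varepsilon}\rme^{-\tfrac12\sqrt{1+\Re\lambda}\,|x-y|},
\end{equation*}
where the $\varepsilon$-loss absorbs a logarithmic factor produced when $|x-y|\ll(1+\Re\lambda)^{-1/2}$, and the factor $\tfrac12$ in the exponent leaves a cushion of decay for higher iterates.

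Finally, convergence of the Neumann series for $\Re\lambda$ sufficiently large follows from Parseval, which gives $\|A^{-1}B\|_{L^2\to L^2}\le C(1+\Re\lambda)^{-1}$, so every additional factor gains a full power of $1+\Re\lambda$ in $L^2$; transferring back to pointwise estimates (either by iterating the pointwise bound on exponentially weighted functions at the cost of at most another $\varepsilon$-loss per iterate, or via a Sobolev-type embedding) shows that the full series is dominated by its first term, yielding the proposition. The main obstacle will be the first-iterate estimate: because $H_{\lambda,0}$ has a $|x|^{-1}$ singularity at the origin and is therefore not in $L^1_x$, Young's inequality cannot be invoked directly, so one must either work mode-by-mode with the explicit residue formula for $h_k\ast h_k$ or pass through mixed norms such as $L^1_xL^2_t$, carefully handling the logarithm at the origin that ultimately forces the $\varepsilon$ in the final exponent.
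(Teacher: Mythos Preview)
Your Neumann-series idea is correct and is, at heart, the same mechanism the paper uses: smallness of the constant-coefficient kernel $H_{\lambda,0}$ at high frequency makes the multiplication by $1+f_u(\bar u)$ a perturbation, and iteration yields the bound. The paper, however, organizes the argument differently and thereby sidesteps the two difficulties you flag at the end.

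Rather than summing over temporal modes to obtain the singular kernel $H_{\lambda,0}(x,t)$ (with its $|x|^{-1}$ blow-up) and then convolving, the paper stays mode-by-mode throughout: it writes $\widetilde G_{\lambda,1}=\sum_k \rme^{\rmi k(t-s)}\widehat G^k_{\lambda,1}$, sets up the fixed-point equation for the family $(\widehat G^k_{\lambda,1})_k$, and runs the contraction directly in the weighted pointwise norm
\[
\sup_{k,x,y}\,(1+\Re\lambda+|k|)^{3/2}\,\rme^{\frac12\sqrt{1+\Re\lambda+|k|}\,|x-y|}\,\bigl|\widehat G^k_{\lambda,1}(x;y,s)\bigr|.
\]
Since each $\widehat H^k_{\lambda,0}$ is bounded and lies in $L^1_x$, Young-type convolution estimates apply cleanly mode-by-mode; the coupling between modes coming from $f_u(\bar u)$ is handled by the elementary inequality $1+\Re\lambda+|k|\le(1+|\ell|)(1+\Re\lambda+|k-\ell|)$ together with $\sum_\ell(1+|\ell|)^{1/2}\|a_\ell\|_{L^\infty}<\infty$. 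The contraction constant comes out as $C(1+\Re\lambda)^{-1/2}$ in this norm, so no separate ``first term plus tail'' analysis is needed, and there is never an $|x|^{-1}$ singularity or a logarithm to absorb. The $\varepsilon$ in the statement enters only at the very end, when one trades $(1+\Re\lambda+|k|)^{-3/2}\le(1+\Re\lambda)^{-1/2+\varepsilon}(1+|k|)^{-1-\varepsilon}$ to make the sum over $k$ converge and invoke Sobolev embedding $H^{1/2+\varepsilon/2}(\mathbb{T})\hookrightarrow L^\infty(\mathbb{T})$.

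The genuine soft spot in your outline is the last paragraph: obtaining convergence of the Neumann series in $L^2$ via Parseval does not by itself give the pointwise exponential bound, and ``an $\varepsilon$-loss per iterate'' would not sum. What you would actually need there is exactly the paper's step---a contraction in an exponentially weighted pointwise (or mode-weighted) norm---so you may as well set that up from the start and avoid the detour through $L^2$.
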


\subsection{Estimates on $H_{\lambda,0}$}\label{sec-H0}

Let us first study the problem \eqref{def-Hlambda0}. We prove the following. 

\begin{lemma}\label{lem-H0} The time-periodic solution to \eqref{def-Hlambda0} is of the form 
$$ H_{\lambda,0}(x,t) = \sum_{k\in \mathbb{Z}} \rme^{ik t} \widehat H^k_{\lambda,0}(x)$$
for $(x,t)\in\mathbb{R} \times \mathbb{T}$, where the Fourier coefficients $\widehat H^k_{\lambda,0}(x)$ satisfy 
\begin{equation}\label{bound-H0}  | \widehat H^k_{\lambda,0}(x)| \le C_0 (1+|k|)^{-\frac12} \rme^{-\frac{1}{\sqrt{D_*}}\sqrt{1+|k|}  |x|},\qquad \| \rme^{|x|}  \widehat H^k_{\lambda,0} \|_{L^p(\mathbb{R})} \le C_0(1+|k|)^{-\frac12 (1+\frac1p)}, 
\end{equation}
for $k\in \mathbb{Z}$, $p\ge1$, and for any complex $\lambda$ so that $\Re \lambda \ge -\frac12$ and $|\Im\lambda|\le \frac 12$, where where $D_* = \max\{D_1, \dots, D_n\}$ for $D = \mathrm{diag}(D_1, \dots, D_n)$. 
\end{lemma}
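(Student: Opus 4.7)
The plan is to solve \eqref{def-Hlambda0} explicitly by expanding in Fourier series in time and then diagonalizing in the component index. First, I would expand
\[
\delta(t) = \frac{1}{2\pi}\sum_{k\in\mathbb{Z}} \rme^{\rmi k t}, \qquad H_{\lambda,0}(x,t) = \sum_{k\in\mathbb{Z}} \rme^{\rmi k t}\widehat{H}^k_{\lambda,0}(x),
\]
so that substituting into \eqref{def-Hlambda0} and identifying Fourier coefficients in $t$ reduces the problem to the family of ODEs
\[
\bigl( (1+\lambda+\rmi k) - D\partial_x^2 - c_\mathrm{d} \partial_x \bigr) \widehat{H}^k_{\lambda,0}(x) = \frac{1}{2\pi}\delta(x)\, \mathrm{I}_n.
\]
Since $D=\mathrm{diag}(D_1,\dots,D_n)$, these $n$ equations decouple, and for each diagonal entry $D_j$ I am left with the scalar resolvent equation $D_j h'' + c_\mathrm{d} h' - (1+\lambda+\rmi k)h = -\frac{1}{2\pi}\delta(x)$ on $\mathbb{R}$.

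For each such scalar ODE I would write down the two characteristic roots
\[
\mu^\pm_{j,k}(\lambda) = \frac{-c_\mathrm{d} \pm\sqrt{c_\mathrm{d}^2 + 4D_j(1+\lambda+\rmi k)}}{2D_j},
\]
taking the principal branch of the square root. For $\Re\lambda\geq -\tfrac12$ and $|\Im\lambda|\leq\tfrac12$, the argument of the square root lies in the right half-plane with $\Re \sqrt{c_\mathrm{d}^2+4D_j(1+\lambda+\rmi k)} \gtrsim \sqrt{1+|k|}$, so one root $\mu^+_{j,k}$ has positive real part bounded below by $c\sqrt{1+|k|}/\sqrt{D_j}$ and the other $\mu^-_{j,k}$ has negative real part bounded above by $-c\sqrt{1+|k|}/\sqrt{D_j}$; the universal rate $\frac{1}{\sqrt{D_*}}\sqrt{1+|k|}$ with $D_*=\max_j D_j$ can then be used as a lower bound valid in all components. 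The bounded Green's function for the scalar operator on $\mathbb{R}$ is then the standard two-sided exponential
\[
\frac{1}{2\pi D_j(\mu^+_{j,k}-\mu^-_{j,k})} \begin{cases} \rme^{\mu^-_{j,k}x}, & x\geq 0, \\ \rme^{\mu^+_{j,k}x}, & x\leq 0,\end{cases}
\]
which selects the unique time-periodic solution because no characteristic root lies on the imaginary axis.

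The pointwise bound \eqref{bound-H0} now follows immediately: the prefactor is controlled by
\[
\frac{1}{|D_j(\mu^+_{j,k}-\mu^-_{j,k})|} = \frac{1}{|\sqrt{c_\mathrm{d}^2 + 4D_j(1+\lambda+\rmi k)}|} \lesssim (1+|k|)^{-\tfrac12},
\]
while the exponential factor gives $\rme^{-\frac{1}{\sqrt{D_*}}\sqrt{1+|k|}\,|x|}$ on the appropriate side of the origin. Combining across the $n$ diagonal components yields the first estimate in \eqref{bound-H0}. For the second, weighted $L^p$ bound, I would write
\[
\|\rme^{|x|}\widehat{H}^k_{\lambda,0}\|_{L^p(\mathbb{R})}^p \lesssim (1+|k|)^{-p/2}\int_\mathbb{R} \rme^{-p(\frac{1}{\sqrt{D_*}}\sqrt{1+|k|}-1)|x|}\,\rmd x \lesssim (1+|k|)^{-p/2}\cdot (1+|k|)^{-1/2},
\]
valid for $|k|$ large enough that $\tfrac{1}{\sqrt{D_*}}\sqrt{1+|k|}>1$; for the finitely many remaining $k$ the bound is trivial by compactness in $\lambda$. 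Taking the $p$-th root produces the claimed exponent $-\tfrac12(1+\tfrac{1}{p})$.

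The only technical point requiring care is the uniform separation of $\mu^\pm_{j,k}$ from the imaginary axis for \emph{all} $k\in\mathbb{Z}$ (including small $k$) and all admissible $\lambda$, which is what guarantees both the exponential decay rate and the absence of secular growth in time. For large $|k|$ this is automatic from the asymptotics of $\mu^\pm$; for small $k$ one uses that the spectrum of $-D\partial_x^2-c_\mathrm{d}\partial_x+1+\lambda$ on $L^2(\mathbb{R})$ lies strictly in $\{\Re z\geq \tfrac12\}$ under the hypothesis $\Re\lambda\geq -\tfrac12$, excluding the discrete values $-\rmi k$ from the Fourier-in-time spectrum. This is the step most worth verifying carefully, though it is fundamentally straightforward since the diagonal operator has explicit constant coefficients.
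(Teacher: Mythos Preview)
Your proposal is correct and follows essentially the same approach as the paper: Fourier expansion in time, diagonalization in the component index, explicit computation of the scalar Green's function via the characteristic roots $\mu^\pm_{j,k}$, and the observation that $\pm\Re\mu^\pm_{j,k}\gtrsim\sqrt{1+|k|}/\sqrt{D_j}$ yields both the pointwise and $L^p$ bounds. The paper's proof is slightly terser (it does not separately discuss small versus large $k$, and it suppresses the $1/(2\pi)$ normalization), but the arguments are otherwise identical.
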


\begin{proof} Taking the Fourier transform of \eqref{def-Hlambda0} with respect to time, we get 
\begin{equation}\label{coeff-H00}
(1+\lambda + ik - D\partial_x^2 - c_\mathrm{d}\partial_x ) \widehat H^k_{\lambda,0}(x) =  \delta(x) .\end{equation}
That is, $\widehat H^k_{\lambda,0}(x) $ is simply the Green's kernel of the elliptic operator $1+\lambda + ik - D\partial_x^2 - c_\mathrm{d}\partial_x $, which is diagonal, $\widehat H^k_{\lambda,0}(x) = \mathrm{diag}(\widehat H^{k,1}_{\lambda,0}(x), \dots, \widehat H^{k,n}_{\lambda,0}(x))$, with components given explicitly given by  
\begin{equation}\label{def-H0l} \widehat H^{k,j}_{\lambda,0}(x) = \frac{1}{\sqrt{c_\mathrm{d}^2 + 4 D_j(1+\lambda + ik)}}\left\{ \begin{aligned}
\rme^{\mu_-(\lambda,k) x} ,&\qquad x\ge 0,
\\
\rme^{\mu_+(\lambda,k) x} ,&\qquad x \le 0,
\end{aligned}\right.\end{equation}
for $j = 1, \dots, n$, in which 
\begin{equation}\label{def-mulk}\mu^j_\pm(\lambda,k): = 
\frac{-c_\mathrm{d} \pm \sqrt{c_\mathrm{d}^2 + 4D_j (1+\lambda + ik)}}{2D_j}\end{equation}
and $\{D_j\}$ are the positive diagonal entries of the diffusion matrix: $D = \mathrm{diag}(D_1, \dots, D_n)$.
Observe that for $\Re \lambda \ge -\frac12$ and $|\Im\lambda|\le \frac 12$, 
$$\pm \Re \mu_\pm^j (\lambda , k) \ge \frac{1}{\sqrt{D_j}}\sqrt{1+|k|}$$
for all $j=1, \dots, n$. This and \eqref{def-H0l} prove at once the pointwise bound in \eqref{bound-H0} and hence the $L^p$ estimate. 
\end{proof}

\subsection{Estimates on $H_{\lambda,1}$}\label{sec-H1}
In view of Lemma \ref{bound-H0}, $H_{\lambda,0}(x,\cdot)$ belongs to $H^s(\mathbb{T})$ only for negative number $s$, and thus the right hand side of the resolvent problem \eqref{def-resGstar} is not regular enough to be in $H^{\frac 34}(\mathbb{T})$ as required in the function space $Y$.  For this reason, let us introduce an auxiliary resolvent kernel $H_{\lambda, 1}(x,t;y,s)$, which solves 
\begin{equation}\label{def-Hlambda}\left \{
\begin{aligned}
(1+\lambda + \partial_t - \partial_x^2 - c_\mathrm{d}\partial_x ) H_{\lambda, 1} (x,t;y,s)  &= (1+f_u(\bar{u}(x,t))) H_{\lambda,0}(x-y,t-s), 
\\ H_{\lambda,1}(x,s;y,s) &= 0.
\end{aligned}\right.
\end{equation}
We prove the following. 

\begin{lemma}\label{lem-Hj} For each $(y,s)$, let $H_{\lambda,1}(x,t;y,s)$ solve \eqref{def-Hlambda}. If $f_u(\bar{u})\in L^\infty (\mathbb{R}; H^1(\mathbb{T}))$, then 
\[
H_{\lambda,1} (x,\cdot; y,s)  = \rmO(\rme^{-\frac12|x-y|}),
\]
with $\rmO(\cdot)$ being estimated in $H^{1-\varepsilon}(\mathbb{T})$, for arbitrary $\varepsilon>0$. In particular, the estimate holds in $L^\infty(\mathbb{T})$, by 
Sobolev embedding. 
\end{lemma}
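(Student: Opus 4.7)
The plan is to solve the equation mode-by-mode in a Fourier series in time and to exploit the two-derivative elliptic smoothing in $x$ of each mode, together with the pointwise bounds on $H_{\lambda,0}$ from Lemma \ref{lem-H0}, to obtain the claimed Gaussian decay with the one-derivative gain in time regularity.

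First, I would write the time-periodic functions as Fourier series:
\[
H_{\lambda,1}(x,t;y,s) = \sum_{k\in\mathbb{Z}} \rme^{\rmi k t} \widehat H_1^k(x;y,s), \qquad
H_{\lambda,0}(x-y,t-s) = \sum_{m\in\mathbb{Z}} \rme^{\rmi m(t-s)} \widehat H^m_{\lambda,0}(x-y),
\]
and expand $1+f_u(\bar{u}(x,t)) = \sum_{\ell\in\mathbb{Z}} \rme^{\rmi \ell t} a_\ell(x)$, where the hypothesis $f_u(\bar u)\in L^\infty(\mathbb{R}; H^1(\mathbb{T}))$ gives $\sum_\ell (1+|\ell|)^2 |a_\ell(x)|^2 \leq C$ uniformly in $x$, and in particular $\sum_\ell |a_\ell(x)| \leq C$ by Cauchy--Schwarz. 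Taking the $k$th Fourier coefficient of \eqref{def-Hlambda} then produces the scalar ODE
\[
(1+\lambda + \rmi k - \partial_x^2 - c_\mathrm{d}\partial_x)\widehat H_1^k(x;y,s) = \widehat F^k(x;y,s), \qquad
\widehat F^k(x;y,s) := \sum_{m\in\mathbb{Z}} a_{k-m}(x) \widehat H_{\lambda,0}^m(x-y)\rme^{-\rmi m s}.
\]

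Next, I would solve each ODE by convolution with its 1D Green's function $G^k(x)$, which has exactly the structure of \eqref{def-H0l} with $D=I$; in particular, repeating the computation in Lemma \ref{lem-H0} yields the pointwise bound $|G^k(x)|\lesssim (1+|k|)^{-1/2}\rme^{-c\sqrt{1+|k|}|x|}$ and the $L^1$ bound $\|G^k\|_{L^1}\lesssim (1+|k|)^{-1}$. Using Lemma \ref{lem-H0}, each term of $\widehat F^k$ obeys $|\widehat H_{\lambda,0}^m(z-y)|\lesssim (1+|m|)^{-1/2}\rme^{-\frac12|z-y|}$ (after absorbing the $D_\ast$-dependent constant into a choice of rate no smaller than $1/2$), so
\[
|\widehat F^k(z;y,s)| \leq M^k(z)\,\rme^{-\frac12|z-y|}, \qquad
M^k(z) := \sum_{m\in\mathbb{Z}} |a_{k-m}(z)|(1+|m|)^{-1/2}.
\]
Convolving the two exponentials via the standard estimate $\int \rme^{-a|x-z|}\rme^{-b|z-y|}\,\rmd z \lesssim \min(a,b)^{-1}\rme^{-\min(a,b)|x-y|}$ with $a=c\sqrt{1+|k|}$, $b=\tfrac12$, and using $\|G^k\|_{L^1}\lesssim (1+|k|)^{-1}$, I obtain
\[
|\widehat H_1^k(x;y,s)| \lesssim (1+|k|)^{-1}\,\|M^k\|_{L^\infty}\,\rme^{-\frac12|x-y|}.
\]

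The final step is to sum in $k$ with weight $(1+|k|)^{2(1-\varepsilon)}$ to obtain the $H^{1-\varepsilon}(\mathbb{T})$ norm of $H_{\lambda,1}(x,\cdot;y,s)$. Since $\sum_\ell |a_\ell(x)|\lesssim 1$ uniformly in $x$, Young's convolution inequality on $\mathbb{Z}$ gives $\|M^k\|_{L^\infty} = \|(|a_\cdot|\star h)(k)\|_{L^\infty_x}\lesssim \|h\|_{\ell^q}$ for any $q>2$, where $h_m=(1+|m|)^{-1/2}$, so $\{\|M^k\|_{L^\infty}\}_k\in\ell^q$ for every $q>2$. Combining with the $(1+|k|)^{-1}$ gain from the elliptic operator and applying Hölder with exponents tuned to $\varepsilon$ shows
\[
\sum_{k\in\mathbb{Z}}(1+|k|)^{2(1-\varepsilon)}|\widehat H_1^k(x;y,s)|^2 \lesssim \rme^{-|x-y|}\sum_{k\in\mathbb{Z}}(1+|k|)^{-2\varepsilon}\|M^k\|_{L^\infty}^2 \leq C_\varepsilon \rme^{-|x-y|},
\]
which gives the claim by Sobolev embedding.

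The main obstacle, and where care is required, is item (iv): balancing the time-Fourier convolution produced by the product $(1+f_u(\bar u))H_{\lambda,0}$ with the $k$-dependent decay rate of $G^k$. The reason one cannot ask for the full gain of $H^1(\mathbb{T})$ and must retreat by $\varepsilon$ is precisely the logarithmic/borderline behavior of the $\ell^p$ convolution of $\{|a_\ell|\}$ (only known to be in $\ell^1$) against the $\ell^{2+}$ sequence $(1+|m|)^{-1/2}$; once any positive $\varepsilon$ is allowed in the final Sobolev exponent, Young's inequality closes the estimate.
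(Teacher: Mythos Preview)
Your strategy---expand in temporal Fourier modes, solve each mode by convolution with the explicit kernel of $1+\lambda+\rmi k-\partial_x^2-c_\mathrm{d}\partial_x$, then sum---is precisely the paper's, and your mode-wise estimate $|\widehat H_1^k|\lesssim (1+|k|)^{-1}\|M^k\|_{L^\infty_z}\rme^{-\frac12|x-y|}$ is correct.

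There is, however, a genuine gap in your summation step. Young's inequality applied pointwise in $z$ yields only $\sup_z\|M^\cdot(z)\|_{\ell^q_k}\le C$, and this does \emph{not} imply $\big\|\,\|M^k\|_{L^\infty_z}\big\|_{\ell^q_k}\le C$: the supremum in $z$ may be attained at a different point for each $k$, and the inequality between mixed norms runs the other way. The repair is short: first pass the supremum inside the convolution, $\|M^k\|_{L^\infty_z}\le (A\star h)(k)$ with $A_\ell:=\|a_\ell\|_{L^\infty}$; then observe that the hypothesis $f_u(\bar u)\in L^\infty(\mathbb{R};H^1(\mathbb{T}))$ forces $A_\ell\le C(1+|\ell|)^{-1}$, so $A\in\ell^p$ for every $p>1$, and Young gives $A\star h\in\ell^r$ for every $r>2$, which is exactly what your H\"older argument needs. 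The paper handles the summation slightly differently: it uses the elementary inequality $(1+|k|)^{1/2}\le (1+|\ell|)^{1/2}(1+|k-\ell|)^{1/2}$ to absorb an extra factor $(1+|k|)^{1/2}$ into the convolution sum and arrives directly at the pointwise bound $|\widehat H^k_{\lambda,1}|\le C(1+|k|)^{-3/2}\rme^{-\frac12|x-y|}$, from which membership in $H^{1-\varepsilon}(\mathbb{T})$ is immediate without any H\"older/Young bookkeeping.
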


\begin{proof}
We again construct $H_{\lambda,1}(x,t;y,s)$ via the Fourier series. We write 
$$ H_{\lambda,1}(x,t;y,s) = \sum_{k\in \mathbb{Z}} \rme^{ik (t-s)} \widehat H^k_{\lambda,1}(x;y,s),\qquad f_u(\bar{u}(x,t)) = \sum_{k\in \mathbb{Z}} \rme^{ik t} a_k(x).$$
Note that $\widehat H^k_{\lambda,1}(x;y,s)$ are $n\times n$ complex matrices. It follows that $\widehat H^k_{\lambda,1}(x;y,s)$ solves
\[
(1+\lambda + ik - \partial_x^2 - c_\mathrm{d}\partial_x ) \widehat H^k_{\lambda, 1} (x;y,s)  =  \widehat H^{k}_{\lambda,0}(x-y) + \sum_{\ell\in \mathbb{Z}} \rme^{i\ell s} a_\ell(x) \widehat H^{k-\ell}_{\lambda,0}(x-y). 
\]   
Recall from \eqref{coeff-H00} that $ \widehat H^{k}_{\lambda,0}(x)$ is the Green's kernel of the elliptic operator $1+\lambda + ik - \partial_x^2 - c_\mathrm{d}\partial_x $. Thus, the standard variation-of-constants principle yields 
\[
\widehat H^k_{\lambda, 1} (x;y,s) = \int_\mathbb{R} \widehat H^{k}_{\lambda,0}(x-z) \Big[ \widehat H^{k}_{\lambda,0}(z-y) + \sum_{\ell\in \mathbb{Z}} \rme^{i\ell s} a_\ell(z) \widehat H^{k-\ell}_{\lambda,0}(z-y)\Big] \; dz.
\]
Using the pointwise bound \eqref{bound-H0} on $\widehat H^{k}_{\lambda,0}(x)$, we estimate   
$$\begin{aligned}
| \widehat H^k_{\lambda, 1} (x;y,s)| 
&\le C(1+ |k|)^{-\frac12}   \int_{\mathbb{R}}\rme^{-\sqrt{1+|k|} |x-z|} \Big[ (1+ |k|)^{-\frac12} \rme^{-\sqrt{1+|k|} |z-y|}\\&\qquad +\sum_{\ell \in \mathbb{Z} } \| a_\ell \|_{L^\infty} (1+|k-\ell |)^{-\frac12}\rme^{-\sqrt{1+|k-\ell|} |z-y|}  \Big] \; dz.
  \end{aligned}
 $$
Using the triangle inequality $|x-y|\le |x-z|+|z-y|$ and the fact that $\| e^{-\frac12\sqrt{1+|k|} |\cdot |}\|_{L^1} \le C (1+|k|)^{-\frac12}$, we obtain  
$$\begin{aligned}
| \widehat H^k_{\lambda, 1} (x;y,s)| 
&\le C(1+ |k|)^{-1} \rme^{-\frac12 |x-y|}  \Big[ (1+ |k|)^{-\frac12} +\sum_{\ell \in \mathbb{Z} } \| a_\ell \|_{L^\infty} (1+|k-\ell |)^{-\frac12}  \Big]
\\& \le C (1+|k|)^{-\frac32}  \rme^{-\frac12 |x-y|}  \Big[1+ \sum_{\ell \in \mathbb{Z} }  \| a_\ell \|_{L^\infty} (1+|k-\ell |)^{-\frac12} (1+|k |)^{\frac12} \Big] 
,  \end{aligned}
 $$
which is bounded by $C (1+|k|)^{-\frac32}  \rme^{-\frac12 |x-y|}  $, upon using the fact that $|k|\le (1+|\ell|)(1+|k-\ell|)$ for all $k,\ell \in \mathbb{Z}$ and the assumption $f_u(\bar{u})\in L^\infty (\mathbb{R}; H^1(\mathbb{T}))$, or equivalently the series 
$\sum_{\ell \in \mathbb{Z} } (1+|\ell |)^{\frac12} \| a_\ell \|_{L^\infty}$
is finite. 
\end{proof}

\subsection{Low-frequency bounds}

We are now ready to study the problem \eqref{def-resGstar} for the residual kernel $\widetilde G_{\lambda,1}(x,t;y,s) $ for small and bounded $\lambda$, and give a proof of Proposition \ref{prop-reskernel}. We construct $\widetilde G_{\lambda,1}(x,t;y,s) $ in the form 
\[
\widetilde{G}_{\lambda,1}(x,t;y,s) = H_{\lambda, 1}(x,t;y,s) + G_{\lambda,1}(x,t;y,s),
\]
where $H_{\lambda,1}(x,t;y,s)$ is constructed as in Lemma \ref{lem-Hj} and $G_{\lambda,1}(x,t;y,s)$ satisfies 
\begin{equation}\label{def-resGstar1}
\left\{ 
\begin{aligned}
(\lambda + \partial_t - L) G_{\lambda,1}(x,t;y,s) &=  (1+f_u(\bar{u}(x,t))) H_{\lambda,1}(x,t;y,s) ,
\\
G_{\lambda,1}(x,s;y,s) &= 0.
\end{aligned}\right. \end{equation}
Lemma \ref{lem-Hj} yields that $|H_{\lambda, 1}(x,t;y,s)| \le C e^{-\frac12 |x-y|}$, which contributes into the last term in the estimates for $\widetilde{G}_{\lambda,1}(x,t;y,s)$ as claimed in Proposition \ref{prop-reskernel}. It remains to estimate $G_{\lambda,1}(x,t;y,s)$. 

We first write the equation \eqref{def-resGstar1} in the form of the spatial dynamical system \eqref{eqs-V} with a source term
$$F(z,t;y,s) = \begin{pmatrix} 0\\ -(1+f_u(\bar{u}(z,t))) H_{\lambda,1}(z,t;y,s)\end{pmatrix} 
$$
Since $H_{\lambda,1}(z,\cdot;y,s) \in H^{\frac34}(\mathbb{T}) \cap L^\infty (\mathbb{T})$ by Lemma~\ref{lem-Hj} and $f_u(\bar{u})\in L^\infty (\mathbb{R}; H^1(\mathbb{T}))$ by assumption, the source term is in the function space $Y$; see \eqref{spaceYY}. In particular, 
there holds 
$$\|F(z,\cdot;y,s)\|_{Y} \le C \rme^{-\eta|z-y|}.$$ 
Thus, we can apply Lemma~\ref{lem-PhiSolver}, yielding the variation-of-constants formula 
\begin{equation}\label{def-Glambda1}\begin{aligned}
G_{\lambda,1}(x,t;y,s) &= P_1\int_{-\infty}^x \Phi^\mathrm{s}(x,z,\lambda) F(z,t;y,s) \; dz   - P_1\int_x^\infty \Phi^\mathrm{u} (x,z,\lambda) F(z,t;y,s) \; dz
\end{aligned}\end{equation}
where $P_1$ denotes the projection onto the first $n$ components. Using this representation, we obtain the following lemma, which completes the proof of Proposition \ref{prop-reskernel}.

\begin{lemma}\label{lem-G1} The resolvent kernel $G_{\lambda,1}(x,t;y,s)$ satisfies the same estimates as those
asserted
on 
$\widetilde{G}_{\lambda,1}(x,t;y,s) 
$
in Proposition \ref{prop-reskernel}. 
\end{lemma}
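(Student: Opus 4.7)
The plan is to insert the Laurent expansion of the meromorphic exponential dichotomies $\Phi^\mathrm{s,u}(x,z,\lambda)$ from Proposition~\ref{prop-estPhi} into the variation-of-constants representation~\eqref{def-Glambda1} and then integrate term by term in $z$ against the source $F$. The key analytic input is that Lemma~\ref{lem-Hj}, together with the standing assumption $f_u(\bar{u})\in L^\infty(\mathbb{R},H^1(\mathbb{T}))$, yields $\|F(z,\cdot;y,s)\|_Y\leq C\rme^{-\eta|z-y|}$, which is exactly the exponential localization needed to pair with the decay rates of the $\rmO(\cdot)$ remainders in Proposition~\ref{prop-estPhi}. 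Linearity of \eqref{def-Glambda1} reduces the argument to handling each Laurent term separately.

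Focus on the regime $0\le y\le x$; the others are analogous. Split the stable integral as $\int_{-\infty}^x=\int_{-\infty}^0+\int_0^x$, using the $x\ge 0>z$ expansion of Proposition~\ref{prop-estPhi} on $(-\infty,0]$ and the $x\ge z\ge 0$ expansion on $[0,x]$; treat the unstable piece $\int_x^\infty \Phi^\mathrm{u}(x,z,\lambda)F\,\rmd z$ via the mirror-image expansion announced at the end of that proposition. Across the three subintervals, the leading $1/\lambda$ contributions share the common prefactor $\rme^{\nu_+(\lambda)x}V_j(x)$ (independent of $z$), and the $z$-integrals assemble into
\[
-\frac{1}{\lambda}\sum_{j=1}^{2} \rme^{\nu_+(\lambda)x}\, P_1 V_j(x)\int_\mathbb{R} \langle W_j(z),F(z,\cdot;y,s)\rangle_Y\,\rmd z,
\]
which we identify with the singular term of Proposition~\ref{prop-reskernel} by defining $\tilde\psi_j(y,s)$ to be this pairing. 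The elementary convolution bound $\int_\mathbb{R} \rme^{-\eta|z|}\rme^{-\eta|z-y|}\,\rmd z \lesssim (1+|y|)\rme^{-\eta|y|}\lesssim \rme^{-\eta'|y|}$ for any $\eta'<\eta$, combined with $|W_j(z)|_Y\lesssim \rme^{-\eta|z|}$ and $\|F(z,\cdot;y,s)\|_Y\lesssim\rme^{-\eta|z-y|}$, gives $|\tilde\psi_j(y,s)|\leq C\rme^{-\eta'|y|}$ as required.

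The sub-leading and remainder terms of Proposition~\ref{prop-estPhi} convert to the announced form by routine exponential convolutions: the $\lambda\rme^{-\nu_+(\lambda)z}\rmO(1)$ correction integrated against $F$ produces $\rme^{\nu_+(\lambda)x}V_j(x)\rme^{-\nu_+(\lambda)y}\rmO(1)$ (since $|\nu_+(\lambda)|$ is small near $\lambda=0$); the $\rme^{\nu_+(\lambda)(x-z)}\rmO(|\lambda|+\rme^{-\eta|z|})$ term yields $\rme^{\nu_+(\lambda)(x-y)}\rmO(|\lambda|+\rme^{-\eta|y|})$; the $\rmO(\rme^{-\eta|x-z|})$ term yields $\rmO(\rme^{-\eta|x-y|})$, possibly after shrinking $\eta$ slightly to absorb a polynomial factor from the triangle inequality. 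The $H_{\lambda,1}$ contribution to $\widetilde G_{\lambda,1}=G_{\lambda,1}+H_{\lambda,1}$ is $\rmO(\rme^{-\frac12|x-y|})$ by Lemma~\ref{lem-Hj} and so is absorbed into this last error term. Summing reproduces the expansion of Proposition~\ref{prop-reskernel} in the regime $x\ge y\ge 0$; the regimes $y\le x\le 0$ and $y\le 0\le x$ are handled identically with the other two Laurent pieces of Proposition~\ref{prop-estPhi}, and the cases $x\le y$ follow by the mirror-image argument using $\Phi^\mathrm{u}$.

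The main technical obstacle is the bookkeeping at the matching points $z=0$ and $z=x$: one must verify that the $1/\lambda$ singularities arising from the three subintervals, each governed by a different Laurent expansion, assemble cleanly into a single outer product of the form $V_j(x)\otimes\tilde\psi_j(y,s)$, and that every leftover contribution either carries the factor $V_j(x)$, is multiplied by $\lambda$, or is exponentially localized in $y$ or in $x-y$. This is precisely the structural feature emphasized in the remark following Proposition~\ref{prop-estPhi}; once the algebraic reconciliation is made honestly, the remaining convolution estimates are entirely routine.
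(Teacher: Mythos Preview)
Your proposal is correct and follows essentially the same approach as the paper: insert the Laurent expansion of $\Phi^{\mathrm{s},\mathrm{u}}$ from Proposition~\ref{prop-estPhi} into the variation-of-constants formula~\eqref{def-Glambda1}, use the localization $\|F(z,\cdot;y,s)\|_Y\lesssim\rme^{-\eta|z-y|}$, and treat each Laurent piece by an elementary exponential convolution. The paper organizes the computation by grouping the $\lambda\rme^{-\nu_+(\lambda)z}\rmO(1)$ correction together with the $\rme^{\nu_+(\lambda)(x-z)}\rmO(|\lambda|+\rme^{-\eta|z|})$ term under a single ``$\rme^{\nu_+(\lambda)x}$ term'' heading, whereas you keep them separate, but this is purely cosmetic; the $1/\lambda$ assembly across the three $z$-subintervals and the resulting identification of $\tilde\psi_j(y,s)$ are identical.
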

\begin{proof}
Recall that $G_{\lambda,1}(x,t;y,s)$ satisfies \eqref{def-Glambda1}, which we now estimate, following term by term in the estimates of $\Phi^\mathrm{s,u}(x,z,\lambda)$ obtained from Proposition~\ref{prop-estPhi}. We consider the case when $x\ge 0$; the other case is similar. Recall that 
$$
\begin{aligned}
\Phi^\mathrm{s}(x,y,\lambda) &=  - \frac 1\lambda \sum_{j=1}^2 \rme^{\nu_+(\lambda) x} V_j(x) \Big[ \langle W_j(y) , \cdot \rangle_Y 
+\lambda \rme^{- \nu_+(\lambda)y } \rmO(1)\Big]  \\&\quad  + \rme^{\nu_+(\lambda)(x-y)} \rmO(|\lambda| + \rme^{-\eta |y|})  + \rmO(\rme^{-\eta|x-y|}) 
\end{aligned}$$
for $0\le y \le x$, and 
$$
\begin{aligned}\Phi^\mathrm{s}(x,y,\lambda) &=  - \frac 1\lambda \sum_{j=1}^2 \rme^{\nu_+(\lambda) x} V_j(x)  \langle W_j(y) , \cdot \rangle_Y  + \rme^{\nu_+(\lambda)x} \rmO(\rme^{-\eta |y|})  + \rmO(\rme^{-\eta (|x-y|})
\end{aligned}$$
for $y\le 0\le x$. Similar bounds hold for $\Phi^\mathrm{u}(x,y,\lambda) $.

~\\
{\bf The $\frac 1\lambda $ term.} Combining the $\frac 1\lambda$ terms in $\Phi^\mathrm{s}(x,z,\lambda)$ and $\Phi^\mathrm{u}(x,z,\lambda)$, we get
$$
\begin{aligned}
 - \frac 1\lambda &\sum_{j=1}^2 \rme^{\nu_+(\lambda) x} P_1V_j(x) \int_{-\infty}^\infty \langle W_j(z) , F(z,t;y,s) \rangle_Y \; dz ,
\end{aligned}
$$
in which the integral 
may be
estimated by 
$$ \int_{-\infty}^\infty   \langle W_j(z) , F(z,t;y,s) \rangle_Y \; dz = \int_{-\infty}^\infty \rmO(\rme^{-\eta|z|} \rme^{-\eta|z-y|} ) \; dz = \rmO(\rme^{-\eta |y|}). $$

~\\
{\bf The $ \rme^{\nu_+(\lambda) x}$ term.} Let us consider the terms arriving from $\Phi^\mathrm{s}(x,z,\lambda)$. When $0\le z\le x$, these are 
$$\begin{aligned}
 - &\int_{0}^x \rme^{\nu_+(\lambda) (x-z)}  \Big[ \sum_{j=1}^2V_j(x) \rmO(1) + \rmO(|\lambda| + \rme^{-\eta |z|})\Big] F(z,t;y,s)\; dz 
 \\& = -  \rme^{\nu_+(\lambda) (x-y)}  \int_{0}^x \rme^{\nu_+(\lambda) (y-z)} \rme^{-\eta|z-y|} \Big[ \sum_{j=1}^2V_j(x) \rmO(1) +  \rmO(|\lambda| + \rme^{-\eta |z|})\Big] \; dz 
 \\& = -  \rme^{\nu_+(\lambda) (x-y)}  \int_{0}^\infty \rme^{\nu_+(\lambda) (y-z)} \rme^{-\eta|z-y|} \Big[ \sum_{j=1}^2V_j(x) \rmO(1) +  \rmO(|\lambda| + \rme^{-\eta |z|})\Big] \; dz 
 + \rmO(\rme^{-\eta |x-y|})
 \\& = -  \rme^{\nu_+(\lambda) (x-y)} \Big[ \sum_{j=1}^2V_j(x) \rmO(1) +  \rmO(|\lambda| + \rme^{-\eta |y|})\Big]  + \rmO(\rme^{-\eta |x-y|}),
  \end{aligned}
 $$
 in which the $\rmO(1)$
terms do not depend on $x$.
Similarly, when $z\le 0$, we simply get 
$$\begin{aligned}
 - \int_{-\infty}^0\rme^{\nu_+(\lambda)x} \rmO(\rme^{-\eta |z|}) F(z,t;y,s)\; dz 
&= -  \rme^{\nu_+(\lambda)x}  \int_{-\infty}^0 \rmO(\rme^{-\eta|z|} \rme^{-\eta|z-y|}) \; dz 
\\& = -  \rme^{\nu_+(\lambda)x} \rmO(\rme^{-\eta |y|}).
  \end{aligned}
 $$

~\\
{\bf The $\rme^{-\eta|x-z|}$ term.} Clearly, the integration of these terms against $F(z,t;y,s) = \rmO(\rme^{-\eta |z-y|})$ contributes an $\rmO(\rme^{-\eta |x-y|})$ term.

Similar estimates are obtained for the integral involving $\Phi^\mathrm{u}(x,z,\lambda)$. 
This completes the proof of the lemma.
\end{proof}

\subsection{High frequency bounds}
In this section, we derive estimates on the residual kernel $\widetilde G_{\lambda,1}(x,t;y,s) $ solving \eqref{def-resGstar} for sufficiently large $\Re \lambda$, and give a proof of Proposition \ref{prop-HFreskernel}. Recall that $\widetilde G_{\lambda,1}(x,t;y,s)$ solves 
\begin{equation}\label{eqs-Gl1}
\begin{aligned}
(1+\lambda + \partial_t - \partial_x^2 - c_\mathrm{d}\partial_x ) \widetilde G_{\lambda,1} (x,t;y,s)  
&= (1+f_u(\bar{u}(x,t))) H_{\lambda,0}(x-y,t-s) 
\\&\quad + f_u (\bar{u} (x,t)) \widetilde G_{\lambda,1} (x,t;y,s), 
\end{aligned}
\end{equation}
with $ \widetilde G_{\lambda,1} (x,s;y,s) = 0$. In the Fourier variables, we write 
\begin{equation}\label{def-FGl1} \widetilde G_{\lambda,1}(x,t;y,s) = \sum_{k\in \mathbb{Z}} \rme^{ik (t-s)} \widehat G^k_{\lambda,1}(x;y,s),\qquad f_u(\bar{u}(x,t)) = \sum_{k\in \mathbb{Z}} \rme^{ik t} a_k(x).\end{equation}
The Fourier coefficients then solve 
$$
\begin{aligned} 
(1+\lambda + ik - \partial_x^2 - c_\mathrm{d}\partial_x ) \widehat G^k_{\lambda, 1} (x;y,s)  &=  \widehat H^{k}_{\lambda,0}(x-y) + \sum_{\ell\in \mathbb{Z}} \rme^{i\ell s} a_\ell(x) [\widehat H^{k-\ell}_{\lambda,0} (x-y) + \widehat G^{k-\ell}_{\lambda,1}(x;y,s)],
\end{aligned}
$$
with $ \widehat H^k_{\lambda, 0} (x) $, constructed in Lemma \ref{lem-H0}. Recall from \eqref{coeff-H00} that $ \widehat H^{k}_{\lambda,0}(x)$ is the Green's kernel of the elliptic operator $1+\lambda + ik - \partial_x^2 - c_\mathrm{d}\partial_x $. Thus, the coefficients $\widehat G^k_{\lambda, 1} (x;y,s)$ satisfy the 
variation of constants formula: 
\begin{equation}\label{def-Gkl1} \widehat G^k_{\lambda, 1} (x;y,s) = \int_\mathbb{R}  \widehat H^{k}_{\lambda,0}(x-z) \Big[  \widehat H^{k}_{\lambda,0}(z-y)+ \sum_{\ell\in \mathbb{Z}} \rme^{i\ell s} a_\ell(z)[\widehat H^{k-\ell}_{\lambda,0} (z-y)+ \widehat G^{k-\ell}_{\lambda,1}(z;y,s)]\Big] \; dz \end{equation}
for each $k \in \mathbb{Z}$. We shall solve
for
 $\widehat G^k_{\lambda,1}$ using the contraction mapping theorem. Fix a $\lambda$ with $\Re \lambda \ge 0$ and $-\frac 12 \le \Im \lambda \le \frac 12$. Denote by $\mathcal{V}$ the function space that consists of functions $\widetilde G_{\lambda,1}$ 
of the
form \eqref{def-FGl1} such that the norm 
\[
\| \widetilde G_{\lambda,1}\|_{\mathcal{V}}: = \sup_{k \in \mathbb{Z}; x, y\in \mathbb{R}} (1+|\Re \lambda| + |k|)^{3/2}\rme^{\frac 12\sqrt{1+|\Re \lambda | + |k|}  |x-y|}| \widehat G^k_{\lambda, 1} (x;y,s) |
\]
is finite. Denote by $\mathcal{T}$ the map from $\mathcal{V}$ to itself for which the Fourier coefficients of $\mathcal{T}  \widetilde G_{\lambda,1}$ 
are
defined by the right hand side of \eqref{def-Gkl1}. 

It suffices to show that $\mathcal{T}$ is well-defined and contractive.  
To proceed, let $\widetilde G_{\lambda,1}$ be in $\mathcal{V}$. We first observe that the spatial eigenvalues $\mu_\pm(\lambda, k)$, defined as in \eqref{def-mulk}, satisfy 
$$
\pm \Re \mu_\pm(\lambda,k) = 
\frac{\Re \sqrt{c_\mathrm{d}^2 + 4 (1+\lambda + ik)} \mp c_\mathrm{d}  }{2} \ge \sqrt {1+\Re \lambda + |k|} =:\beta_{\lambda,k},
$$
for $\Re \lambda \ge 0$. 
By inspection 
of \eqref{def-H0l} (see also the estimate \eqref{bound-H0}), we obtain the following pointwise bound: 
\[
| \widehat H^{k}_{\lambda,0} (x)| \le C\beta_{\lambda, k}^{-1}\rme^{-\beta_{\lambda,k} |x|},
\]
for some positive constant $C$. Thus, by definition, we have 
$$\begin{aligned}
| \mathcal{T}  \widetilde G_{\lambda,1}^k(x;y,s)| &\le  \int_\mathbb{R} |\widehat H^{k}_{\lambda,0}(x-z) |\Big[ |\widehat H^{k}_{\lambda,0}(z-y) |+\sum_{\ell \in \mathbb{Z}} |a_\ell(z) [\widehat H^{k-\ell}_{\lambda,0} (z-y) + \widehat G^{k-\ell}_{\lambda,1}(z;y,s)]| \Big] \; dz
\\
&\le C\beta_{\lambda,k}^{-2}\rme^{-\frac 12\beta_{\lambda,k}|x-y|} \int_{\mathbb{R}} \rme^{-\frac 12\beta_{\lambda,k} |z-x|}\; dz  \\& \quad + C\beta_{\lambda,k}^{-1} \rme^{-\frac 12\beta_{\lambda,k} |x-y|}  \int_{\mathbb{R}}  \rme^{-\frac 12\beta_{\lambda, k} |z-x|}\sum_{\ell \in \mathbb{Z} } \| a_\ell \|_{L^\infty}  \beta_{\lambda, k-\ell}^{-1} (1+ \beta_{\lambda, k-\ell}^{-1} ) \Big] \; dz
\\
&\le C\beta_{\lambda,k}^{-3}\rme^{-\frac 12\beta_{\lambda,k}|x-y|} + C\beta_{\lambda,k}^{-2} \rme^{-\frac 12\beta_{\lambda,k} |x-y|}  \sum_{\ell \in \mathbb{Z} } \| a_\ell \|_{L^\infty}  \beta_{\lambda, k-\ell}^{-1} (1+ \beta_{\lambda, k-\ell}^{-1} ) ,
  \end{aligned}
 $$
where the integration in $z$ yields an extra factor $1/\beta_{\lambda,k}$. Next, we note that $\beta_{\lambda,k}\ge \sqrt{1+\Re \lambda}$ and $1+\Re \lambda + |k|\le (1+|\ell|)(1+ \Re \lambda +|k-\ell|)$ for all $k,\ell \in \mathbb{Z}$. Hence, we can estimate 
$$ 
\begin{aligned}
\sum_{\ell \in \mathbb{Z} } \| a_\ell \|_{L^\infty}  \beta_{\lambda, k-\ell}^{-1} (1+ \beta_{\lambda, k-\ell}^{-1} ) 
&\le \beta_{\lambda,k}^{-1} \Big[ 1+(1+\Re \lambda)^{-\frac12}\Big] \sum_{\ell \in \mathbb{Z} } (1+|\ell |)^{\frac12} \| a_\ell \|_{L^\infty}
\\
&\le C\beta_{\lambda,k}^{-1} .
\end{aligned}
$$

Recalling that $\beta_{\lambda,k} = \sqrt {1+\Re \lambda + |k|}$, this proves that 
$$\begin{aligned}
| \mathcal{T}  \widetilde G_{\lambda,1}^k(x;y,s)| &\le C (1+|\Re \lambda| + |k|)^{-3/2}\rme^{-\frac 12\sqrt{1+|\Re \lambda | + |k|}  |x-y|}
.  \end{aligned} $$
Thus, the map $\mathcal{T}$ is well-defined. In addition, for any two elements $\widetilde G_{\lambda,1}$ and $\widetilde H_{\lambda,1}$ in $\mathcal{V}$, similar calculations as above yield at once that
$$\begin{aligned}
| [\mathcal{T}  \widetilde G_{\lambda,1}^k - \mathcal{T}  \widetilde H_{\lambda,1}^k](x;y,s)| &\le  \int_\mathbb{R} |\widehat H^{k}_{\lambda,0}(x-z) |\sum_{\ell \in \mathbb{Z}} |a_\ell(z) [\widehat G^{k-\ell}_{\lambda,1} - \widehat H^{k-\ell}_{\lambda,1}](z;y,s)| \Big] \; dz
\\&\le C\beta_{\lambda,k}^{-2} \rme^{-\frac 12\beta_{\lambda,k} |x-y|}  \sum_{\ell \in \mathbb{Z} } \| a_\ell \|_{L^\infty}  \beta_{\lambda, k-\ell}^{-2} 
\\&\le C(1+\Re \lambda)^{-\frac12}\beta_{\lambda,k}^{-3} \rme^{-\frac 12\beta_{\lambda,k} |x-y|}, \end{aligned}
 $$
using the fact that $\beta_{\lambda,k}\ge \sqrt{1+\Re \lambda}$.  This proves
 $$\| \mathcal{T}\widetilde G_{\lambda,1} - \mathcal{T}\widetilde H_{\lambda,1}\|_{\mathcal{V}} \le C(1+\Re \lambda)^{-\frac12} \|\widetilde G_{\lambda,1} -\widetilde H_{\lambda,1}\|_{\mathcal{V}} .$$
Hence, $\mathcal{T}$ is contractive from the Banach space $(\mathcal{V}, \| \cdot \|_{\mathcal{V}}$) into itself, for sufficiently large $\Re \lambda$. The existence of $\widetilde G_{\lambda,1}$ in $\mathcal{V}$ solving \eqref{eqs-Gl1} follows. 
By 
definition 
of the function space $\mathcal{V}$, the Fourier coefficients in particular satisfy  
$$\begin{aligned}
|\widehat G^k_{\lambda,1}(x;y,s)|
 \le C(1+\Re \lambda)^{-\frac12 + \varepsilon}(1+|k|)^{-1 - \varepsilon} \rme^{-\frac 12 \sqrt{1+\Re \lambda} |x-y|}
  \end{aligned}
 $$
 for arbitrary $x,y\in \mathbb{R}$, $k \in \mathbb{Z}$, and arbitrary $\varepsilon \in (0,\frac 12]$. 
Hence, by definition of $H^{\frac 12 + \frac \varepsilon 2 } (\mathbb{T})$,
$$
\begin{aligned} 
\| \widetilde G_{\lambda,1}(x,\cdot;y,s) \|_{H^{\frac 12 + \frac \varepsilon 2}(\mathbb{T})}^2 &= \sum_{k\in \mathbb{Z}} (1+|k|^2)^{\frac 12 + \frac \varepsilon 2} |\widehat G^k_{\lambda,1}(x;y,s)|^2 
\\
&\le C_\varepsilon (1+\Re \lambda)^{-1 + 2\varepsilon} \rme^{- \sqrt{1+\Re \lambda} |x-y|},
\end{aligned}$$
for each $x,y\in \mathbb{R}$; in particular, by Sobolev embedding, the bound holds in $L^\infty(\mathbb{T})$. 
This completes the proof of Proposition \ref{prop-HFreskernel}.


\section{Green's function bounds}\label{s:Green's}

The main result of this section is to 
establish the pointwise bounds on the Green's function of the linearized problem about a source defect, as stated in Theorem \ref{linthm}.
Recall that the error function $e(x,t)$ and the Gaussian profile
$\theta(x,t)$ are defined as in \eqref{def-epm} and \eqref{Gaussian-like},
with
$$
\theta(x,t) = \sum_\pm \frac{1}{(1+t)^{\frac12}} \rme^{-\frac{(x- c_\pm t)^2}{M_0(t+1)}} .
$$
 

\begin{proof}[Proof of Theorem \ref{linthm}]
By Lemma \ref{lem-introGlambda}, the Green's function $G(x,t;y,s)$ is constructed via the inverse Laplace transform formula
\begin{equation}\label{def-Greenfn}
G(x,t;y,s) = \frac{1}{2\pi\rmi} \int_{\mu-\frac\rmi2}^{\mu + \frac\rmi2} \rme^{\lambda t} \widetilde{G}_{\lambda}(x,t;y,s) \; d\lambda
\end{equation} for an arbitrary constant $\mu>0$, 
in which $\widetilde{G}_{\lambda}(x,t;y,s)$ is the resolvent kernel of \eqref{Green-eqs}. Based on Propositions \ref{prop-reskernel} and \ref{prop-HFreskernel}, we write the resolvent kernel
\begin{equation}\label{res-decompose}\widetilde{G}_{\lambda}(x,t;y,s) = H_{\lambda, 0} (x-y, t-s)  + \widetilde G_{\lambda, 1} (x,t;y,s),\end{equation}
 in which $ H_{\lambda, 0} (x,t)$ is the resolvent kernel for the heat operator $1+\lambda+\partial_t - \partial_x^2 - c_\mathrm{d}\partial_x$. It is straightforward to check that the corresponding temporal Green's function of $1+\partial_t - \partial_x^2 - c_\mathrm{d}\partial_x$ is given explicitly by 
$$ H_0(x,t) = \frac{1}{\sqrt{4\pi t}} \rme^{-\frac{(x + c_\mathrm{d} t)^2}{4t}} \rme^{-t}.$$ 
Observe that for $x\ge 0$, we have $$ \rme^{-\frac{(x + c_\mathrm{d} t)^2}{4t} } \rme^{-t} = \rme^{-\frac{(x - c_+ t + (c_\mathrm{d} + c_+)t)^2}{4t}} \rme^{-t}\le \rme^{-\frac{(x - c_+ t)^2}{Mt} } \rme^{-\frac t2} ,$$
for sufficiently large $M$. Similar bound holds for $x\le 0$. This shows $H_0(x,t) \le C t^{-\frac12} \theta(x,t)$, and so $H_0(x-y,t-s)$ is indeed absorbed into the claimed bound for $G_R(x,t;y,s)$. 

In
view of \eqref{def-Greenfn} and \eqref{res-decompose}, it remains to estimate the following integral 
\begin{equation}\label{def-Greenfn1} G_1(x,t;y,s) := \frac{1}{2\pi\rmi} \int_{\mu-\frac\rmi2}^{\mu + \frac\rmi2} \rme^{\lambda t} \widetilde{G}_{\lambda, 1}(x,t;y,s) \; d\lambda\end{equation}
which corresponds to the remaining resolvent kernel $\widetilde{G}_{\lambda, 1}$. We follow the approach introduced in \cite{ZH}. In what follows, we shall consider the case when $0\le y\le x$; all other cases are similar. The proof uses the low-frequency and high-frequency resolvent bounds on $\widetilde{G}_{\lambda, 1}(x,t;y,s) $ obtained in the previous sections, depending on the relative location of $x, y$ and $t$, divided into two cases: large $\frac{|x-y|}{t}$ and bounded $\frac{|x-y|}{t}$.

\begin{figure}
\centering
\includegraphics[scale = .7]{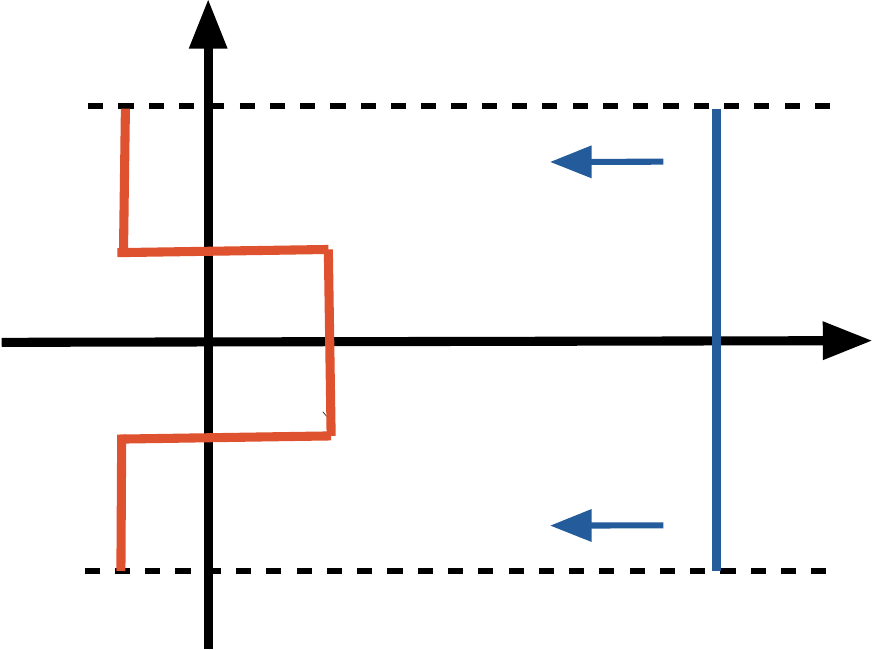}
\put(-170,107){$\frac\rmi2$}
\put(-180,14){$-\frac\rmi2$}
\put(-28,53){$\mu$}
\put(-160,53){$-\varepsilon$}
\put(-105,53){$r$}
\put(-130,85){$\rmi r$}
\put(-137,33){$-\rmi r$}
\put(-20,125){$\mathbb{C}$}
\caption{Illustrated are the initial contour of integration $\Gamma=[\mu-\frac \rmi2, \mu+\frac \rmi2]$ and the shifted contour $\Gamma_1\cup \Gamma_2$, 
in which $\Gamma_1$ consists of two segments with $\Re \lambda = -\varepsilon$ and $\Gamma_2$ denotes the three remaining segments near the origin as depicted.}
\label{fig-contour}
\end{figure}

{\bf Case (i). Large $\frac{|x-y|}{t}$.} 
In this case, we consider $x,y,t$ such that $|x-y|\ge Lt$, for some large $L$ to be determined below. 
We take $\mu$ in the integral \eqref{def-Greenfn} sufficiently large so that the estimates obtained in Proposition \ref{prop-HFreskernel} hold: namely, 
\begin{equation}\begin{aligned}\label{te}
|\widetilde  G_{\lambda,1}(x;y,s)|
 \le C_\varepsilon(1+\mu )^{-\frac12 + \varepsilon} \rme^{-\frac 12\sqrt{1+\mu }  |x-y|}
\end{aligned}
\end{equation}
 for arbitrary $x,y\in \mathbb{R}$, with $\Re \lambda = \mu$. In what follows, we take $\varepsilon =\frac 12$. 
 By a view of \eqref{def-Greenfn1}, we estimate 
\[
\begin{aligned}
 |G_1(x,t;y,s)| 
 &\le C\int_{-\frac 12}^{\frac 12} \rme^{\mu t} \rme^{-\frac 12\sqrt{1+\mu }  |x-y|}  \; d \Im \lambda 
 \le C\rme^{\mu t -\frac 12\sqrt{1+\mu }  |x-y|}.
 \end{aligned}
\]
Choosing $ \sqrt{\mu}  = \frac 14  \frac{|x-y|}{t}$, and noting that by assumption $\mu$ is therefore $\sim L^2$,
hence for $L$ sufficiently large 
here holds resolvent hold \eqref{te}, we obtain
 $|G_1(x,t;y,s)|  \le C   \rme^{-\frac{|x-y|^2}{16t}}.$
The terms $E_j$ evidently obey a similar bound, while $H(x,t;y,s)\leq C(t-s)^{-1} \rme^{-\frac{|x-y|^2}{4(t-s)}}.$
 Combining, we find that
 $$
 G_R= H+G_1 +  \bar{u}_x E_1 + \bar{u}_t E_2
 $$
decays as $C(t-s)^{-1} \rme^{-\frac{|x-y|^2}{16(t-s)}}.$
which is readily seen to be absorbable in the claimed bounds.

{\bf Case (ii). Bounded $\frac{|x-y|}{t}$.}
We now consider the case when $|x-y|\le Lt$, for the large, but fixed, constant $L$ chosen above.
In this case, applying the Cauchy's integral theorem, we move the contour of integration $\Gamma$ into $\Gamma_1 \cup \Gamma_2$ as depicted in Figure \ref{fig-contour}. Here, we take $r$ and $\varepsilon$ sufficiently small so that Proposition \ref{prop-reskernel} holds for $\lambda\in \Gamma_2$.  On $\Gamma_1 = [-\varepsilon + \rmi r, -\varepsilon + \frac \rmi 2] \cup [-\varepsilon -\frac \rmi 2, -\varepsilon - \rmi r]$, the resolvent kernel $\widetilde G_\lambda$ and hence $\widetilde{G}_{\lambda,1} $ are holomorphic, and in particular bounded. Hence, we can estimate 
$$
\begin{aligned}
\Big| \frac{1}{2\pi\rmi} \int_{\Gamma_1} \rme^{\lambda t} \widetilde{G}_{\lambda, 1}(x,t;y,s) \; d\lambda \Big| 
&\le C \rme^{-\varepsilon t}
\le C \rme^{-\frac \varepsilon2 t} \rme^{- \frac{\varepsilon}{2L}|x-y|}
\end{aligned}$$  
which is bounded by $C\rme^{-\eta (|x-y|+t)}$ for some small $\eta$,
hence, for bounded $|x-y|/t$, absorbable into the claimed bounds. 

Next, we estimate the integral on $\Gamma_2$. For $\lambda \in \Gamma_2$, since $r$ is sufficiently small, we can use the representation obtained in Proposition \ref{prop-reskernel}, giving   
\begin{eqnarray*}
\widetilde{G}_{\lambda,1} (x,t;y,s) & = & - \frac 1\lambda \sum_{j=1}^2 \rme^{\nu_+(\lambda) x} P_1 V_j(x) \Big[ \langle \tilde\psi_j(y,s) , \cdot \rangle
+\lambda \rme^{- \nu_+(\lambda)y } \rmO(1)\Big] \\ &&
+ \rme^{\nu_+(\lambda)(x-y)} \rmO(|\lambda| + \rme^{-\eta |y|})  + \rmO(\rme^{-\eta|x-y|}).
\end{eqnarray*}
Such an integral for small $\lambda$ has appeared and been estimated in the literature; see, for instance, the original contribution \cite[Section 8]{ZH} or its applications in other contexts such as \cite{MZ1,BNSZ2}, to name a few. See also \cite[Section 5.1]{BSZ} which deals with the time-periodic setting. As a result, the integral over $\Gamma_2$ of $\frac 1\lambda \rme^{\nu_+(\lambda) x} $, with $\nu_+(\lambda) = -\lambda /c_+ + \rmO(\lambda^2)$, contributes the error function $e(x,t-s)$, and that of $ \rme^{\nu_+(\lambda)(x-y)} $ yields the Gaussian-like behavior $\theta(x-y, t-s)$. We note that both of these integrals are multiplied either by $\bar{u}_x$ or $\bar{u}_t$. The remaining terms are estimated by moving contours appropriately, as in \cite{MZ1,ZH}, and noting that an extra factor of $|\lambda|$ yields an extra time decay of order $(t-s)^{-\frac12}$. For further details we refer the reader to one of the mentioned references.  
\end{proof}


\begin{remark}
In \cite{BSZ}, high-frequency resolvent bounds were not needed, as
$\rme^{-\eta(t+|x-y|)}$ decay in the far field $|x-y|\gg t$ was sufficient for the arguments used there. 
Here, because we are dealing with data decaying at Gaussian
rate, we require a Gaussian decay in our Green's function estimates in order to show that this tail decay is
propagated in time.
\end{remark}

\section{Nonlinear stability}\label{s:stab}

Let $\widetilde u(x,t)$ be the solution to the nonlinear equation \eqref{eqs-RD}, and let $\bar{u}(x,t)$ be the source defect solution. The invariance under space and time translation of the nonlinear equation implies that  $(\partial_t - L) \bar{u}_x = (\partial_t - L)\bar{u}_t = 0$, where $L$ is the linearized operator about $\bar{u}$. We are led to introduce the ansatz \begin{equation}\label{def-v}
v(x,t) := \widetilde{u}(x+\psi(x,t),t+\varphi(x,t))-\bar{u}(x,t)
\end{equation}
which is formally approximated by $\psi \bar{u}_x + \varphi \bar{u}_t$, plus the residual term. Here, the phase shifts $\varphi$ and $\psi$ are to be determined. 

\begin{lemma}\label{cancel}
	The perturbation $v$ defined in \eqref{def-v} satisfies
\begin{equation}\label{eqs-pertubation} (\partial_t - L) (v  - \bar{u}_x \psi - \bar{u}_t \varphi)  = Q(v,\varphi,\psi),\end{equation}
with
\begin{equation}\label{quadratic-bound}
 Q(v,\varphi, \psi) \quad \lesssim \quad 
|v|^2 + \sum_{k=0}^1 \Big[ |D^{k+1}_{x,t}\varphi| +|D^{k+1}_{x,t}\psi|\Big]^2
+ |\mathcal{D}_{x,t}v| \sum_{k=0}^1 \Big[ |D^{k+1}_{x,t}\varphi| +|D^{k+1}_{x,t}\psi| \Big]
 \end{equation}
as long as $v,\mathcal{D}_{x,t}\psi,\mathcal{D}_{x,t}\varphi$ and their derivatives remain bounded and small. 
\end{lemma}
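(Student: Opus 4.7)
The plan is a direct chain-rule computation. Writing $X := x+\psi(x,t)$ and $T := t+\varphi(x,t)$ and differentiating $\tilde v(x,t) := \widetilde{u}(X,T) = \bar u + v$ gives
\[
\tilde v_t = \widetilde{u}_X \psi_t + \widetilde{u}_T(1+\varphi_t), \qquad \tilde v_x = \widetilde{u}_X(1+\psi_x) + \widetilde{u}_T\varphi_x,
\]
together with the analogous expansion of $\tilde v_{xx}$ involving $\widetilde{u}_{XX}, \widetilde{u}_{XT}, \widetilde{u}_{TT}$ and $\psi_{xx}, \varphi_{xx}$ (all $\widetilde{u}$-derivatives evaluated at $(X,T)$). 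Substituting these into $(\partial_t - D\partial_x^2 - c_\mathrm{d}\partial_x)\tilde v$ and eliminating the leading order via the source PDE $\widetilde{u}_T = D\widetilde{u}_{XX} + c_\mathrm{d}\widetilde{u}_X + f(\widetilde{u})$ reduces this to $f(\widetilde{u}(X,T)) + R$, in which every term of $R$ carries at least one factor $\mathcal{D}_{x,t}^{k+1}\psi$ or $\mathcal{D}_{x,t}^{k+1}\varphi$ ($k=0,1$) multiplied by a $\widetilde{u}$-partial of order $\le 2$ at $(X,T)$. Subtracting the defect equation $\bar u_t = D\bar u_{xx} + c_\mathrm{d}\bar u_x + f(\bar u)$ and applying the $C^3$ Taylor expansion $f(\bar u + v) - f(\bar u) - f_u(\bar u) v = \rmO(|v|^2)$ then produces $(\partial_t - L) v = \rmO(|v|^2) + R$.

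Next I would split $R = R^\mathrm{lin} + R^\mathrm{quad}$ by retaining in $R^\mathrm{lin}$ exactly those terms containing a single phase-derivative factor. Direct inspection gives
\[
R^\mathrm{lin} = \widetilde{u}_X(\psi_t - c_\mathrm{d}\psi_x - D\psi_{xx}) + \widetilde{u}_T(\varphi_t - c_\mathrm{d}\varphi_x - D\varphi_{xx}) - 2D\widetilde{u}_{XX}\psi_x - 2D\widetilde{u}_{XT}\varphi_x,
\]
while the Leibniz rule combined with the kernel identity $(\partial_t - L)\bar u_x = (\partial_t - L)\bar u_t = 0$ (stated in the preamble to Lemma~\ref{cancel}) yields
\[
(\partial_t - L)(\bar u_x\psi + \bar u_t\varphi) = \bar u_x(\psi_t - c_\mathrm{d}\psi_x - D\psi_{xx}) + \bar u_t(\varphi_t - c_\mathrm{d}\varphi_x - D\varphi_{xx}) - 2D\bar u_{xx}\psi_x - 2D\bar u_{xt}\varphi_x.
\]
Subtracting these two expressions moves every contribution that is exactly linear in the phase derivatives with $\bar u$-coefficients at $(x,t)$ over to the left-hand side, producing \eqref{eqs-pertubation} with
\[
Q = [f(\bar u+v) - f(\bar u) - f_u(\bar u) v] + R^\mathrm{quad} + [R^\mathrm{lin} - (\partial_t - L)(\bar u_x\psi + \bar u_t\varphi)],
\]
the last bracket being a sum of four terms of the form $[\widetilde{u}_X(X,T) - \bar u_x(x,t)](\psi_t - c_\mathrm{d}\psi_x - D\psi_{xx})$ and its analogues, each a product of a small coefficient discrepancy against a phase derivative.

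For \eqref{quadratic-bound}, the first bracket is $\rmO(|v|^2)$, while the three explicit pieces of $R^\mathrm{quad}$, namely $D\widetilde{u}_{XX}\psi_x^2$, $2D\widetilde{u}_{XT}\psi_x\varphi_x$, $D\widetilde{u}_{TT}\varphi_x^2$, are dominated by $[|\mathcal{D}_{x,t}\psi|+|\mathcal{D}_{x,t}\varphi|]^2$ using uniform $C^2$ boundedness of $\widetilde{u}$ (via Lemma~\ref{lem-defect} and the smallness assumption). For the coefficient-difference corrections I would invert the chain-rule identity $\tilde v_x = \widetilde{u}_X(1+\psi_x) + \widetilde{u}_T\varphi_x$, and its second-order analogue, to express $\widetilde{u}_X(X,T) - \bar u_x(x,t)$ and the higher-order coefficient differences in terms of $v$, $\mathcal{D}_{x,t}v$, and phase derivatives; Cauchy--Schwarz $ab \le \tfrac12(a^2 + b^2)$ then yields the mixed bound $|\mathcal{D}_{x,t}v|\cdot|\mathcal{D}_{x,t}^{k+1}(\psi,\varphi)|$ and the pure quadratic bound $[|\mathcal{D}_{x,t}^{k+1}(\psi,\varphi)|]^2$ claimed in \eqref{quadratic-bound}. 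The main technical obstacle lies in this last step: Taylor expanding $\bar u_x(X,T)$ around $(x,t)$ naturally produces $\bar u_{xx}\psi + \bar u_{xt}\varphi$ terms that are linear in bare $\psi, \varphi$ rather than in their derivatives, and closing the bound requires exploiting the smallness hypothesis so that the implicit constant $\lesssim$ absorbs these lower-order shifts back into the derivative-only right-hand side of \eqref{quadratic-bound}.
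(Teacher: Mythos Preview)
Your approach is correct and is essentially the forward-chain-rule dual of the paper's computation. The paper instead \emph{inverts} the Jacobian, writing
\[
\begin{pmatrix}\partial_\tau\\\partial_\xi\end{pmatrix}
=\Big[I-\begin{pmatrix}\varphi_t&\psi_t\\\varphi_x&\psi_x\end{pmatrix}+Q_3\Big]
\begin{pmatrix}\partial_t\\\partial_x\end{pmatrix},
\]
and applies these operators directly to $\bar u(x,t)$ and $v(x,t)$ separately. Because all coefficients are then already derivatives of $\bar u$ evaluated at $(x,t)$, the paper never has to form the ``coefficient-difference'' brackets $\widetilde u_X(X,T)-\bar u_x(x,t)$ and so avoids your last step entirely. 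Your route produces the same identity after one extra but routine inversion; neither approach is materially harder.

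Your stated ``main technical obstacle'' is a red herring. You correctly propose inverting the relation $\tilde v_x=\widetilde u_X(1+\psi_x)+\widetilde u_T\varphi_x$ together with $\tilde v_t=\widetilde u_X\psi_t+\widetilde u_T(1+\varphi_t)$; doing so gives
\[
\widetilde u_X-\bar u_x \;=\; v_x-\bar u_x\psi_x-\bar u_t\varphi_x+\rmO\big((\mathcal D_{x,t}\psi,\mathcal D_{x,t}\varphi)^2,\;\mathcal D_{x,t}v\cdot\mathcal D_{x,t}(\psi,\varphi)\big),
\]
and similarly for $\widetilde u_T-\bar u_t$, $\widetilde u_{XX}-\bar u_{xx}$, $\widetilde u_{XT}-\bar u_{xt}$. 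These expressions involve only $\mathcal D_{x,t}v$ and derivatives of $\psi,\varphi$ --- \emph{no bare $\psi,\varphi$ appear}. The bare shifts would arise only if you tried to write $\widetilde u_X(X,T)-\bar u_x(x,t)=[\widetilde u_X(X,T)-\bar u_x(X,T)]+[\bar u_x(X,T)-\bar u_x(x,t)]$ and Taylor-expanded the second bracket; but that detour is unnecessary, since the chain-rule inversion already delivers the difference directly in the right variables. Once you drop that worry, your estimate of $Q$ goes through exactly as in the paper.
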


\begin{proof}
	In what follows $Q_j(v,\varphi, \psi)$ denotes different functions satisfying the quadratic bound \eqref{quadratic-bound}. For convenience, we set $\xi: = x + \psi(x,t)$ and $\tau: = t + \varphi(x,t)$. The nonlinear equation \eqref{eqs-RD} now reads 
\begin{equation}\label{eqs-prtb-ugly}
(\partial_\tau  - c_\mathrm{d}\partial_\xi - \partial_\xi^2) (\bar{u}(x,t) + v(x,t)) = f(\bar{u} (x,t)+ v(x,t)).\end{equation} 
To compute the left-hand side, we note that by definition 
$$ \begin{pmatrix} \partial_\tau \\ \partial_\xi \end{pmatrix}  = \frac{1}{d(x,t)} 
\begin{pmatrix} 1+\psi_x & - \psi_t \\ - \varphi_x & 1 + \varphi_t \end{pmatrix}
 \begin{pmatrix} \partial_t \\ \partial_x\end{pmatrix} ,\qquad d(x,t): = (1+\varphi_t )(1+\psi_x) - \varphi_x \psi_t. $$
Since $d =  1+\varphi_t +\psi_x + Q_1$, we have $\frac 1d = 1 - \varphi_t - \psi_x + Q_2$, for $Q_1, Q_2$ 
satisfying \eqref{quadratic-bound}. This yields 
$$ \begin{pmatrix} \partial_\tau \\ \partial_\xi \end{pmatrix}  = \Big[ I  -  \begin{pmatrix} \varphi_t  & \psi_t \\  \varphi_x & \psi_x \end{pmatrix}+ Q_3 \Big] \begin{pmatrix} \partial_t \\ \partial_x\end{pmatrix} .$$

Substituting into \eqref{eqs-prtb-ugly}, we thus obtain
$$(\partial_\tau  - c_\mathrm{d}\partial_\xi - \partial_\xi^2) v = (\partial_t  - c_\mathrm{d}\partial_x - \partial_x^2) v + Q_4(v,\varphi,\psi) $$
where
$Q_4$ satisfies \eqref{quadratic-bound}. 
Similarly, we have 
$$(\partial_\tau  - c_\mathrm{d}\partial_\xi ) \bar{u} =  (\partial_t  - c_\mathrm{d}\partial_x ) \bar{u}  -\bar{u}_t (\partial_t - c_\mathrm{d} \partial_x) \varphi  -\bar{u}_x (\partial_t - c_\mathrm{d} \partial_x) \psi    + Q_5(\varphi,\psi)  $$
and 
$$
\begin{aligned}
\partial_\xi^2 \bar{u}
&= \Big( (1- \psi_x) \partial_x - \varphi_x \partial_t\Big)^2 \bar{u} + Q_6(\varphi,\psi)
\\
&=  \partial_x^2 \bar{u} - \bar{u}_t \partial_x^2 \varphi  - \bar{u}_x \partial_x^2 \psi  - 2\varphi_x\bar{u}_{xt}  - 2\psi_x \bar{u}_{xx}  + Q_7(\varphi,\psi).
\end{aligned}$$
Putting these calculations together into \eqref{eqs-prtb-ugly}, we obtain 
\begin{equation}\label{comp-ugly} \begin{aligned}
0&= (\partial_\tau  - c_\mathrm{d}\partial_\xi - \partial_\xi^2) (\bar{u} + v) - f(\bar{u} + v) 
\\
&=  (\partial_t  - c_\mathrm{d}\partial_x - \partial_x^2) v +  (\partial_t  - c_\mathrm{d}\partial_x - \partial_x^2) \bar{u} 
 -\bar{u}_t (\partial_t - c_\mathrm{d} \partial_x - \partial_x^2) \varphi 
\\&\quad -\bar{u}_x (\partial_t - c_\mathrm{d} \partial_x - \partial_x^2) \psi   
 + 2\varphi_x\bar{u}_{xt}  + 2\psi_x \bar{u}_{xx}  - f(\bar{u} + v) + Q_8(v,\varphi,\psi).
\end{aligned}\end{equation}
Finally, 
using $(\partial_t - L) \bar{u}_x = (\partial_t - L)\bar{u}_t = 0$, with $L = \partial_x^2+ c_\mathrm{d}\partial_x+ f_u(\bar{u})$, we 
compute 
$$
\begin{aligned}
(\partial_t - L) (\bar{u}_x \psi)  &= \bar{u}_x (\partial_t - c_\mathrm{d} \partial_x - \partial_x^2) \psi   
-2\psi_x \bar{u}_{xx} ,
\\
(\partial_t - L) (\bar{u}_t \varphi)  &= \bar{u}_t (\partial_t - c_\mathrm{d} \partial_x - \partial_x^2) \varphi 
- 2\varphi_x\bar{u}_{xt} .
\end{aligned}
$$
Putting these into \eqref{comp-ugly} and using the nonlinear equation for $\bar{u}$, one has
$$\begin{aligned}
0&=  (\partial_t  - L) (v  -\bar{u}_t \varphi  -\bar{u}_x  \psi  ) - f(\bar{u} + v)  + f(\bar{u}) + f_u (\bar{u}) v  + Q_8(v,\varphi,\psi),
\end{aligned}$$
which gives at once the equation \eqref{eqs-pertubation}, with $Q(v,\varphi,\psi) = Q_8(v,\varphi, \psi) + \mathcal{O}(v^2)$. 
\end{proof}

\subsection{Integral representations}\label{s:int}
By view of Lemma \ref{cancel}, the Duhamel principle yields
\begin{eqnarray*}
v(x,t) & = & \bar{u}_x \psi (x,t)+ \bar{u}_t \varphi (x,t)+  \int G(x,t; y,0) (v - \bar{u}_x \psi - \bar{u}_t \varphi)(y,0) \; \rmd y 
\\ &&
+ \int_0^t \int G(x,t;y,s) Q(v,\varphi,\psi)(y,s)  \; \rmd y\rmd s
\end{eqnarray*}
so long as $v$, $\psi$, $\varphi$, and $Q(v,\varphi, \psi)$ remain bounded in $L^2$.  

Our goal is to show that
for an appropriate choice of $\psi$ and $\varphi$, 
$v(x,t)$ tends to zero as $t\to \infty$: more 
precisely, that $v(x,t)$ 
is bounded by the Gaussian profile
$\theta(x,t)$. Exploiting the Green's function decomposition given in Theorem \ref{linthm}, we shall choose the shifts $\varphi, \psi$ so that the non-decaying or slow-decaying terms in the Green's function are cancelled out with $\bar{u}_x \psi + \bar{u}_t \varphi$. Let us analyze each term on the right-hand side of the above integral formula. 
Set
\begin{equation}\label{def-v0}v_0(x): = \widetilde u(x,  0) - \bar{u}(x,0) , \qquad \varphi(x,0) = \psi(x,0) =0.\end{equation}
Recalling the Green's function decomposition \eqref{Gdecomp} obtained in Theorem \ref{linthm}
$$ G(x,t;y,s) =  \bar{u}_x(x,t) E_1(x,t; y,s) + \bar{u}_t(x,t) E_2(x,t; y,s) + G_R(x,t; y,s),
$$
we write
$$\begin{aligned}
 \int &G(x,t; y,0)  v_0 (y) \; \rmd y  
 \\& =  \int [ \bar{u}_x(x,t) E_1(x,t;y,0) + \bar{u}_t(x,t)E_2(x,t;y,0) + G_R(x,t;y,0)]  v_0 (y) \; \rmd y
\end{aligned}$$
and $$\begin{aligned}
\int_0^t \int &G(x,t;y,s) Q(v,\varphi,\psi)  \; \rmd y\rmd s ,
\\ & = \int_0^t \int [ \bar{u}_x(x,t) E_1(x,t;y,s) + \bar{u}_t(x,t)E_2(x,t;y,s) + G_R(x,t;y,s)] Q(v,\varphi,\psi)  \; \rmd y\rmd s.
\end{aligned}$$
Pulling out $\bar{u}_x(x,t)$ and $\bar{u}_t(x,t)$ in the above expression, we are led to introduce 
\begin{equation}\label{def-shifts} \begin{aligned}
 \psi(x,t) : &= -  \int  E_1(x,t;y,0) v_0 (y) \; \rmd y - \int_0^t \int  E_1(x,t;y,s)Q(v,\varphi,\psi) (y,s)  \rmd y\rmd s \\
 \varphi(x,t) : &= -  \int E_2(x,t;y,0) v_0(y)\; \rmd y  - \int_0^t \int  E_2(x,t;y,s) Q(v,\varphi,\psi)  (y,s ) \rmd y\rmd s.
 \end{aligned}\end{equation}
We note that by definition, $E_j(x,0;y,0) =0$ and hence the above setting is consistent with \eqref{def-v0};
indeed, $\psi, \varphi \equiv 0$ for $0\leq t\leq 1$. 
 With \eqref{def-shifts}, the integral representation for $v(x,t)$ then reduces to 
\begin{equation}\label{def-vint01} v(x,t) =  \int G_R(x,t; y,0)  v_0 (y) \; \rmd y +  \int_0^t \int G_R(x,t;y,s) Q(v,\varphi,\psi) (y,s) \; \rmd y\rmd s .\end{equation}



\subsection{Nonlinear iteration}\label{s:it}
Set $b:=(v,\mathcal{D}_{x,t}\psi,\mathcal{D}_{x,t}\varphi)$ and introduce the weighted space-time norm
\[
\|b\|_1
 := \sup_{0 \leq s \leq t, \; y\in \mathbb{R}} \theta^{-1} \sum_{k=0}^1 \Big( 
|v| + s(1+s)^{-1}|\mathcal{D}_{y,s}^kv| 
+ |D^{k+1}_{y,s}\varphi| + |D^{k+1}_{y,s}\psi|  \Big) (y,s) 
\]
where $\theta(x,t)$ denotes the Gaussian profile defined in \eqref{Gaussian-like}. We denote by $\mathcal{B}$ the associated Banach space with the norm $\|\cdot \|_1$. 
We note that the constant $M_0$ in \eqref{Gaussian-like} is a fixed, large, positive number. At various points in the below estimates, there will be a similar quantity, which we denote by $M$, that will need to be taken to be sufficiently large. The number $M_0$ is then the maximum value of $M$, at the end of the proof. 

Observe that the nonlinearity $Q(v,\varphi,\psi)$ introduced in Lemma \ref{cancel} depends only on $b$. Thus, we may define a mapping $\mathcal{T}$ on the Banach space $(\mathcal{B},\|\cdot \|_1)$
given by the righthand side of \eqref{def-vint01} and the space-time derivatives of the righthand sides of
\eqref{def-shifts}. Precisely, for each $b=(v,\mathcal{D}_{x,t}\psi,\mathcal{D}_{x,t}\varphi)$ in $\mathcal{B}$, we set 
\begin{equation}\label{def-TTT} \mathcal{T} = (\mathcal{T}_{v}b , \mathcal{T}_{\psi_{x,t}}b , \mathcal{T}_{\varphi_{x,t}}b)\end{equation}
with 
$$\begin{aligned}
\mathcal{T}_{v}b (x,t)  &=  \int G_R(x,t; y,0)  v_0 (y) \; \rmd y +  \int_0^t \int G_R(x,t;y,s) Q(v,\varphi,\psi) (y,s) \; \rmd y\rmd s
\\	\mathcal{T}_{\psi_{x,t}}b 
	(x,t)  &=  - \int \mathcal{D}_{x,t} E_1(x,t;y,0) v_0(y) \rmd y - \int_0^t \int \mathcal{D}_{x,t} E_1(x,t;y,s)Q(v,\varphi,\psi)   \rmd y\rmd s \\
	\mathcal{T}_{\varphi_{x,t}}b(x,t)  &=  - \int \mathcal{D}_{x,t} E_2(x,t;y,0) v_0(y)  \rmd y - \int_0^t \int \mathcal{D}_{x,t}E_2(x,t;y,s) Q(v,\varphi,\psi)   \rmd y\rmd s.
 \end{aligned}$$

The key issue therefore is to show that this defines a contraction mapping from 
a small ball $B(0,\varepsilon)\subset \mathcal{B}$ to itself, thus establishing at the same time existence and 
the desired time-asymptotic bounds.
This follows readily from the following main estimate.

\begin{proposition} \label{prop-iteration} Let $C_0>0$. There exist positive constants $\varepsilon_0$ and $C_1$ such that
\begin{equation}\label{keyh-ineq}
\begin{aligned}
	\|\mathcal{T}b\|_1 \;\le\; C_1 (\varepsilon  + \|b\|_1^2), 
	\qquad
	\|\mathcal{T}b -\mathcal{T}\widetilde b\|_1 \;\le\; C_1 \max\{\|b\|_1,\|\widetilde b\|_1\} \|b-\widetilde b\|_1
\end{aligned}
\end{equation}
for all initial data $\widetilde u(x,0)$ such that $\varepsilon:=\| \rme^{|\cdot|^2/C_0}(\widetilde u - \bar{u})(\cdot,0) \|_{C^2}\leq\varepsilon_0$
and for all $b,\widetilde b\in \mathcal{B}$ with $\|b\|_1,\|\widetilde b\|_1$ sufficiently small.  
\end{proposition}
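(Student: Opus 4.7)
\smallskip

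\noindent\textbf{Proof plan for Proposition~\ref{prop-iteration}.}
The plan is to estimate each of the three components of $\mathcal{T} b$ pointwise, via the Green's function decomposition of Theorem~\ref{linthm} and the quadratic bound of Lemma~\ref{cancel}, thereby reducing everything to a handful of Gaussian convolution identities. The Lipschitz/contraction estimate on $\mathcal{T} b - \mathcal{T}\widetilde b$ will then follow by an entirely parallel computation after writing $Q(b)-Q(\widetilde b)$ as a sum of differences times factors controlled by $\max\{\|b\|_1,\|\widetilde b\|_1\}$, so I will only indicate the changes needed at the end.

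\smallskip

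\noindent\textbf{Step 1 (Pointwise control of $Q$).}
Using the weights in $\|\cdot\|_1$, each of $|v|$, $|\mathcal{D}\psi|$, $|\mathcal{D}\varphi|$, $|\mathcal{D}^2\psi|$, $|\mathcal{D}^2\varphi|$ is at most $\|b\|_1\,\theta(y,s)$, whereas $|\mathcal{D}v|$ is only bounded by $s^{-1}(1+s)\|b\|_1\theta(y,s)$. Substituting into \eqref{quadratic-bound} and using $\theta(y,s)^2\le C(1+s)^{-\tfrac12}\theta(y,s)$ yields the master pointwise bound
\[
|Q(v,\varphi,\psi)(y,s)|\;\le\;C\|b\|_1^{\,2}\bigl(1 + s^{-1}(1+s)\bigr)(1+s)^{-\tfrac12}\theta(y,s).
\]
The singular factor $s^{-1}$ as $s\to0$ is controllable because $\mathcal{D}\psi,\mathcal{D}\varphi$ vanish at $s=0$ (due to the cut-off $\chi$ in Theorem~\ref{linthm} and the initial condition $\psi(x,0)=\varphi(x,0)=0$), so the offending product $|\mathcal{D}v|\cdot|\mathcal{D}\psi|$ is in fact integrable in $s$ once paired against the smoothing Green's function.

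\smallskip

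\noindent\textbf{Step 2 (Linear terms).}
By hypothesis, $|v_0(y)|+|\mathcal{D}v_0(y)|\le C\varepsilon e^{-y^2/C_0}$. Substituting the pointwise bounds of Theorem~\ref{linthm} for $G_R(x,t;y,0)$, $\mathcal{D}_{x,t}E_1$, and $\mathcal{D}_{x,t}E_2$ into the first integral of each row of $\mathcal{T} b$, the required bound reduces to
\[
\int_{\mathbb{R}}\!\bigl(t^{-\tfrac12}+e^{-\eta|y|}\bigr)\,\theta(x-y,t)\,e^{-y^2/C_0}\,\mathrm{d}y\;\le\;C\,\theta(x,t),
\]
which follows from the standard Gaussian convolution identity, provided the free constant $M_0$ in the definition of $\theta$ is taken large enough relative to $C_0$. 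The error-function piece $e(x-y,t)\beta_j(y)$ in $E_j$ contributes similarly after differentiation, since $\partial_{x,t}e$ is itself Gaussian. This produces the $C_1\varepsilon\theta(x,t)$ portion of \eqref{keyh-ineq}.

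\smallskip

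\noindent\textbf{Step 3 (Nonlinear terms and the master convolution).}
Combining Step~1 with the Theorem~\ref{linthm} bounds for $G_R$, $\mathcal{D}_{x,t}^j G_R$ and $\mathcal{D}_{x,t}^{1+j}G_j$ ($j=0,1$), and using the fact that $\partial_{x,t}(e(x-y,t-s)\beta_j(y))$ has Gaussian decay, every nonlinear integrand reduces to a sum of kernels of the form
\[
(t-s+1)^{j}(t-s)^{-j}\bigl((t-s)^{-\tfrac12}+e^{-\eta|y|}\bigr)\theta(x-y,t-s)
\]
paired with $(1+s^{-1}(1+s))(1+s)^{-\tfrac12}\theta(y,s)$. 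The central estimate to establish is the family of Gaussian convolution bounds
\[
\int_0^t\!\!\int_{\mathbb{R}}\bigl((t-s)^{-\tfrac12}+e^{-\eta|y|}\bigr)\,\theta(x-y,t-s)\,(1+s)^{-\tfrac12}\,\theta(y,s)\,\mathrm{d}y\,\mathrm{d}s\;\le\;C\,\theta(x,t),
\]
whose derivative-weighted versions absorb the $(t-s+1)^j(t-s)^{-j}$ prefactor while multiplying the right-hand side by the admissible loss $(1+t)^{j}t^{-j}$ permitted by the norm. These are proved by the standard splitting into a \emph{convected} region (where the Gaussian centers coincide near $x\approx c_\pm t$) and a \emph{transverse} region, using that convolution of two Gaussians moving at the same speed gives back a single Gaussian of the same Gaussian type, that the temporal smoothing integrals $\int_0^t(t-s)^{-\tfrac12}(1+s)^{-\tfrac12}\mathrm{d}s\le C$ converge, and that the $e^{-\eta|y|}$ factor integrates cleanly against $\theta(y,s)$ to produce an exponentially localized contribution; I will follow the templates of \cite[\S8]{ZH} and \cite[\S5]{BSZ}, adapted to the time-periodic setting. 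Combining, the nonlinear terms contribute the $C_1\|b\|_1^{2}$ part of \eqref{keyh-ineq}.

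\smallskip

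\noindent\textbf{Step 4 (Contraction and the main obstacle).}
For the Lipschitz bound, decompose $Q(b)-Q(\widetilde b)$ into a sum in which each summand has exactly one factor of $b-\widetilde b$ (bounded by $\|b-\widetilde b\|_1\,\theta$) and all other factors bounded by $\max\{\|b\|_1,\|\widetilde b\|_1\}\,\theta$; the same convolution estimates of Step~3 then close the argument. The main technical obstacle is the derivative estimate for $\mathcal{T}_v b$: the $(t-s+1)(t-s)^{-1}$ factor appearing in $|\mathcal{D}_{x,t}G_R|$ is borderline non-integrable near $s=t$ and must be balanced by the Gaussian smoothing $(t-s)^{-\tfrac12}$ inside and by the $(1+s)^{-\tfrac12}$ gain coming from $\theta(y,s)^2$; this is precisely why the \emph{differentiated}-Gaussian structure of $G_R$ (rather than mere Gaussian, as in \cite{BNSZ1,BNSZ2}) is indispensable for closing the iteration at Gaussian rate, as was flagged in the introduction.
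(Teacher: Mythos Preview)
Your proposal is correct and follows essentially the same route as the paper: bound $Q$ pointwise by $\|b\|_1^2$ times a Gaussian weight, reduce everything to the convolution estimates of Lemmas~\ref{lem-conv}--\ref{lem-inconv} (your ``master convolution''), and handle the $s^{-1}$ singularity in $|\mathcal{D}_{x,t}v|$ by observing that it only appears multiplied by $\mathcal{D}_{x,t}\psi,\mathcal{D}_{x,t}\varphi$, which vanish for $s\le 1$ thanks to the cutoff $\chi$ (this is exactly the content of Remark~\ref{layerrmk}). One cosmetic difference: the paper keeps the full $\theta^2(y,s)$ in the convolution integrand and proves the bound by completing the square in the combined exponent (as in \cite{HZ}), whereas you first weaken to $(1+s)^{-1/2}\theta(y,s)$ and then invoke Gaussian-on-Gaussian convolution with a convected/transverse splitting; both succeed, but note that retaining $\theta^2$ makes the cross-speed terms (where the kernel carries $c_+$ and the source $c_-$) slightly easier to absorb uniformly.
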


With this result in hand, we obtain the main results by the Banach contraction mapping theorem.
The remainder of this section is devoted to the proof of Proposition \ref{prop-iteration}. 

In view of the estimates on the nonlinear remainder term $Q(v,\varphi,\psi)$, satisfying the quadratic bound \eqref{quadratic-bound}, and the definition of $\|\cdot\|_1$, we obtain at once 
\begin{equation} \label{non-est} 
Q(v,\varphi,\psi)  \lesssim  \|b\|_1^2 \theta(x,t) ^2. 
\end{equation}
We now use the integral representations \eqref{def-shifts} and \eqref{def-vint01} to prove the iterative estimate \eqref{keyh-ineq}. First, let us recall the following convolution estimates from \cite{HZ}.
\begin{lemma} \label{lem-conv} There is some constant $C$ such that for $t\geq s$, $j=1,2$, $k=0,1$,
\begin{subequations}
\begin{align}
	\int_0^t \int_{\mathbb{R}} |\mathcal{D}_{x,t}^k G_R(x,t;y,s) |\theta^2 (y,s)  \; \rmd y\rmd s  \quad &\leq \quad C\theta(x,t), \label{conv1}
\\
\int_0^t \int_{\mathbb{R}} |\mathcal{D}_{x,t}^{1+k} E_j(x,t;y,s)|\theta^2 (y,s)  \; \rmd y\rmd s   \quad &\leq \quad C\theta(x,t) \label{conv2}
\end{align}
\end{subequations}
for all $t\geq 0$.
(Here as before, $\mathcal{D}_{x,t}$ denotes $(\partial_x,\partial_t)$.) 
\end{lemma}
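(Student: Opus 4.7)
The plan is to reduce both \eqref{conv1} and \eqref{conv2} to standard Gaussian convolution estimates of the type established in \cite{HZ}. For \eqref{conv2}, we first expand $E_j(x,t;y,s) = \chi(t)\bigl(e(x-y,t-s)\beta_j(y) + G_j(x,t;y,s)\bigr)$ and observe that any space or time derivative of the error-function plateau $e$ produces a moving Gaussian of the same shape as $\theta(x-y,t-s)$. Combined with the exponential localization $|\beta_j(y)|\leq C\rme^{-\eta|y|}$ and the derivative bound on $G_j$ supplied by Theorem~\ref{linthm}, this shows that $|\mathcal{D}_{x,t}^{1+k}E_j|$ admits a pointwise majorant of the same form as $|\mathcal{D}_{x,t}^k G_R|$, namely a constant multiple of
\begin{equation*}
(t-s+1)^k(t-s)^{-k}\bigl((t-s)^{-1/2}+\rme^{-\eta|y|}\bigr)\,\theta(x-y,t-s).
\end{equation*}
The cutoff derivative $\chi'(t)$ is supported in $[1,2]$ and contributes a negligible term.

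The core building block is the Gaussian convolution identity
\begin{equation*}
\int_{\mathbb{R}} \theta(x-y,t-s)\,\theta^2(y,s)\,\rmd y \;\lesssim\; (1+s)^{-1/2}\,\theta(x,t),
\end{equation*}
which we verify by completing the square in the Gaussian exponents: the peaks of the two factors, located at $y=x-c_\pm(t-s)$ and $y=c_\pm s$ respectively, align to produce a Gaussian centred at $x=c_\pm t$ with width $\sqrt{1+t}$. Here it is essential that the constant $M_0$ in the definition of $\theta$ be taken sufficiently large so that the resulting Gaussian closes with the same $M_0$ on both sides of the estimate; this is the main point at which $M_0$ is fixed. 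The companion integral involving the factor $\rme^{-\eta|y|}$ localizes $y$ near the origin, where $\theta^2(y,s)$ decays exponentially in $s$ for $s$ large (since its Gaussians propagate with nonzero speeds $c_\pm$), producing a rapidly decaying and therefore harmless time integrand.

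After the spatial convolution, both \eqref{conv1} and \eqref{conv2} reduce to controlling
$\theta(x,t)\int_0^t (t-s+1)^k(t-s)^{-k-1/2}(1+s)^{-1/2}\,\rmd s$ plus an exponentially decaying term. For $k=0$ this is a classical Beta-type integral, bounded uniformly in $t$ by splitting at $s=t/2$. The main obstacle is the case $k=1$, in which the factor $(t-s)^{-3/2}$ near $s=t$ would render the integral divergent if treated naively. The resolution, standard in the Howard--Zumbrun framework, is to transfer the spatial derivative from the Green's function kernel onto the smooth factor $\theta^2$ via integration by parts in $y$, using $\mathcal{D}_x\theta(x-y,t-s) = -\mathcal{D}_y\theta(x-y,t-s)$ modulo symbol corrections that are themselves controlled by $\theta(x-y,t-s)$. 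Since $|\mathcal{D}_y\theta^2(y,s)|\lesssim (1+s)^{-1/2}\theta^2(y,s)$, this trades one power of $(t-s)^{-1}$ for the integrable temporal weight $(1+s)^{-1/2}$, and the time integral again converges. The complete verification of such estimates is carried out in \cite{HZ}, and the analogous arguments in the time-periodic setting developed in \cite{BSZ} apply verbatim; we therefore omit further details.
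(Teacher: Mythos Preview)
Your reduction to Gaussian convolutions by completing the square is exactly the paper's route, and your treatment of \eqref{conv2} (differentiate $e$, use the localization of $\beta_j$, reduce to the $G_R$-type estimate) is essentially the same as the paper's. For $k=0$, both arguments close in the same way.

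There is, however, a genuine gap in your handling of $k=1$. You are right that the short-time factor $(t-s)^{-3/2}$ is the obstacle, but your proposed cure---integration by parts in $y$ using $\mathcal{D}_x\theta(x-y,t-s)=-\mathcal{D}_y\theta(x-y,t-s)$---cannot be applied here: the lemma asks for a bound on $\int\int|\mathcal{D}_{x,t}G_R|\,\theta^2\,\rmd y\,\rmd s$, and once absolute values are inside the integral you cannot integrate by parts. (Integration by parts on the actual kernel would be relevant for bounding $\bigl|\int\int \mathcal{D}_{x,t}G_R\cdot Q\,\rmd y\,\rmd s\bigr|$ in the nonlinear iteration, but that is a different statement.) The paper does not invoke integration by parts at all. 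Instead, in its exponent computation it uses the sharper short-time Gaussian $\exp\bigl(-\frac{(x-y-c_\pm(t-s))^2}{4(t-s)}\bigr)$---with width $\sqrt{t-s}$ rather than the coarser $\sqrt{M_0(1+t-s)}$ coming from $\theta$---so that the $y$-integration itself contributes a factor $\sqrt{\frac{(t-s)(1+s)}{(t-s)+(1+s)}}\sim\sqrt{t-s}$ near $s=t$. It is this extra $\sqrt{t-s}$, not any integration by parts, that tames the short-time singularity; with it the $k=1$ time integral near $s=t$ becomes $\int (t-s)^{-1/2}\,\rmd s$ and converges. In short: to close $k=1$ you need the heat-kernel-type pointwise bound on $\mathcal{D}_{x,t}G_R$ for small $t-s$, not merely the $\theta$-based bound of Theorem~\ref{linthm}.
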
 

\begin{lemma}\label{lem-inconv} There is some constant $C$ such that, for 
$j=1,2$, $k=0,1$, and all $t\geq 0$,
\begin{subequations}
\begin{align}
	\int |\mathcal{D}_{x,t}^kG_R(x,t; y,0) | \rme^{-|y|^2/C_0} \; \rmd y   \quad &\leq \quad C t^{-1}(1+t)\theta(x,t) , 
\label{inconv1}\\
	\int_{\mathbb{R}} |\mathcal{D}_{x,t}^{1+k}E_j(x,t; y,0)| \rme^{-|y|^2/C_0} \; \rmd y  \quad &\leq \quad C\theta(x,t). 
\label{inconv2}
\end{align}
\end{subequations}
\end{lemma}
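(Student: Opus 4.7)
The plan is a direct convolution estimate combining the pointwise bounds of Theorem~\ref{linthm} with the elementary Gaussian convolution identity
\[
\int_\mathbb{R} \rme^{-(x-y-c_\pm t)^2/(M_0(1+t))}\rme^{-y^2/C_0}\,\rmd y
= \sqrt{\tfrac{\pi M_0(1+t)C_0}{M_0(1+t)+C_0}}\,\rme^{-(x-c_\pm t)^2/(M_0(1+t)+C_0)}.
\]
Since $M_0(1+t)+C_0\le(M_0+C_0)(1+t)$ and $M_0(1+t)C_0/(M_0(1+t)+C_0)\le C_0$, the output is bounded by $\sqrt{\pi C_0}$ times a Gaussian of $\theta$-type after harmless enlargement of $M_0$ in \eqref{Gaussian-like}. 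Thus the building block of the proof is $\int\theta(x-y,t)\rme^{-|y|^2/C_0}\,\rmd y\lesssim \theta(x,t)$; when an additional factor $\rme^{-\eta|y|}$ is present, the localization is only strengthened further.

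To prove \eqref{inconv1}, I would specialize Theorem~\ref{linthm} at $s=0$ to obtain
\[
|\mathcal{D}_{x,t}^k G_R(x,t;y,0)|\lesssim (1+t)^k t^{-k}\bigl(t^{-1/2}+\rme^{-\eta|y|}\bigr)\theta(x-y,t),
\]
and then apply the convolution estimate above termwise. The result is $(1+t)^k t^{-k}(1+t^{-1/2})\theta(x,t)\lesssim t^{-1}(1+t)\theta(x,t)$, uniformly for $t>0$ and $k=0,1$.

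For \eqref{inconv2}, I would decompose $E_j=\chi(t)[e(x-y,t)\beta_j(y)+G_j(x,t;y,0)]$. Because $\chi(t)$ vanishes on $[0,1]$, only the regime $t\ge1$ is relevant, and there the factor $(1+t)^k t^{-k}\lesssim 1$; the $G_j$ contribution is handled exactly as in the previous paragraph and yields $\lesssim\theta(x,t)$. For the $e\,\beta_j$ piece, direct differentiation of the error function gives $|\mathcal{D}_{x,t}^{1+k} e(x-y,t)|\lesssim \theta(x-y,t)$ on $t\ge1$, and multiplying by $|\beta_j(y)|\lesssim \rme^{-\eta|y|}$ and integrating against the Gaussian weight again produces a bound of order $\theta(x,t)$, completing \eqref{inconv2}.

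The only genuinely technical point is the bookkeeping of Gaussian widths: the parameter $M_0$ in the definition of $\theta$ must be fixed strictly larger than all Gaussian widths appearing in $G_R$, $G_j$, and the differentiated error function, and also larger than $C_0$, so that each convolution step preserves the $\theta$-bound without successive enlargement. Since these widths are determined once and for all by the linear problem and by the initial-data constant $C_0$, a single choice of $M_0$ suffices uniformly throughout the nonlinear iteration of \S\ref{s:it}, and no further conceptual difficulty arises.
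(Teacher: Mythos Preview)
Your approach is the same as the paper's: the paper's proof is essentially a one-line deferral to \cite{HZ} for the $k=0$ case of \eqref{inconv1}, followed by the remark that $k=1$ carries the extra factor $(1+t)t^{-1}$ from Theorem~\ref{linthm}, and for \eqref{inconv2} the same splitting $E_j=\chi(t)[e\,\beta_j+G_j]$ together with the observation that $\chi\equiv0$ on $[0,1]$.

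There is one genuine slip in your arithmetic, however. The inequality you assert,
\[
(1+t)^k t^{-k}\bigl(1+t^{-1/2}\bigr)\;\lesssim\; t^{-1}(1+t),
\]
is false for $k=1$ as $t\to0^+$: the left side is $\sim t^{-3/2}$ while the right is $\sim t^{-1}$. The reason is that your building block $\int\theta(x-y,t)\rme^{-|y|^2/C_0}\,\rmd y\lesssim\theta(x,t)$ uses the $\theta$-Gaussian of width $M_0(1+t)\sim M_0$, which gives only an $O(1)$ factor from the $y$-integral; you therefore do not recover the extra $t^{1/2}$ needed in the short-time regime. The fix (implicit in the paper's reference to \cite{HZ}) is that for $t\lesssim1$ the kernel $G_R$ is dominated by the constant-coefficient heat piece $H_0(x-y,t)$, whose Gaussian has width $\sim t$ rather than $\sim 1+t$; convolving \emph{that} against $\rme^{-|y|^2/C_0}$ yields the factor $\sqrt{tC_0/(t+C_0)}\sim\sqrt{t}$, which exactly restores the missing power. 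So for the $k=1$ case of \eqref{inconv1} you should split into $t\le1$ (use the sharp heat-kernel width) and $t\ge1$ (where $1+t^{-1/2}\lesssim1$ and your argument goes through verbatim). With this adjustment your proof is complete and agrees with the paper's.
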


The proofs of these estimates follow closely to those 
in \cite{HZ, BNSZ1, BNSZ2}. 

\begin{proof}[Proof of Lemma~\ref{lem-conv}] 


~\\
{\bf Proof of \eqref{conv1}.}
Recall, for $k=0,1$, that
\[
	\mathcal{D}_{x,t}^k G_R (x,t;y,s) \quad \lesssim\quad  (t-s)^{-k/2} ((t-s)^{-\frac12} + \rme^{-\eta |y|}) \theta(x-y,t-s),
\]
where $\theta(x,t)$ is the Gaussian profile
$
\theta(x,t) = \sum_\pm \frac{1}{(1+t)^{\frac12}} \rme^{-\frac{(x+ c_\pm t)^2}{M_0(t+1)}} 
$
(Theorem \ref{linthm}).

We wish to show that \begin{equation}\label{e:d1}
	I: = \theta(x,t)^{-1} \int_0^t \int_\mathbb{R} |\mathcal{D}_{x,t}^k G_R(x,t;y,s)| \theta^2(y,s)  \rmd y \rmd s =   I_1 + I_2
\end{equation}
is bounded for all $t\ge 0$, where $ I_1,I_2$ correspond to the terms involving $(t-s)^{-\frac12}\theta$, and $\rme^{-\eta |y|}\theta$, respectively, in $G_R(x,t;y,s)$. Combining only the exponentials in this expression, we obtain terms that can be bounded by
\begin{equation}\label{exp-form}
\exp\left(\frac{(x+\alpha_3 t)^2}{M(1+t)} - \frac{(x-y+\alpha_1 (t-s))^2}{4(t-s)} - \frac{(y+\alpha_2 s)^2}{M(1+s)} \right)
\end{equation}
with $\alpha_j=c_\pm$. To estimate this  expression, we proceed as in \cite[Lemma~7]{HZ}, and complete the square of the last two exponents in (\ref{exp-form}). Written in a slightly more general form, we obtain
\begin{eqnarray*}
\lefteqn{\frac{(x-y-\alpha_1(t-s))^2}{M_1(t-s)} + \frac{(y-\alpha_2 s)^2}{M_2(1+s)} \;=\;
\frac{(x-\alpha_1(t-s)-\alpha_2 s)^2}{M_1(t-s)+M_2(1+s)} } \\ &&
+ \frac{M_1(t-s)+M_2(1+s)}{M_1M_2(1+s)(t-s)}\left( y - \frac{xM_2(1+s) - (\alpha_1M_2(1+s) + \alpha_2M_1s)(t-s)}{M_1(t-s)+M_2(1+s)}\right)^2
\end{eqnarray*}
and conclude that the exponent in (\ref{exp-form}) is of the form
\begin{eqnarray}\label{exp-form-2}
\lefteqn{ \frac{(x+\alpha_3t)^2}{M(1+t)} - \frac{(x-\alpha_1(t-s)-\alpha_2s)^2}{4(t-s)+M(1+s)} } \\ \nonumber &&
- \frac{4(t-s)+M(1+s)}{4M(1+s)(t-s)}\left( y - \frac{xM(1+s) - (\alpha_1M(1+s) + 4\alpha_2s)(t-s)}{4(t-s)+M(1+s)}\right)^2,
\end{eqnarray}
with $\alpha_j=c_\pm $. Using that the maximum of the quadratic polynomial $\alpha x^2+\beta x+\gamma$ is $-\beta^2/(4\alpha)+\gamma$, 
we
see that the sum of the first two terms in (\ref{exp-form-2}), which involve only $x$ and not $y$, is less than or equal to zero. Omitting this term, we therefore obtain the estimate
\begin{eqnarray*}
\lefteqn{ \exp\left( \frac{(x+\delta_3 t)^2}{M(1+t)} - \frac{(x-y\delta_1(t-s))^2}{4(t-s)} - \frac{(y-\delta_2s)^2}{M(1+s)} \right) } \\ & \leq &
\exp \left( - \frac{4(t-s)+Ms}{4M(1+s)(t-s)}\left( y - \frac{xM(1+s)+(\delta_1M(1+s)+ 4\delta_2s)(t-s)}{4(t-s)+M(1+s)}\right)^2 \right)
\end{eqnarray*}
for $\delta_j=c_\pm$. Using this result, we can now estimate the first term $I_1$ in \eqref{e:d1}. Indeed, we have
\begin{eqnarray*}
	I_1 & \le & C_1 (1+t)^{\frac12}\int_0^t \frac{1}{(t-s) (1+s)} \\ && \times 
\int_\mathbb{R} \exp\left(- \frac{4(t-s)+M(1+s)}{4M(1+s)(t-s)}\left( y - \frac{[xM(1+s) + c_\pm (M(1+s) + 4s)(t-s)]}{4(t-s)+M(1+s)}\right)^2 \right) \rmd y\rmd s \\ & \leq &
	C_1(1+t)^{\frac12}\int_0^t \frac{1}{(t-s)(1+s)}  \sqrt{\frac{4M(1+s)(t-s)}{4(t-s)+M(1+s)}} \rmd s 
\\ & \leq &
C_1(1+t)^{\frac12}\int_0^{t/2} \frac{1}{ t(1+s)^{\frac12}} \rmd s + C_1(1+t)^{\frac12}\int_{t/2}^t \frac{1}{(t-s)^{\frac12} (1+t)}\rmd s \\ & \leq &
C_1.
 \end{eqnarray*}
This proves the estimate for $I_1$. The terms $I_2$ are easier using the fact that the spatial decay $\rme^{-\eta|y|}$ 
mulitplied against a Gaussian profile
yields 
exponential decay in time and space. This proves the first convolution estimate in Lemma~\ref{lem-conv}. 

~\\
{\bf Proof of \eqref{conv2}.}  Recall that 
$$E_j (x,t; y,s) =  e(x,t-s) \beta_j(y) + G_j(x,t; y,s), \qquad j=1,2,$$
where $G_j(x,t; y,s)$ behaves like a Gaussian and $\beta_j(y)$ is localized: $|\beta_j(y)|\le C \rme^{-\eta|y|}$. We observe that $\mathcal{D}_{x,t}G_j(x,t; y,s)$ enjoys the same bounds as $G_R(x,t;y,s)$ and thus, for $k=0,1$,
$$ \int_0^t \int_{\mathbb{R}} |\mathcal{D}_{x,t}^{1+k} G_j(x,t;y,s)|\theta^2 (y,s)  \; \rmd y\rmd s   \quad \leq \quad C\theta(x,t).$$
It remains to check the first term in $E_j(x,t;y,s)$. We note that 
$
\rme^{-\eta|y|} \rme^{-\frac{(y+c_\pm t)^2}{M(1+t)}} \leq C \rme^{-\frac{\eta|y|}{2}} \rme^{-\frac{c^2 t}{M}},
$
hence
 $$\begin{aligned}
  \int_0^t \int_{\mathbb{R}} & |\mathcal{D}_{x,t} e(x,t-s)\beta_j(y)| \theta^2 (y,s)  \; \rmd y\rmd s   
  \\& \le  C \int_0^t \int_{\mathbb{R}} |\mathcal{D}_{x,t} e(x,t-s)| \rme^{-\frac{\eta|y|}{2}} \rme^{-\frac{c^2 t}{M}} \; \rmd y\rmd s  
  \\& \le  C \int_0^t \int_{\mathbb{R}} |\mathcal{D}_{x,t}\Big[ e(x,t) +  \theta(x-y,t-s) \Big] |\rme^{-\frac{\eta|y|}{2}} \rme^{-\frac{c^2 t}{M}} \; \rmd y\rmd s  
  \\& \le  C |\mathcal{D}_{x,t} e(x,t)| +  \int_0^t \int_{\mathbb{R}} |\mathcal{D}_{x,t} \theta(x-y,t-s) \rme^{-\frac{\eta|y|}{2}} \rme^{-\frac{c^2 t}{M}} \; \rmd y\rmd s  
\\&\le C \theta(x,t) .   \end{aligned}$$
Here we have used the facts that $e(x,t-s) \rme^{-\eta(|y|+s)}  = e(x,t) \rme^{-\eta(|y|+s)} $ up to 
a Gaussian-order error
(for instance, see \cite[Proof of Lemma~6.6]{BNSZ2}), and that derivatives of $e(x,t)$ are also of Gaussian order.  
The last integral term involving $\mathcal{D}_{x,t}\theta(x-y,t-s)$ was already treated in the first convolution estimate; see the proof of the $I_1$ estimate in \eqref{e:d1}.  
\end{proof}

\begin{proof}[Proof of Lemma~\ref{lem-inconv}] 
In the case $k=0$, the estimate \eqref{inconv1} can be found, for instance, in the proof of Lemma 3, \cite{HZ}. 
The derivative bound $k=1$ goes similarly, noting the additional factor $t^{-1}(1+t)$ appearing in the derivative of $G_R$.
For the other estimates, we note that 
$$ \mathcal{D}_{x,t}\chi(t)E_j(x,t;y,0) = \mathcal{D}_{x,t}\chi(t)e(x,t) \beta_j(y) + \mathcal{D}_{x,t} \chi(t) G_j(x,t; y,0) .$$
This vanishes for $t\leq 1$, where $\chi\equiv 0$, hence \eqref{inconv2} follows trivially in this case.
For $t\geq 1$, $\mathcal{D}_{x,t}G_j(x,t; y,0)$ enjoys the same bound as 
stated for $G_R(x,t;y,0)$, whereas $\mathcal{D}_{x,t}e(x,t)$ is of Gaussian order. The estimate \eqref{inconv2} thus follows similarly as
in the previous case.
\end{proof}

\begin{proof}[Proof of Proposition~\ref{prop-iteration}]
Using the above convolution estimates, the nonlinear iteration follows trivially. Indeed, by using \eqref{conv1}, \eqref{inconv1}, and the nonlinear estimate \eqref{non-est}, equation \eqref{def-TTT} then implies  
$$ \begin{aligned}
	|\mathcal{T}_v b(x,t)|  &\le  \int |G_R(x,t; y,t)  v_0 (y)|  \; \rmd y +  \int_0^t \int |G_R(x,t;y,s) Q(v,\varphi,\psi) | \; \rmd y\rmd s 
\\
&\lesssim \varepsilon \theta(x,t) +  \|b\|_1^2  \int_0^t \int |G_R(x,t;y,s) | \theta^2(y,s)| \; \rmd y\rmd s 
\\
&\lesssim \Big (\varepsilon +  \|b\|_1^2 \Big)\theta(x,t).
\end{aligned}$$
Similarly, the convolution estimates in Lemmas \ref{lem-conv} and \ref{lem-inconv} yield
 $$ |\mathcal{T}_{\varphi_{x,t}} b | +  |\mathcal{T}_{\psi_{x,t}}b | \lesssim \varepsilon \theta(x,t) + \|b\|_1^2 \theta(x,t) \lesssim (\varepsilon +\|b\|_1^2) \theta(x,t).$$

Estimates for $\mathcal{D}_{x,t}G_R$ and $\mathcal{D}_{x,t}^2 \psi$, $\mathcal{D}_{x,t}^2\varphi$ follow similarly for all terms except the 
	linear term 
$$
\int \mathcal{D}_{x,t} G_R(x,t; y,t)  v_0 (y) \; \rmd y,
$$
which is bounded by $Ct^{-1}(1+t)\theta(x,t)$ instead.
Combining, we obtain \eqref{keyh-ineq}(i).  
The contraction bound \eqref{keyh-ineq}(ii) follows by an essentially identical argument.
\end{proof}

\begin{remark}\label{layerrmk}
Note that the $t^{-1}$ singularity in $\mathcal{D}_{x,t}v$ at $t=0$ plays no role in the iteration, since $\mathcal{D}_{x,t}v$ contributes to $Q$
only where $\mathcal{D}_{x,t}\psi, \mathcal{D}_{x,t}\varphi\not \equiv 0$, hence only on $t\geq 1$, where $\chi \not \equiv 0$.
\end{remark}

\subsection{Recovery of the space and time shifts}\label{s:phase}
Applying the Banach fixed-point theorem, we obtain from Proposition \ref{prop-iteration} existence of a solution
$b=(v,\mathcal{D}_{x,t}\psi,\mathcal{D}_{x,t}\varphi)$ of the system of
integral equations $b=\mathcal{T}b$.
From the derivative estimates afforded by \eqref{keyh-ineq},
we now recover certain information about the time and space shifts $\psi$ and $\varphi$. Proposition \ref{prop-iteration} together with the definition of $\|\cdot\|_1$ yields 
$$\Big(|v| + |\mathcal{D}_{x,t}\varphi| + |\mathcal{D}_{x,t}\psi| + |v_x| + |D^2_{x,t} \varphi| + |D^2_{x,t}\psi| \Big) (x,t) \le C \varepsilon \theta(x,t). $$

\begin{lemma}\label{lem-estshifts} Let $\varphi,\psi$ be defined as in \eqref{def-shifts}. Then, there are smooth functions $\delta_{\psi,\varphi}(t)$ 
converging exponentially in time to constants $\delta_{\psi,\infty}, \delta_{\varphi,\infty}$ 
such that, for $t\geq 0$, $C>0$,
$$ |\psi(x,t) - \delta_\psi(t) e(x,t)| + |\varphi(x,t) - \delta_\varphi(t) e(x,t)| \le C \varepsilon (1+t)^{\frac12}\theta(x,t).$$
\end{lemma}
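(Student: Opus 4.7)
\textbf{Proof proposal for Lemma~\ref{lem-estshifts}.} The plan is to read off the principal part of $\psi$ and $\varphi$ from the definitions \eqref{def-shifts} by using the decomposition $E_j(x,t;y,s)=\chi(t)\bigl[e(x-y,t-s)\beta_j(y)+G_j(x,t;y,s)\bigr]$ from Theorem~\ref{linthm} and then collapsing the $(y,s)$-dependence inside $e(x-y,t-s)$ to $e(x,t)$. Concretely, define
\[
\delta_\psi(t):=-\chi(t)\int_\mathbb{R} \beta_1(y)v_0(y)\,\rmd y-\chi(t)\int_0^t\!\!\int_\mathbb{R}\beta_1(y)Q(v,\varphi,\psi)(y,s)\,\rmd y\,\rmd s,
\]
and $\delta_\varphi(t)$ analogously with $\beta_2$ in place of $\beta_1$. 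Since $\chi$ is smooth and the inner integrals are continuous in $t$, $\delta_{\psi,\varphi}$ are smooth for $t\ge 0$, and the bound $|\delta_{\psi,\varphi}(t)|\le C\varepsilon$ follows from $\|v_0\|\lesssim\varepsilon$, the exponential localization $|\beta_j(y)|\le C\rme^{-\eta|y|}$, and the quadratic bound $|Q|\lesssim\|b\|_1^2\theta^2$ proved in \S\ref{s:it}.

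Subtracting $\delta_\psi(t)e(x,t)$ from the representation of $\psi(x,t)$ produces four remainder terms:
\begin{eqnarray*}
\psi(x,t)-\delta_\psi(t)e(x,t) &=& -\chi(t)\!\int\!\bigl[e(x-y,t)-e(x,t)\bigr]\beta_1(y)v_0(y)\,\rmd y-\chi(t)\!\int\! G_1(x,t;y,0)v_0(y)\,\rmd y \\
&& -\chi(t)\!\int_0^t\!\!\int\!\bigl[e(x-y,t-s)-e(x,t)\bigr]\beta_1(y)Q(y,s)\,\rmd y\,\rmd s \\
&& -\chi(t)\!\int_0^t\!\!\int\! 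G_1(x,t;y,s)Q(y,s)\,\rmd y\,\rmd s.
\end{eqnarray*}
The two $G_1$-integrals are bounded by $C\varepsilon\theta(x,t)$ by the convolution estimates of Lemmas~\ref{lem-conv} and~\ref{lem-inconv}, applied to $G_1$ in place of $G_R$ (the derivative bounds in Theorem~\ref{linthm} give $|G_j|\le C\theta(x-y,t-s)$, with the same structure used there). These contributions are therefore absorbable into $\varepsilon C\theta(x,t)\le \varepsilon C(1+t)^{1/2}\theta(x,t)$.

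The main obstacle is controlling the error-function difference terms. Here I would split
\[
e(x-y,t-s)-e(x,t)=\bigl[e(x-y,t-s)-e(x,t-s)\bigr]+\bigl[e(x,t-s)-e(x,t)\bigr]
\]
and use the pointwise bounds $|\partial_x e(x,\tau)|+|\partial_t e(x,\tau)|\lesssim \tau^{-1/2}\theta(x,\tau)$ that follow by direct differentiation of the error functions in \eqref{def-epm}, giving
\[
|e(x-y,t-s)-e(x,t-s)|\lesssim |y|(t-s)^{-1/2}\bigl[\theta(x,t-s)+\theta(x-y,t-s)\bigr],
\]
and an analogous bound with $s$ replacing $|y|$ for the second difference. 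Crucially, $|y|\beta_1(y)$ is still exponentially localized, so the $y$-integral is essentially frozen at $y\approx 0$. Convolving against $Q\lesssim\|b\|_1^2\theta^2$ using the same completion-of-squares technique as in the proof of Lemma~\ref{lem-conv}, the first difference yields a contribution bounded by $C\varepsilon\theta(x,t)$, while the second contributes a factor $\int_0^t s(t-s)^{-1/2}(1+s)^{-1}\rmd s\lesssim (1+t)^{1/2}$ times $\theta(x,t)$, producing the claimed $\varepsilon(1+t)^{1/2}\theta(x,t)$ bound. The initial-data term is handled similarly using Lemma~\ref{lem-inconv} applied to $[e(x-y,t)-e(x,t)]\beta_1(y)v_0(y)$, which carries no $s$-loss.

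Finally, to obtain the exponential convergence $\delta_\psi(t)\to\delta_\psi^\infty$, note that the integrand $\beta_1(y)Q(y,s)$ satisfies $|\beta_1(y)Q(y,s)|\lesssim \varepsilon^2\rme^{-\eta|y|}\theta(y,s)^2$; since $\theta(y,s)^2$ is peaked at $y\approx c_\pm s$ with $c_-<0<c_+$, the overlap with the localization $\rme^{-\eta|y|}$ forces the spatial integral to decay like $\rme^{-bs}$ for some $b>0$ (and all $s$ large). Hence
\[
\Bigl|\int_t^\infty\!\!\int\beta_1(y)Q(y,s)\,\rmd y\,\rmd s\Bigr|\lesssim\varepsilon^2\rme^{-bt},
\]
which defines $\delta_\psi^\infty:=\lim_{t\to\infty}\delta_\psi(t)$, gives $|\delta_\psi^\infty|\le C\varepsilon$, and yields the stated exponential rate $|\delta_\psi(t)-\delta_\psi^\infty|\le C\varepsilon\rme^{-bt}$. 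The same argument applied with $\beta_2$ handles $\delta_\varphi$, completing the proof.
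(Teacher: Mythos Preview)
Your overall strategy matches the paper's: extract $e(x,t)\delta_\psi(t)$ from the $E_1$-integral and bound the remainder. However, two of your key claims are incorrect, and they happen to cancel in a way that hides the real source of the $(1+t)^{1/2}$ loss.

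First, the $G_1$-integrals are \emph{not} bounded by $C\varepsilon\theta(x,t)$ via Lemmas~\ref{lem-conv}--\ref{lem-inconv}. Those lemmas rely on the bound $|G_R|\lesssim \bigl((t-s)^{-1/2}+\rme^{-\eta|y|}\bigr)\theta(x-y,t-s)$; the extra $(t-s)^{-1/2}$ is precisely what makes the convolution against $\theta^2$ close at order $\theta$. The undifferentiated $G_1$ satisfies only $|G_1|\le C\theta(x-y,t-s)$, so $\int_0^t\!\int G_1\,\theta^2$ picks up a factor $(1+t)^{1/2}$ (rerun the $I_1$ computation in the proof of Lemma~\ref{lem-conv} without the leading $(t-s)^{-1/2}$). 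This is in fact where the $(1+t)^{1/2}$ in the lemma comes from, and it is what the paper's proof records.

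Second, your mean-value treatment of $e(x-y,t-s)-e(x,t)$ has gaps. The derivative bound you state, $|\partial_t e(x,\tau)|\lesssim\tau^{-1/2}\theta(x,\tau)$, is off by a factor $\tau^{1/2}$: direct differentiation gives $|\partial_t e(x,\tau)|\lesssim\theta(x,\tau)$ only. More seriously, the mean-value theorem produces $\theta(x,\tau)$ for some $\tau\in[t-s,t]$, and this is not comparable to $\theta(x,t)$ in general (the Gaussian centers move with $\tau$). The paper avoids this entirely: because $|\beta_1(y)|\,\theta(y,s)^2\lesssim\rme^{-\eta(|y|+s)}$, one has (see \cite[Lemma~6.6]{BNSZ2})
\[
\bigl[e(x,t-s)-e(x,t)\bigr]\rme^{-\eta(|y|+s)}\ \lesssim\ \theta(x-y,t-s)\,\rme^{-\eta(|y|+s)},
\]
and then the convolution against an exponentially localized source gives $C\varepsilon\theta(x,t)$ with \emph{no} $(1+t)^{1/2}$ loss. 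So the picture is reversed from what you wrote: the error-function difference costs nothing once the $s$-localization is used, and the $(1+t)^{1/2}$ comes from $G_1$.
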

\begin{proof} We prove the estimate for $\psi$; the estimate for $\varphi$ is similar. Recall that 
$$\begin{aligned}
 \psi(x,t) &= -  \int  E_1(x,t;y,0) v_0 (y) \; \rmd y - \int_0^t \int  E_1(x,t;y,s)Q(v,\varphi,\psi) (y,s)  \rmd y\rmd s, \\
 \end{aligned}
 $$
with $E_1 (x,t; y,s) =  e(x,t-s) \beta_1(y) + G_1(x,t; y,s).$
Notice that the localization property of $\beta_1(y)$ implies
$$ e(x,t-s) \beta_1(y) \theta(y,s)^2 \quad \lesssim \quad  e(x,t-s) \rme^{-\eta (|y| + s)} = \Big[ e(x,t) + \theta(x-y,t-s) \Big] \rme^{-\eta (|y|+s)}.$$
See, for instance, \cite[Proof of Lemma~6.6]{BNSZ2} for similar estimates. This proves that
$$ \Big[ e(x,t-s) - e(x,t)\Big] \beta_1(y) \theta(y,s)^2 \quad\lesssim\quad \theta(x-y,t-s) \rme^{-\eta (|y|+s)} .$$
We can therefore write 
$$\begin{aligned}
 \psi(x,t)  &= - e(x,t)\Big[ \int \beta_1(y) v_0(y)\; \rmd y + \int_0^t \int  \beta_1(y)Q(v,\varphi,\psi) (y,s) \rmd y\rmd s \Big] 
 \\
 & \quad  -  \int  \widetilde G_1(x,t;y,0) v_0 (y) \; \rmd y - \int_0^t \int  \widetilde G_1(x,t;y,s)Q(v,\varphi,\psi) (y,s)  \rmd y\rmd s
 \end{aligned}$$
where $\widetilde G_1(x,t; y,s)$ enjoys the same bound :s stated for $ G_1(x,t; y,s)$. 
Letting 
$$ \delta_\psi(t): = - \int \beta_1(y) v_0(y)\; \rmd y - \int_0^t \int  \beta_1(y)Q(v,\varphi,\psi) (y,s) \rmd y\rmd s, $$ 
we find that
$\delta_\psi (t)$ converges exponentially to some constant $\delta_{\psi,\infty}$, since $\beta_1(y)$ is localized. That is, the first term in $\psi(x,t)$ converges exponentially in time to a plateau of shape $e(x,t)$. Following the proofs of Lemmas \ref{lem-conv} and \ref{lem-inconv}, 
we can show that 
$$\Big| \int  \widetilde G_1(x,t;y,0) v_0 (y) \; \rmd y + \int_0^t \int  \widetilde G_1(x,t;y,s)Q(v,\varphi,\psi) (y,s)  \rmd y\rmd s\Big| \le C \varepsilon (1+t)^{\frac12} \theta(x,t),$$
upon noting that $|Q(v,\varphi,\psi) (y,s)|\le C\varepsilon \theta^2(y,s)$ and $|v_0(y)|\le C\varepsilon \rme^{-|y|^2/C_0}$. 
\end{proof}

\begin{proof}[Proof of Theorem \ref{main}] The main theorem now follows as a direct consequence of Proposition~\ref{prop-iteration} (precisely, the inequality $\|v,\mathcal{D}_{x,t}\psi, \mathcal{D}_{x,t}\varphi\|_1\leq 2C_1\varepsilon $), and Lemma \ref{lem-estshifts}. 
\end{proof}


\section{Discussion and open problems}

We end by mentioning a few open problems related to our results.

First, one of the main new observations we made in this paper is the realization that the remainder term $G_R$ in the decomposition of the Green's function $G$ into $\bar{u}_x E_1 + \bar{u}_t E_2 + G_R$ behaves like a differentiated Gaussian, instead of just a Gaussian. As mentioned earlier, this is a critical difference from the analysis carried out for the complex Ginzburg--Landau equation in \cite{BNSZ1,BNSZ2}. Besides both simplifying and extending to the general reaction-diffusion case, this observation seems to make possible also the treatment of more general types of defects that arise as solutions of conservation laws. The physical interest here is that the relevant modulation equations are systems of viscous conservation laws rather than scalar equations, so that defects can occur that are analogous to the larger family of possible shocks occurring for systems. The mathematical interest comes from the increased technical difficulty presented by systems of conservation laws as opposed to a scalar equation. Pursuing this would be a very interesting problem for further investigation. As a guide in this direction, we mention the treatment in \cite{JNRZ3} of the case corresponding to localized perturbations of a constant solution of a modulating system of conservation laws, corresponding to far-field behavior of defect type solutions of conservation laws. We note that even the existence of such solutions is an open problem.

A further interesting problem, suggested by \cite{JNRZ1,JNRZ2,JNRZ3}, would be to incorporate next-order modulations of phase and wavenumber to obtain a more detailed description of the phase dynamics, as obtained for the complex Ginzburg--Landau case by different methods in \cite{BNSZ2}.

A different direction is the extension to reaction-diffusion systems with $2r$-parabolic diffusion with $r=1,2,3,\ldots$.  As noted above, our results extend to general parabolic systems with $r=1$. They clearly do not extend as stated to the case $r\geq2$, since spatial decay of solutions is in this case not Gaussian in the far field; however, we conjecture that they hold up to an additional spatially and temporally exponentially decaying error term $\rmO(\rme^{-\eta_1 (x+t)})$ for some $\eta_1>0$.

As a final open problem, we mention the stability of contact defects, which remain as the only generically occurring type not yet treated. As seen in \cite{SS04b}, due to the slow $1/|x|$ convergence of these solutions to their asymptotic states as $x\to \pm \infty$, contact defects present interesting technical challenges already at the level of spectral stability, requiring tools that are different from the non-characteristic case. Their nonlinear stability analysis should be similarly interesting and can likewise be expected to require new tools for its resolution.

\paragraph{Acknowledgments}
The authors gratefully acknowledge partial support by the NSF through the grants DMS-1411460 (Beck), DMS-1405728 (Nguyen), DMS-0907904, DMS-1408742, \& DMS-1714429 (Sandstede), and DMS-0300487 \& DMS-0801745 (Zumbrun).



\bibliographystyle{abbrv}
\bibliography{sources-stability}

\end{document}